\def\pen{\mathrm{pen}}
\def\argmin{\mathrm{argmin}}
\newcommand{\normp}[1]{\ensuremath{\vert\!\vert #1 \vert\!\vert_{\tp}}}
\newcommand{\indic}{1}
\newtheorem{Th}{Theorem}
\newtheorem{Remark}{Remark}
\newtheorem{Def}{Definition}
\newtheorem{Prop}{Proposition}
\newtheorem{Lemma}{Lemma}
\newtheorem{Cor}{Corollary}
\renewenvironment{proof}{\noindent{\bf Proof.}}{\hfill
  $\blacksquare$\par\noindent}
 \newcommand{\com}[1]{}
\newcommand{\norm}[1]{\ensuremath{\vert\!\vert #1 \vert\!\vert}}
\newcommand{\E}{\ensuremath{\mathbb{E}}}
\renewcommand{\P}{\ensuremath{\mathbb{P}}}
\newcommand{\X}{\ensuremath{\mathbb{R}}}
\newcommand{\R}{\ensuremath{\mathbb{R}}}
\newcommand{\Z}{\ensuremath{\mathbb{Z}}}
\renewcommand{\ln}{{\log}}
\renewcommand{\L}{\ensuremath{\mathbb{L}}}
\newcommand{\La}{\ensuremath{\Lambda}}
\newcommand{\Ga}{\ensuremath{\Gamma}}
\newcommand{\ga}{\ensuremath{\gamma}}
\newcommand{\al}{\ensuremath{\alpha}}
\newcommand{\la}{\ensuremath{\lambda}}
\newcommand{\si}{\ensuremath{\sigma}}
\newcommand{\e}{\ensuremath{\varepsilon}}
\newcommand{\p}{\ensuremath{\varphi}} 
\newcommand{\tp}{\ensuremath{{\tilde{\varphi}}}}
\newcommand{\be}{\ensuremath{\beta}}
\newcommand{\tb}{\ensuremath{\tilde{\beta}}}
\newcommand{\hb}{\ensuremath{\hat{\beta}}}
\newcommand{\jk}{\ensuremath{{j,k}}}
\renewcommand{\com}[1]{{\it #1}}
\numberwithin{equation}{section}
\title{Adaptive thresholding estimation of a Poisson intensity with infinite support}
\date{}
\begin{document} 

\thispagestyle{empty}

\noindent{\bf \LARGE Adaptive thresholding estimation of a Poisson intensity}
\thispagestyle{empty}

\vspace{0.2cm}

\noindent{\bf \LARGE with infinite support}

\vspace{0.6cm}

\noindent{\bf   \Large  Patricia  Reynaud-Bouret\footnotemark[1]  and  Vincent
Rivoirard\footnotemark[2]\footnotetext[1]{CNRS and D\'epartement
de Math\'ematiques et Applications, ENS-Paris, 45 Rue d'Ulm, 75230 Paris Cedex
05,     France. Email:    reynaud@dma.ens.fr}\footnotetext[2]{Equipe    Probabilit\'e,
Mod\'elisation et  Statistique, Laboratoire de Math\'ematique,  CNRS UMR 8628,
Universit\'e Paris Sud, 91405 Orsay Cedex, France. D\'epartement
de Math\'ematiques et Applications, ENS-Paris, 45 Rue d'Ulm, 75230 Paris Cedex
05, France. Email: Vincent.Rivoirard@math.u-psud.fr}}

\vspace{0.8cm}

\noindent 
\textbf{Abstract } The purpose of this paper is to estimate the intensity of a Poisson
process $N$ by using thresholding rules. In this paper, the intensity,
defined as the derivative of the mean measure of $N$ with respect to
$ndx$ where $n$ is a fixed parameter, is assumed to be non-compactly
supported. The estimator $\tilde{f}_{n,\gamma}$ based on random
thresholds is proved to achieve the same performance as the oracle
estimator up to a logarithmic term. Oracle inequalities allow to
derive the maxiset of $\tilde{f}_{n,\gamma}$. Then, minimax properties
of $\tilde{f}_{n,\gamma}$ are established. We first prove that the
rate of this estimator  on Besov spaces ${\cal B}^\al_{p,q}$ when
$p\leq 2$ is $(\ln(n)/n)^{\al/(1+2\al)}$. This result has two
consequences. First, it establishes that the minimax rate of Besov
spaces ${\cal B}^\al_{p,q}$ with $p\leq 2$ when non compactly
supported functions are considered is the same as for compactly
supported functions up to a logarithmic term. This result is
new. Furthermore, $\tilde{f}_{n,\gamma}$ is adaptive minimax up to a
logarithmic term. When $p>2$, the situation changes dramatically and
the rate of $\tilde{f}_{n,\gamma}$ on Besov spaces ${\cal
B}^\al_{p,q}$ is worse than $(\ln(n)/n)^{\al/(1+2\al)}$. Finally, the random threshold depends on a parameter $\gamma$ that has to be suitably chosen in practice. Some theoretical results provide upper and lower bounds of $\gamma$ to obtain satisfying oracle inequalities. Simulations reinforce these results.\\

\noindent {\bf Keywords } Adaptive estimation, Model selection, Oracle inequalities, Poisson process, Thresholding rule \\

\noindent {\bf Mathematics Subject Classification (2000)} 62G05 62G20 
%%%%%%%%%%%%%%%%%%%%%%%%%%%%
%%%%%%%%%%%%%%%%%%%%%%%%%%%%
\section{Introduction}
%%%%%%%%%%%%%%%%%%%%%%%%%%%%%%%%%%%%%%%%%%
\subsection{Motivations}
Statistical  inference for  the problem  of estimating  the intensity  of some
Poisson process is considered in this paper. For this purpose, we assume that we
are given  observations of a Poisson  process on  $\R$ and our goal  is to
provide  a data-driven  procedure  with  good performance  for  estimating  the
intensity of this process.  

This problem has already been extensively investigated. For instance, Rudemo \cite{rud} studied data-driven histogram and kernel
estimates  based on the  cross-validation method.  Kernel estimates  were also
studied by Kutoyants \cite{kut} but in a non-adaptive framework. Donoho \cite{don2} fitted the
universal thresholding procedure proposed by Donoho and Johnstone \cite{dojo} for estimating
Poisson intensity by using the Anscombe's transform.  Kolaczyk \cite{kol} refined this
idea by investigating the tails of the distribution of the noisy wavelet
coefficients of the intensity. Still in the wavelet setting, Kim and Koo \cite{kk} studied maximum likelihood type
estimates  on  sieves  for  an  exponential  family of  wavelets.  And  for  a
particular inverse problem, Cavalier and Koo \cite{ck} first derived
optimal estimates in the minimax setting.  More precisely, for their tomographic
problem, Cavalier and Koo \cite{ck} pointed out minimax thresholding rules on Besov balls. By using
model   selection,   other   optimal   estimators  have   been   proposed   by
Reynaud-Bouret \cite{ptrfpois} who obtained oracle type inequalities and minimax rates on a
particular class of Besov spaces. 
In  the more general  setting of  point measure,  let us  mention the  work by
Baraud and Birg\'e \cite{bb}
which deals with histogram selection with the use of Hellinger distance.
 These model selection results have been generalized by Birg\'e
\cite{bir} who applied a general methodology based on $T$-estimators whose performance is measured
by the Hellinger distance.  However, as explained by Birg\'e \cite{bir}, this
 methodology is too computationally intensive to be implemented.
Related
works in other settings are worth citing.  For instance, in Poisson
regression, Kolaczyk and Nowak \cite{kn}
considered penalized maximum likelihood estimates, whereas Antoniadis {\it et
al.} \cite{abs} and Antoniadis and Sapatinas
\cite{as} focused on wavelet shrinkage. 

For our purpose, it is capital  to note that in the previous works, estimation
is performed by assuming that the intensity has in practice a compact support known by the
statistician,  $[0,1]$  in  general. Actually,  procedures  of
previous works are used after preprocessing. The support is indeed assumed to be in $[0,M]$, where $M$ is a known constant given either by some extra-knowledge
concerning the data or by the largest observation. Then, all the
observations are rescaled by dividing by $M$ so that observations belong to $[0,1]$. But all the
previous estimators depend on a tuning parameter, which therefore depends
in practice on $M$. If $M$ is overestimated, the estimation is poor. Even
taking the largest observation can be too rough if the distribution is
heavy-tailed so that the largest observation may be very far away from the
main part of the intensity. These problems become more crucial if one
 deals with data coming from other more complex point processes
(see \cite{gs} or \cite{rbr}) where one knows that the
support is overestimated by the theory and where the classical trick of using the largest observation cannot
be considered. Consequently the assumption of known and bounded support is
not considered in the present paper.

Let  us now  describe more  precisely our  framework. We  begin by  giving the
definition of a Poisson process to fix notations. 
\begin{Def}
Let $N$  be a random  countable subset of  $\R$. $N$ is  said to be  a Poisson
process on $\R$ if
\begin{itemize}
\item[-] for all $A\subset\R$,  the number of points of $N$ lying  in $A$ is a
random  variable, denoted  $N_A$, which  obeys  a Poisson  law with  parameter
denoted by $\mu(A)$ where $\mu$ is a measure on~$\R$,
\item[-]   for  all   finite   family  of   disjoints  sets   $A_1,\dots,A_n$,
$N_{A_1},\dots,N_{A_n}$ are independent. 
\end{itemize}
\end{Def}
The measure $\mu$, called the mean measure  of $N$, is assumed to be finite to
obtain almost surely a finite set of points for $N$. We denote
by $dN$ the discrete random measure $\sum_{T\in N}\delta_T$ so we have for any function $g$, 
$$\int g(x) dN_x=\sum_{T\in N}g(T).$$ We assume that the
mean measure is absolutely continuous with respect to the Lebesgue measure and
for $n$, a fixed integer, we denote by $f$ the intensity function
of $N$ defined by
$$\forall\;x\in\R,\quad f(x)=\frac{\mu(dx)}{ndx}.$$
We are  interested in estimating $f$  knowing the almost surely  finite set of
points $N$.  The parameter $n$ is introduced to derive results in an asymptotic
setting where $f$ is held fixed and $n$ goes to $+\infty$. Furthermore, note that observing
the $n$-sample  of Poisson  processes $(N_1,\dots,N_n)$ with  common intensity
$f$  with  respect  to the  Lebesgue  measure  is  equivalent to  observe  the
cumulative Poisson process $N=\cup_{i=A}^nN_i$ with intensity $n\times f$ with
respect to the Lebesgue measure. And in addition, this setting is close to the
problem  of density  estimation where  we  observe a  $n$-sample with  density
$f/\int f(x)dx$. 

Our goal is to build constructive data-driven estimators of $f$
and  for this  purpose,  we consider  thresholding  rules whose  risk is 
measured under the $\L_2$-loss.  Our  framework is the following. Of course, $f$ is  non-negative and since we assume that
$\mu(\R)<\infty$, this  implies that $f\in\L_1$. Since
we consider the $\L_2$-loss, $f$ is assumed to be in
$\L_2$.   In particular,  $f$ is not  assumed  to be  bounded  (except  in the  minimax  setting)  and, as  said
previously, its support may be infinite. 

In a different setting, the problem  of estimating a density  with infinite support  has been
partly solved from the minimax point of view.  See \cite{bh} where minimax results for a class of functions depending on a jauge are established or \cite{ik} and
\cite{gol} for Sobolev classes. In these papers, the loss function depends on the
parameters of the functional class. Similarly, Donoho {\it et al.} \cite{djkp} proved the
optimality of  wavelet linear estimators on Besov  spaces ${\cal B}^\al_{p,q}$
when  the $\L_p$-risk  is considered.  First  general results  where the  loss
is independent of  the functional class have been pointed  out by Juditsky and
Lambert-Lacroix \cite{jll} who
investigated minimax rates on the particular class of the Besov spaces ${\cal B}^\al_{\infty,\infty}$ for the $\L_p$-risk. When
$p>2+1/\al$, the minimax risk is bounded by $(\ln(n)/n) ^{2\al/(1+2\al)}$ so is of the same order up to a logarithmic term as
in the equivalent estimation problem on $[0,1]$. However, the behavior of the
minimax  risk changes dramatically  when $p\leq  2+1/\al$, and  in this  case, it
depends on $p$. In addition, Juditsky and
Lambert-Lacroix \cite{jll} pointed out a data-driven thresholding
procedure  achieving minimax rates  up to  a logarithmic  term. In the maxiset
setting, this procedure  has been studied by Autin  \cite{aut} and compared to
other classical thresholding procedures. Finally, we can also mention
that Bunea {\it et al.} \cite{btw} established oracle inequalities without any support assumption by
using Lasso-type estimators. 
%%%%%%%%%%%%%%%%%%%%%%%%%%%%%%%%%%%%%%%%%%
\subsection{The estimation procedure}
Now, let us describe the estimation procedure considered in our paper. For this purpose, we assume
in the following that
the function $f$ can be written as follows:
\begin{equation}\label{decom}
f=\sum_{\la\in\Lambda}\be_\la\tilde\p_\la,\quad \mbox{with }\be_\la=\int f(x)\p_\la(x)dx\end{equation}
where $(\tilde\p_\la)_{\la\in\Lambda}$ and $(\p_\la)_{\la\in\Lambda}$
are two infinite families of linearly independent functions of
$\L_2$.    Most    of    the  further  results       are   valid    by    taking
$(\tilde\p_\la)_{\la\in\Lambda}=(\p_\la)_{\la\in\Lambda}$ to  be an orthonormal
basis of  $\L_2$ (the Haar basis  for instance). However,  minimax results
are established by considering special cases of biorthogonal wavelet bases
and      in      this      case      $(\tilde\p_\la)_{\la\in\Lambda}$      and
$(\p_\la)_{\la\in\Lambda}$ are different (see Section \ref{biorthogonal}). We note
$$\normp{f}=\left(\sum_{\la\in\Lambda}\be_\la^2\right)^{1/2}$$
which     is     equal    to     the    $\L_2$-norm     of     $f$     if
$(\tilde\p_\la)_{\la\in\Lambda}$ is orthonormal.
We  consider  thresholding  estimators based  on  observations
$(\hat \be_\la)_{\la\in\Gamma_n}$, where $\Gamma_n$ is a subset of $\Lambda$ chosen
later and
$$\forall\; \la\in \La,\quad \hb_\la= \frac1n\int_\R\p_\la(x) dN_x.$$
Observe that $\forall\; \la\in \La$,  $\hb_\la$ is an unbiased estimator of
$\be_\la$. As Juditsky and
Lambert-Lacroix \cite{jll}, we threshold $\hb_\la$ according to a random positive function 
of  $\la$ depending on $n$ and on a fixed  parameter $\gamma$  fixed later,  denoted by
$\eta_{\la,\gamma}$ and the thresholding estimator of $f$ is
\begin{equation}\label{defest}
\tilde{f}_{n,\gamma}=\sum_{\la \in \Ga_n} \tb_\la \tp_\la,
\end{equation}
where
$$\forall\; \la\in \La, \quad \tb_\la=\hb_\la \indic_{|\hb_\la|\geq\eta_{\la,\gamma}}.$$
In the sequel, we denote $\tilde f_\gamma=(\tilde{f}_{n,\gamma})_n$.
\\\\
The procedure (\ref{defest}) can also be seen as a model selection procedure. 
Indeed, for all $g=\sum_{\la\in\La} \al_\la\tp_\la$, we define
the least square contrast by
$$\ga_n(g)=-2\sum_{\la\in\La} \al_\la\hb_\la+\sum_{\la\in\La}\al_\la^2.$$
For all subset of indices $m$, we denote by $S_m$ the subspace generated by
$\{\tp_\la,\la\in m\}$. The projection estimator onto $S_m$ is defined by 
$$\hat{f}_m=\arg\min_{g\in S_m} \ga_n(g) = \sum_{\la \in m} \hb_\la \tp_\la.$$
Note that $$\ga_n(\hat{f}_m)=-\sum_{\la \in m} \hb_\la^2.$$
If we set 
$$\pen(m)=\sum_{\la\in m} \eta_{\la,\gamma}^2,$$
then  the  thresholding  estimator  can  be seen  as  a  penalized  projection
estimator since we have
\begin{equation*}
\tilde{f}_{n,\gamma}=\hat{f}_{\hat{m}}=\sum_{\la\in\Ga_n}\hb_\la\indic_{|\hb_\la|\geq\eta_{\la,\gamma}}\tp_\la\nonumber 
\end{equation*}
with
\begin{equation}\label{etoile2}
\hat{m}=\arg\min_{m\subset \Ga_n}
\left[\ga_n(\hat{f}_m)+\pen(m)\right].
\end{equation}
Such an interpretation is used  in Section \ref{oracle} and for the proof
of the main result of this paper. 
%%%%%%%%%%%%%%%%%%%%%%%%%%%%%%%%%%%%%%%%%%
\subsection{Overview of the paper}
In  this paper,  our goals  are threefold.  First of  all, we  wish  to derive
theoretical  results for  the $\L_2$-risk  of $\tilde{f}_{\gamma}$  by using
three different points of view (oracle,  maxiset and minimax), then we wish to
discuss precisely the  choice of the threshold and finally  we wish to perform
some simulations. 

Let us now describe our results for our first aim. Theorem \ref{inegoracle}  is the main result  of the paper.  With a convenient
choice of the threshold and under very mild assumptions on $\Gamma_n$, Theorem
\ref{inegoracle} proves that  the thresholding estimate $\tilde{f}_{\gamma}$
satisfies an  oracle type inequality. We  emphasize that this  result is valid
under very mild assumptions on $f$.  Indeed, classical procedures use a bound for the sup-norm of $f$ (see \cite{ck}, \cite{djkp} or \cite{ptrfpois}).  This is not the case here where the threshold is the sum of two terms, a purely random
one that is the main term  and a deterministic one (see (\ref{defseuil})). The
definition of  the threshold  is extensively discussed  in Section  \ref{main}. By
using biorthogonal wavelet bases,  we derive from Theorem \ref{inegoracle} the
oracle inequality satisfied by $\tilde{f}_{\gamma}$. More precisely, Theorem
\ref{inegoraclelavraie}    in   Section    \ref{oracle}     shows   that
$\tilde{f}_{\gamma}$ achieves  the same performance as  the oracle estimator
up  to a logarithmic  term which  is the  price to  pay for  adaptation. From
Theorem \ref{inegoraclelavraie}, we derive the maxiset results of this paper. Let us recall that the maxiset
approach consists in investigating the  maximal space ({\it maxiset}), where
a  given procedure  achieves  a given  rate  of convergence.  For the  maxiset
theory, there is no a priori functional assumption. For a given procedure, the
practitioner states the  desired accuracy by fixing a rate  and points out all
the functions that can be estimated at this rate by the procedure.  Obviously,
the  larger  the  maxiset, the  better  the  procedure.  We prove  in  Section
\ref{maxiset},  that  under  mild  conditions,  the maxiset  of  the  estimate
 $\tilde{f}_{\gamma}$ for classical rates of the form $(\ln(n)/n)^{\al/(1+2\al)}$ is, roughly speaking, the intersection of
two spaces: a weak Besov space denoted $W_\al$ and the classical Besov space
 ${\cal B}^\al_{2,\infty}$ (see Theorem
\ref{maxisets}    and
Section  \ref{maxiset} for  more details).  Interestingly, this  maxiset result
provides examples of non bounded  functions that can be estimated at the
rate $(\ln(n)/n)^{\al/(1+2\al)}$ when $0<\al<1/4$ (see Proposition \ref{contreexlinfini}). Furthermore, we derive from the
maxiset result most of the minimax results briefly described now. 

As said previously, Juditsky and Lambert-Lacroix \cite{jll} established minimax rates for the problem of
estimating  a density with  an infinite  support for  the particular  class of
Besov spaces ${\cal B}^\al_{\infty,\infty}$ and for the $\L_p$-loss. To the best of our knowledge, minimax rates are unknown for Besov spaces ${\cal B}^\al_{p,q}$ except for very special cases described above. Our goal is to deal with this issue in the Poisson setting and for the $\L_2$-loss. We emphasize that for the minimax setting, we assume that the function to be estimated is bounded. The results that we obtain are the following. When $p\leq 2$, under mild assumptions, the minimax rate of convergence associated with ${\cal B}^\al_{p,q}$ is the classical rate $n^{-\al/(1+2\al)}$ up to a logarithmic term. So, it is of the same order
as in the  equivalent estimation problem on compact  sets of $\R$. Furthermore,
our estimate  achieves this rate up  to a logarithmic term.  When $p>2$, using
our maxiset result, we prove that this last result concerning our procedure is
no  more  true.  But  we  prove   under  mild  conditions  that  the  rate  of
$\tilde{f}_{\gamma}$ is not  larger than $(\ln(n)/n)^{\al/(2+2\al-1/p)}$ up to
a constant. Note that when $p=\infty$, $(\ln(n)/n)^{\al/(2+2\al)}$ is the rate
pointed out by Juditsky and Lambert-Lacroix \cite{jll} for minimax estimation under the $\L_2$-loss on the 
space  ${\cal  B}^\al_{\infty,\infty}$. Of  course,  when compactly  supported
functions are  considered, $\tilde{f}_{\gamma}$  is adaptive minimax  on Besov
spaces ${\cal B}^\al_{p,q}$ up to a logarithmic term. 

The  second goal  of  the paper  is to  discuss  the  choice of  the
threshold.  The starting point of this discussion is as follows. The main term of the threshold is
$(2\gamma\ln(n)  \tilde{V}_{\la,n})^{1/2}$  where $\tilde{V}_{\la,n}$  is  an
estimate of  the variance of  $\hat\be_\la$ and $\gamma$  is a constant  to be
calibrated (see (\ref{defseuil}) for further details). As usual, $\gamma$ has to be large enough to
obtain the theoretical results (see Theorem \ref{inegoracle}).  Such an assumption is very classical (see for
instance   \cite{jll},  \cite{djkp},   \cite{ck}  or   \cite{aut}).   But,  as
illustrated by Juditsky and
Lambert-Lacroix \cite{jll}, it is often too conservative for practical issues. In this paper, the assumption on the constant $\gamma$ is as
less conservative  as possible and actually most of the results are valid if
$\gamma>1$. So, the first issue is the following: what happens if $\gamma\leq 1$? 
Theorem \ref{lower} of  Section \ref{penaltyterm} proves that the rate obtained
for   estimating  the   simple  function   $\indic_{[0,1]}$  is   larger  than
$n^{-(\gamma+\e)/2}$ for any  $\e>0$. This proves that $\gamma<1$  is a bad choice
since,  with $\gamma>1$,  we  achieve the  parametric  rate up  to a  logarithmic
term.  Finally we  consider a special  class of  intensity
functions denoted ${\cal F}_n$. Theorems
\ref{classFn}  and \ref{uppth}   provide  upper  and lower  bounds of  the
maximal ratio  on ${\cal F}_n$  of the risk  of $\tilde f_{\gamma}$  by the
oracle risk and prove that $\gamma$ should not be too large.  

Finally we validate the previous range of $\gamma$
and refine it through a simulation study so that one can claim that $\gamma=1$
is a fairly good choice for all the encountered situations (finite/infinite
support, bounded/unbounded intensity, smooth/non-smooth functions).
%%%%%%%%%%%%%%%%%%%%%%%%%%%%%%%%%%%%%%%%%%
\subsection{Outlines}
The paper is organized as follows. In Section \ref{main}, the main result of this paper
is  established.   Then,  Section \ref{biorthogonal}  introduces  biorthogonal
wavelet bases that are used to give oracle, maxiset and minimax results pointed out in
Section \ref{omm}.  Section \ref{penaltyterm}  discusses  the  choice of  the
threshold,  whereas  Section  \ref{simu}  provides some  simulations.  Finally,
Section \ref{proofs} gives the proof of the theoretical results.
%%%%%%%%%%%%%%%%%%%%%%%%%%%%%%%%%%%%%%%%%%
%%%%%%%%%%%%%%%%%%%%%%%%%%%%%%%%%%%%%%%%%%
\section{The main result}\label{main}
In the  sequel, for $R>0$, if ${\cal  F}$ is a  given Banach space,  we denote
${\cal  F}(R)$ the  ball of  radius $R$  associated with  ${\cal F}$.  For any
$1\leq p\leq\infty$, we denote
$$\norm{g}_p=\left(\int    |g(x)|^pdx\right)^{\frac{1}{p}}$$with the usual modification for $p=\infty$. 
To state the  main result, let us introduce the  following notations that are
 used throughout the paper. We set
$$
\forall\;\la\in\Lambda,\quad\hat{V}_{\la,n}=\int_\X\frac{\p_\la^2(x) }{n^2} dN_x,$$
the natural estimate of $V_{\la,n}$ that is the variance of $\hat\be_\la$:
$$\forall\;\la\in\Lambda,\quad
V_{\la,n}=\E(\hat{V}_{\la,n})=\frac{\si_\la^2}{n},$$
where $$\forall\;\la\in\Lambda,\quad\si_\la^2=\int_\X\p_\la^2(x)f(x)dx.$$
\begin{Th}
\label{inegoracle}
We assume that (\ref{decom}) is true and $\Gamma_n$ is such that for
$\la\in\Gamma_n$, $$\norm{\p_\la}_\infty  \leq c_{\p,n} \sqrt{n}$$  and that for
all $x\in \X$, 
\begin{equation}\label{mphi}
\mbox{card}\{ \la \in \Ga_n:\quad\p_\la(x)\not=0\}\leq m_{\p,n} \ln n, 
\end{equation}
where $c_{\p,n}$ and $ m_{\p,n}$ depend on $n$ and on the family $(\p_\la)_{\la\in\Lambda}$. 
 Let $\ga >1$. We set
\begin{equation}\label {defseuil}
\eta_{\la,\gamma}=\sqrt{2\gamma\ln n \tilde{V}_{\la,n} }+\frac{\gamma\ln
    n}{3n}\norm{\p_\la}_\infty,
\end{equation}
where    $$\tilde{V}_{\la,n}=\hat{V}_{\la,n}+\sqrt{2\gamma     \ln    n    \hat{V}_{\la,n}
\frac{\norm{\p_\la}_\infty^2}{n^2}}+3                \gamma                \ln
n\frac{\norm{\p_\la}_\infty^2}{n^2}$$
and consider $\tilde f_{n,\gamma}$ defined in (\ref{defest}).
Then for all $\e<\gamma-1$ and for all $p\geq 2$ and $q$ such that $\frac{1}{p}+\frac{1}{q}=1$ and $\frac{\gamma}{q}>1+\e$,
\begin{multline*}
\frac{\e}{2+\e}\E(\normp{\tilde{f}_{n,\gamma}-f}^2)\leq \E\left[\inf_{m \subset
    \Ga_n}\left\{\left(1+\frac{2}{\e}\right)\sum_{\la \not\in m}\be_\la^2 +\e\sum_{\la\in
    m}(\hb_\la-\be_\la)^2+\sum_{\la\in
    m}\eta_{\la,\gamma}^2\right\}\right]+\\
+c_0(1+\e) p^2\|f\|_1c_{\p,n}^2 m_{\p,n}\ln(n) \left[n^{-\frac{\gamma}{q(1+\e)}}+ n^{-\frac{\gamma}{q}}  (\max(\|f\|_1;1))^{\frac{1}{q}}\right] ,
\end{multline*}
where $c_0$ is an absolute constant. 
\end{Th}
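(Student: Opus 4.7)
My plan is to exploit the penalized contrast interpretation already set up in the excerpt. Starting from the defining inequality $\gamma_n(\hat{f}_{\hat{m}})+\pen(\hat{m})\leq \gamma_n(\hat{f}_m)+\pen(m)$ for any fixed $m\subset\Gamma_n$, and using the identity
$$\gamma_n(g)+\normp{f}^2=\normp{g-f}^2-2\nu_n(g),\qquad \nu_n(g)=\sum_\la \al_\la(\hb_\la-\be_\la)\text{ for }g=\sum_\la\al_\la\tp_\la,$$
I obtain $\normp{\hat{f}_{\hat{m}}-f}^2\leq \normp{\hat{f}_m-f}^2+2\nu_n(\hat{f}_{\hat{m}}-\hat{f}_m)+\pen(m)-\pen(\hat{m})$. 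The linear noise term decomposes as $\nu_n(\hat{f}_{\hat{m}}-\hat{f}_m)=\sum_{\la\in\hat{m}\setminus m}\hb_\la(\hb_\la-\be_\la)-\sum_{\la\in m\setminus\hat{m}}\hb_\la(\hb_\la-\be_\la)$, a finite sum indexed by the symmetric difference.

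The crux is then a high-probability comparison between the empirical threshold $\eta_{\la,\gamma}$ and the true Bernstein deviation bound for $\hb_\la-\be_\la$. Since $n(\hb_\la-\be_\la)=\int\p_\la\,d(N-\E N)$ is a compensated Poisson integral with variance $n\sigma_\la^2$ and jumps bounded by $\norm{\p_\la}_\infty$, Bernstein's inequality yields
$$\P\Bigl(|\hb_\la-\be_\la|\geq \sqrt{2\gamma\ln n\,V_{\la,n}}+\tfrac{\gamma\ln n}{3n}\norm{\p_\la}_\infty\Bigr)\leq 2n^{-\gamma}.$$
A parallel Bernstein argument applied to $\hat{V}_{\la,n}=\frac1{n^2}\int\p_\la^2\,dN$ shows that $\tilde{V}_{\la,n}\geq V_{\la,n}$ on an event of probability at least $1-n^{-\gamma}$; this is precisely why the paper's $\tilde V_{\la,n}$ is augmented by the extra $\sqrt{\cdot}+3\gamma\ln n\,\norm{\p_\la}_\infty^2/n^2$ correction. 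On the resulting \emph{good event} $\Omega_\la$, one has $|\hb_\la-\be_\la|\leq \eta_{\la,\gamma}$.

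On $\bigcap_\la\Omega_\la$, I apply Young's inequality $2|\hb_\la(\hb_\la-\be_\la)|\leq \theta\eta_{\la,\gamma}^2+\theta^{-1}\hb_\la^2$ to each term of $\nu_n(\hat{f}_{\hat{m}}-\hat{f}_m)$ with $\theta$ tuned to match $\e$, then split $\hb_\la^2\leq 2\be_\la^2+2(\hb_\la-\be_\la)^2$ and use the thresholding definition to absorb the $\hb_\la^2$ contributions arising on $\hat{m}\setminus m$ into $\pen(\hat{m})$ (since $|\hb_\la|\geq\eta_{\la,\gamma}$ on $\hat{m}$) and the ones on $m\setminus\hat{m}$ into the bias-plus-variance along $m$. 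The upshot is, on the good event,
$$\normp{\hat{f}_{\hat{m}}-f}^2\leq \tfrac{2+\e}{\e}\bigl[(1+\tfrac{2}{\e})\sum_{\la\notin m}\be_\la^2+\e\sum_{\la\in m}(\hb_\la-\be_\la)^2+\sum_{\la\in m}\eta_{\la,\gamma}^2\bigr],$$
which yields exactly the announced main term after optimizing over $m$.

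For the remainder, I control $\E[\normp{\tilde{f}_{n,\gamma}-f}^2\mathbf{1}_{(\cap_\la\Omega_\la)^c}]$ by H\"older with exponents $p$ and $q$: the factor $\P((\cap_\la\Omega_\la)^c)^{1/q}$ costs $(\sum_\la n^{-\gamma})^{1/q}\lesssim |\Gamma_n|^{1/q}n^{-\gamma/q}$, while the complementary $\L_{2p}$-moment of the error is controlled by Rosenthal-type estimates for Poisson integrals, giving $\|\p_\la\|_\infty^{2p}/n^{2p}$ type moments combined with $\sigma_\la^2$-moments, bounded via $\|f\|_1$. Summing over $\la\in\Gamma_n$ and using $\sum_\la \p_\la^2\leq m_{\p,n}\ln n\cdot c_{\p,n}^2 n$ pointwise (from \eqref{mphi} and the sup-norm assumption) together with $\int f=\|f\|_1$ produces the advertised factor $c_{\p,n}^2 m_{\p,n}\ln(n)\|f\|_1$ and the residual rate $n^{-\gamma/(q(1+\e))}+n^{-\gamma/q}(\max(\|f\|_1;1))^{1/q}$; the $(1+\e)$ in the exponent reflects a further decoupling of threshold and noise moments required because $\tilde V_{\la,n}$ is itself random, forcing a secondary H\"older split at scale $1+\e$.

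The main obstacle I anticipate is the bookkeeping of the \emph{random} threshold: Bernstein-type concentration must be applied simultaneously to $\hb_\la$ and to $\hat V_{\la,n}$, and one must verify that the specific form of $\tilde V_{\la,n}$ given in the statement is exactly what is needed to push the resulting deterministic bound on $V_{\la,n}$ through the argument without losing the advertised exponent $\gamma$. The second delicate point is pairing the Young-type splitting with the penalty so that only the \emph{symmetric difference} $\hat m\triangle m$ contributes cleanly; any imprecision there spoils the sharp constants $1+2/\e$, $\e$, and $\e/(2+\e)$.
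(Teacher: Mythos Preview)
Your outline shares the opening moves with the paper (penalized contrast inequality, $\nu_n$, Bernstein for $\hb_\la-\be_\la$, and the observation that $\tilde V_{\la,n}$ is designed so that $V_{\la,n}\le \tilde V_{\la,n}$ with probability $\ge 1-n^{-\gamma}$). The serious gap is your treatment of the remainder via a \emph{global good event} $\bigcap_{\la\in\Gamma_n}\Omega_\la$. You write $\P\bigl((\bigcap_\la\Omega_\la)^c\bigr)^{1/q}\lesssim |\Gamma_n|^{1/q}n^{-\gamma/q}$, but nothing in the hypotheses of the theorem bounds $|\Gamma_n|$; in the paper's main application $\Gamma_n=\{(j,k):j\le j_0,\ k\in\Z\}$ is \emph{infinite}, so the union bound gives nothing. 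The assumption \eqref{mphi} controls only the \emph{local} overlap $\#\{\la:\p_\la(x)\neq 0\}$, not the global cardinality. This is why the announced remainder has the factor $\|f\|_1 m_{\p,n}\ln n$ (coming from $\sum_{\la\in\Gamma_n}F_\la\le \|f\|_1 m_{\p,n}\ln n$) rather than anything like $|\Gamma_n|$.

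The paper avoids a global event entirely. After the algebra one arrives (deterministically, not only on a good set) at
\[
\frac{\e}{2+\e}\normp{\tilde f_{n,\gamma}-f}^2\le \Bigl(1+\tfrac{2}{\e}\Bigr)\sum_{\la\notin m}\be_\la^2+\e\sum_{\la\in m}(\hb_\la-\be_\la)^2+\pen(m)+\bigl[(1+\e)\chi^2(\hat m)-\pen(\hat m)\bigr],
\]
with $\chi^2(\hat m)=\sum_{\la\in\hat m}(\hb_\la-\be_\la)^2$, and the whole difficulty is to bound
\[
\mathcal{A}=\E\bigl[(1+\e)\chi^2(\hat m)-\pen(\hat m)\bigr]\le \sum_{\la\in\Gamma_n}\E\Bigl[(1+\e)(\hb_\la-\be_\la)^2\,\indic_{(1+\e)(\hb_\la-\be_\la)^2\ge \eta_{\la,\gamma}^2}\,\indic_{|\hb_\la|\ge \eta_{\la,\gamma}}\Bigr].
\]
Each summand is treated \emph{individually} by H\"older with exponents $(p,q)$, and summability over the infinite index set is recovered through the second indicator: $|\hb_\la|\ge \eta_{\la,\gamma}$ forces $N_\la\ge \mu\ln n$ with $\mu=(\sqrt{6}+1/3)\gamma$, which is a large-deviation event for $N_\la$ whenever $nF_\la\le \theta\mu\ln n$. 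This yields a dichotomy (the ``LDLM/LDSM'' split) according to whether $nF_\la$ is large or small; in the first case $\P(|\hb_\la-\be_\la|\ge \eta_{\la,\gamma}/\sqrt{1+\e})\le 3n^{-\gamma/(1+\e)}$ via Bernstein together with $V_{\la,n}\le\tilde V_{\la,n}$, and the sum of the moment factors is controlled by $\sum_\la F_\la$; in the second case one picks up an extra factor $F_\la^{1/q}$ from $\P(N_\la\ge \mu\ln n)$, which again makes the sum converge. This $F_\la$-based splitting, absent from your sketch, is the missing idea that both replaces the unusable union bound and produces the two residual terms $n^{-\gamma/(q(1+\e))}$ and $n^{-\gamma/q}(\max(\|f\|_1,1))^{1/q}$ in the statement.
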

Note that this result is proved under very mild conditions on the
decomposition of $f$. In particular we never use in the proof
that we are working on the real line but only that the decomposition
(\ref{decom}) exists.  Observe also that if we use wavelet bases (see (\ref{Lajk}) in Section
\ref{biorthogonal} below where we recall the standard wavelet setting) and if 
$$\Gamma_n\subset\left\{\la=(j,k)\in\Lambda:\quad 2^j\leq n^c\right\},$$
where $c$ is a constant, then $m_{\p,n}$ does not depend on $n$ and in addition,
$$\sup_n \left[c_{\p,n}n^{-(c-1)/2}\right]<\infty.$$
The threshold seems to be defined in a rather complicated manner. But, first observe that $\forall\,\theta>0$, $\forall\,\la\in\Gamma_n,$
\begin{equation}\label{equivseuil}
\sqrt{2\gamma\ln(n)\hat V_{\la,n}}+\frac{\gamma\ln (n)}{3n}\|\p_\la\|_{\infty}\leq \eta_{\la,\gamma}\leq c_{1,\theta}\sqrt{2\gamma\ln(n)\hat V_{\la,n}}+c_{2,\theta}\frac{\gamma\ln (n)}{3n}\|\p_\la\|_{\infty},
\end{equation}
with                                 $c_{1,\theta}=\sqrt{1+\frac{1}{2\theta}}$,
$c_{2,\theta}=\left(3\sqrt{2\theta+6}+1\right)$. 

Since $\hat V_{\la,n}$ is the  natural estimate of $V_{\la,n}$, the first term
of  the  left  hand side  of  (\ref{equivseuil})  is  similar to  the  threshold
introduced  by   Juditsky  and  Lambert-Lacroix  \cite{jll}   in  the  density
estimation setting.  But unlike Juditsky and Lambert-Lacroix
\cite{jll}, we add a deterministic term that allows to consider $\gamma$ close
to 1 and to control large deviations terms. In addition, since $\eta_{\la,\gamma}$ cannot be equal to 0, this allows to deal with
very  irregular  functions. However,  observe  that,  most  of the  time,  the
deterministic term is negligible compared to the first term as soon as
$\lambda\in\Gamma_n$  satisfies 
$\|\p_\la\|_{\infty}=o_n(n^{1/2})$. Finally, in the same spirit, $V_{\la,n}$ is slightly
overestimated and  we consider $\tilde V_{\la,n}$ instead  of $\hat V_{\la,n}$
to define the threshold. 

The    result    of   Theorem    \ref{inegoracle}    is    an   oracle    type
inequality. By exchanging the expectation and the infimum, the result provides
the expected  oracle inequality claimed in  Theorem \ref{inegoraclelavraie} of
Section \ref{oracle}. Theorem  \ref{inegoraclelavraie} is derived from Theorem
\ref{inegoracle} by evaluating $\E(\sum_{\la\in\Ga_n}\eta_{\la,\gamma}^2)$ and by
using biorthogonal wavelet bases. 
%%%%%%%%%%%%%%%%%%%%%%%%%%%%
%%%%%%%%%%%%%%%%%%%%%%%%%%%%
\section{Biorthogonal wavelet bases and Besov spaces}\label{biorthogonal}
In this paper, the intensity $f$ to  be estimated is assumed to belong to
$\L_1\cap\L_2$.   In  this case,  $f$ can  be
decomposed on the Haar wavelet basis and this property is used throughout
this paper. However, in Section \ref{minimax},  the Haar basis that suffers from lack
of regularity is not   considered. Instead, we  consider a particular
class of biorthogonal
wavelet bases that are described now. For this purpose, let us set 
$$\phi=\indic_{[0,1]}.$$ For any $r\geq 0$, there exist three functions
$\psi$, $\tilde \phi$ and $\tilde\psi$ with the following properties:
\begin{enumerate}
\item $\tilde\phi$ and $\tilde\psi$ are compactly supported,
\item  $\tilde\phi$  and $\tilde\psi$  belong  to  $C^{r+1}$, where  $C^{r+1}$
denotes the H\"older space of order $r+1$,
\item $ \psi$ is compactly supported and is a piecewise constant function,
\item $\psi$ is orthogonal to polynomials of degree no larger than $r$,
\item                     $                     \{(\phi_{k},\psi_{j,k})_{j\geq
0,k\in\Z},(\tilde\phi_{k},\tilde\psi_{j,k})_{j\geq     0,k\in\Z}\}$    is    a
biorthogonal family: $\forall\; j,j'\geq 0,$  $\forall\; k,k'\in\Z,$
$$\int_\R\psi_{j,k}(x)\tilde\phi_{k'}(x)dx=\int_\R\phi_{k}(x)\tilde\psi_{j',k'}(x)dx=0,$$
$$\int_\R\phi_{k}(x)\tilde\phi_{k'}(x)dx=1_{k=k'},\quad
\int_\R\psi_{j,k}(x)\tilde\psi_{j',k'}(x)dx=1_{j=j',k=k'},$$
where for any $x\in\R$ and for any $(j,k)\in~\Z^2$,
$$\phi_{k}(x)=\phi(x-k), \quad \psi_{j,k}(x)=2^{j/2}\psi(2^jx-k)$$
and
$$\tilde\phi_{k}(x)=\tilde\phi(x-k), \quad \tilde\psi_{j,k}(x)=2^{j/2}\tilde\psi(2^jx-k).$$
\end{enumerate}
This implies that for any $f\in\L_1\cap\L_2$, for any $x\in\R$,
$$f(x)=\sum_{k\in\Z}\alpha_k\tilde\phi_k(x)+\sum_{j\geq 0}\sum_{k\in\Z}\beta_{j,k}\tilde\psi_{j,k}(x),$$
where for any  $j\geq 0$ and any $k\in\Z$,
$$\alpha_k=\int_\R          f(x)\phi_k(x)dx,\quad          \beta_{j,k}=\int_\R
f(x)\psi_{j,k}(x)dx.$$Such  biorthogonal  wavelet  bases  have been  built  by
Cohen {\it et al.} \cite{cdf} as a special
case  of spline  systems (see  also the  elegant  equivalent construction  of
Donoho \cite{don} from boxcar functions). 
Of course, recall that all these properties except the second and the forth ones are true
for the Haar basis, where $\tilde\phi=\phi$ and
$\tilde\psi=\psi=\indic_{[0,1/2]}-\indic_{]1/2,1]}$, which allows to obtain in
addition an
orthonormal basis. This last point is not true for general biorthogonal wavelet
bases but we have the frame property: there exist two constants $c_1$ and $c_2$ only depending on the basis such that
$$c_1\left(\sum_{k\in\Z}\alpha_k^2+\sum_{j\geq
0}\sum_{k\in\Z}\beta_{j,k}^2\right)\leq                         \|f\|_{2}^2\leq
c_2\left(\sum_{k\in\Z}\alpha_k^2+\sum_{j\geq
0}\sum_{k\in\Z}\beta_{j,k}^2\right).$$
\\
In the sequel, when wavelet bases are used, we  set
\begin{equation}\label{Lajk}
\Lambda=\{\la=(j,k):\quad j\geq -1,k\in\Z\}.
\end{equation}
We denote  for   any  $\la\in\Lambda$,   $\p_\la=\phi_k$  (respectively
$\tilde\p_\la=\tilde\phi_k$) if $\la=(-1,k)$ and
$\p_\la=\psi_{j,k}$ (respectively $\tilde\p_\la=\tilde\psi_{j,k}$) if $\la=(j,k)$ with $j\geq 0$. Similarly, $\be_\la=\al_k$
if  $\la=(-1,k)$ and $\be_\la=\be_{j,k}$  if $\la=(j,k)$  with $j\geq  0$. So,
(\ref{decom}) is valid. An important feature of the bases introduced previously is the following: there exists a constant $\mu_{\psi}>0$ such that
\begin{equation}\label{minophi}
\inf_{x\in[0,1]}|\phi(x)|\geq 1,\quad\inf_{x\in supp(\psi)}|\psi(x)|\geq\mu_{\psi},
\end{equation}
where $supp(\psi)=\{x\in\R:\quad \psi(x)\not=0\}.$ This property is used
throughout the paper. \\\\
Now, let us recall some properties of Besov
spaces that are extensively used in the next section. We refer the
reader to \cite{dl} and \cite{hardle}  for the definition of Besov spaces,
denoted ${\cal B}^{\al}_{p,q}$ in the sequel, and a review
of their properties explaining their important role in approximation
theory and statistics. We just  recall the sequential characterization of Besov
spaces by using the biorthogonal wavelet basis (for further
details, see \cite{dj}). 
Let $1\leq  p,q\leq \infty$ and $0<\al<r+1$, the  ${\cal B}^\al_{p,q}$-norm of
$f$ is equivalent to the norm
$$
\norm{f}_{\al,p,q}=\left\{
\begin{array}{ll}
\norm{(\al_{k})_k}_{\ell_p}+\left[\sum_{j\geq 0}2^{jq(\al+1/2-1/p)}\norm{(\be_{j,k})_{k}}_{\ell_p}^{q}\right]^{1/q}&\mbox{ if } q<\infty,\\
\norm{(\al_{k})_k}_{\ell_p}+\sup_{j\geq 0}2^{j(\al+1/2-1/p)}\norm{(\be_{j,k})_{k}}_{\ell_p}&\mbox{ if } q=\infty.
\end{array}
\right.
$$
We  use this norm to define the radius of Besov balls.  For any $R>0$, if
$0<\al'\leq\al<r+1$, $1\leq  p\leq p'\leq\infty$ and  $1\leq q\leq q'\leq\infty$,
we obviously have
$$
\mathcal{B}^\al_{p,q}(R)\subset\mathcal{B}^\al_{p,q'}(R),\quad
\mathcal{B}^\al_{p,q}(R)\subset\mathcal{B}^{\al'}_{p,q}(R).$$
Moreover 
\begin{equation}\label{inclusion}
\mathcal{B}^{\al}_{p,q}(R)\subset\mathcal{B}^{\al'}_{p',q}(R)
\mbox{ if } \al-\frac{1}{p}\geq \al'-\frac{1}{p'}. 
\end{equation}
The class of Besov  spaces $\mathcal{B}^\al_{p,\infty}$ provides a useful tool
to classify  wavelet decomposed  signals in function  of their  regularity and
sparsity properties (see \cite{joh}). Roughly speaking, regularity increases when $\al$ increases whereas
sparsity  increases when $p$  decreases. Especially,  the spaces  with indices
$p<2$  are of particular  interest since  they describe  very wide  classes of
inhomogeneous  but  {\it  sparse  functions}  (i.e.   with  a  few  number  of
significant coefficients). The case $p\geq 2$ is typical of {\it dense functions}. 
%%%%%%%%%%%%%%%%%%%%%%%%%%%%%%%%%%%%%%%%%%%
%%%%%%%%%%%%%%%%%%%%%%%%%%%%%%%%%%%%%%%%%%%
\section{Oracle, maxiset and minimax results}\label{omm}
Along this section, we use biorthogonal wavelet bases as
defined in Section \ref{biorthogonal}.
%%%%%%%%%%%%%%%%%%%%%%%%%%%%%%%%%%%%%%%%%%%
\subsection{Oracle inequalities}\label{oracle}
 Ideal adaptation is studied in \cite{dojo} using the class
of shrinkage rules in  the context of  wavelet function
estimation. This is the performance that can be achieved with the
aid of an oracle. In our setting, the oracle  does not tell us the true function, but 
tells us, for our thresholding method, the coefficients that have to be kept. This ``estimator'' obtained
with the aid of an oracle is not a true estimator, of course, since it depends
on $f$.  But it represents  an ideal for  a particular estimation  method. The
approach  of  ideal  adaptation  is   to  derive  true  estimators  which  can
essentially ``mimic'' the performance of the oracle estimator. So, using the
interpretation of thresholding rules as model selection rules, the oracle provides the model $\bar{m}\subset \Ga_n$ such that 
the quadratic risk of $\hat{f}_{\bar{m}}$ is minimum. Since, we have for any $m\subset \Gamma_n$,
$$\E(\normp{\hat f_m-f}^2)=\sum_{\la\in m}V_{\la,n}+\sum_{\la\not\in m}\be_\la^2,$$
the  oracle estimator $\hat{f}_{\bar{m}}$  is obtained  by taking $\bar{m}=\{\la\in
\Ga_n:\quad \be_\la^2>V_{\la,n}\}$ and
$$\hat{f}_{\bar{m}}=\sum_{\la\in\Ga_n}\hat\be_\la\indic_{\be^2_\la> V_{\la,n}}\tp_\la.$$
Its risk (the oracle risk) is then 
$$\E(\normp{\hat{f}_{\bar{m}}-f}^2)=\E\sum_{\la\in\Ga_n}(\hat\be_\la\indic_{\be^2_\la> V_{\la,n}}-\be_\la)^2+\sum_{\la\notin\Ga_n}\be_\la^2=\sum_{\la\in\Gamma_n}\min(\be_\la^2,V_{\la,n})+\sum_{\la\notin\Gamma_n}\be_\la^2.$$
Our aim  is now to  compare the risk  of $\tilde{f}_{n,\gamma}$ to  the oracle
risk.  We deduce from Theorem \ref{inegoracle} the following result.
\begin{Th}
\label{inegoraclelavraie} 
Let us fix  two constants $c\geq 1$  and $c'\in\R$, and let us  define for any
$n$, $j_0=j_0(n)$ the integer such that $2^{j_0}\leq n^c(\ln(n))^{c'}<2^{j_0+1}$. Let $\ga >c$ and let $\eta_{\la,\gamma}$ be as in Theorem \ref{inegoracle}. Then $\tilde f_{n,\gamma}$ defined with
$$\Gamma_n=\left\{\la=(j,k)\in\Lambda:\quad j\leq j_0\right\}$$ achieves the following oracle inequality:
\begin{equation}\label{inegoraclelavraie1}
\E(\normp{\tilde{f}_{n,\gamma}-f}^2)\leq C_1(\gamma,\p)\left[\sum_{\la\in\Gamma_n}\min(\be_\la^2,V_{\la,n}\ln(n))+\sum_{\la\notin\Gamma_n}\be_\la^2\right]+\frac{C_2(\gamma,\|f\|_1, c,c', \p)}{n}
\end{equation}
where $ C_1(\gamma,\p)$ is a positive constant depending only on the
basis and of 
the value of $\ga$ and where $C_2(\gamma,\|f\|_1, c,c', \p)$ is also a positive constant depending
on $\ga$ and the basis but also on $\|f\|_1 $, $c$ and $c'$.\\
\end{Th}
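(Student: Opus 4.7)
My plan is to derive the oracle inequality by starting from Theorem \ref{inegoracle}, interchanging expectation with the infimum, and carefully bounding the threshold contribution using the biorthogonal wavelet structure.

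I would first fix $\epsilon>0$ small and $q>1$ close to $1$ (with $p=q/(q-1)$ large) so that $\epsilon<\gamma-1$, $\gamma/q>1+\epsilon$, and $\gamma/(q(1+\epsilon))>c$. The hypothesis $\gamma>c\geq 1$ makes this triplet simultaneously feasible. Under the prescribed $\Gamma_n=\{(j,k):j\leq j_0\}$ with $2^{j_0}\leq n^c(\ln n)^{c'}$, one has $\|\psi_{j,k}\|_\infty\leq 2^{j_0/2}\|\psi\|_\infty$, hence $c_{\p,n}^2=O(n^{c-1}(\ln n)^{c'})$; also $m_{\p,n}=O(1)$ from the compact support of $\psi$ together with $j_0=O(\ln n)$. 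The constraint $\gamma/(q(1+\epsilon))>c$ then forces the residual of Theorem \ref{inegoracle} to be $O(1/n)$, producing the $C_2/n$ correction with $C_2$ depending on $\|f\|_1,\gamma,c,c',\p$.

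Next I would interchange $\E$ and $\inf$ via $\E[\inf_m F(m)]\leq \inf_m \E[F(m)]$ and test against the deterministic oracle-like set $m^\star=\{\la\in\Gamma_n:\be_\la^2>V_{\la,n}\ln n\}$. By construction of $m^\star$, the first two summands $(1+2/\epsilon)\sum_{\la\notin m^\star}\be_\la^2+\epsilon\sum_{\la\in m^\star}V_{\la,n}$ rearrange into $C(\epsilon)\bigl[\sum_{\la\in\Gamma_n}\min(\be_\la^2,V_{\la,n}\ln n)+\sum_{\la\notin\Gamma_n}\be_\la^2\bigr]$. Applying $(a+b)^2\leq 2(a^2+b^2)$, Jensen's inequality ($\E[\sqrt{\hat V_{\la,n}}]\leq\sqrt{V_{\la,n}}$) and AM--GM to the threshold gives $\E[\eta_{\la,\gamma}^2]\leq C_3\gamma\ln(n)V_{\la,n}+C_4\gamma^2(\ln n)^2\|\p_\la\|_\infty^2/n^2$, whose first piece, summed over $m^\star$, equals $\sum_{m^\star}V_{\la,n}\ln n=\sum_{m^\star}\min(\be_\la^2,V_{\la,n}\ln n)$ and is absorbed into the oracle term.

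The main obstacle is to control the deterministic residual $D_n=(\ln n)^2\sum_{\la\in m^\star}\|\p_\la\|_\infty^2/n^2$, non-trivial because $k$ ranges over $\Z$ and $\sum_k\|\psi_{j,k}\|_\infty^2$ diverges. Here I would exploit the two-sided estimates (\ref{minophi}): setting $F_{j,k}=\int_{supp(\psi_{j,k})}f$ and $c_0=\mu_\psi^2/\|\psi\|_\infty^2$, they yield the lower bound $\si_\la^2\geq c_0\|\p_\la\|_\infty^2 F_{j,k}$, while the trivial $|\be_\la|\leq\|\p_\la\|_\infty F_{j,k}$ combined with the membership condition $\be_\la^2>V_{\la,n}\ln n$ on $m^\star$ forces $F_{j,k}>c_0\ln(n)/n$. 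Together these yield the pointwise bound $(\ln n/n)^2\|\p_\la\|_\infty^2\leq V_{\la,n}\ln(n)\cdot \ln(n)/(c_0 n F_{j,k})\leq (1/c_0^2)V_{\la,n}\ln n$ on $m^\star$, so $D_n\leq(1/c_0^2)\sum_{\la\in\Gamma_n}\min(\be_\la^2,V_{\la,n}\ln n)$ is absorbed into the oracle term. Consolidating the multiplicative constants into $C_1(\gamma,\p)$ (depending on $\mu_\psi$, $\|\psi\|_\infty$, and the bounded-overlap constant of the basis) finishes the proof.
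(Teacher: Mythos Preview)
Your proposal is correct and follows essentially the same route as the paper: choose $\epsilon,q$ so that $\gamma>cq(1+\epsilon)$ to absorb the residual of Theorem~\ref{inegoracle} into $C_2/n$, interchange $\E$ and $\inf$, bound $\E[\eta_{\la,\gamma}^2]$ by a $V_{\la,n}\ln n$ term plus a $(\ln n)^2\|\p_\la\|_\infty^2/n^2$ term, and then control the latter on the oracle set via the two-sided estimates $\si_\la^2\geq c_0\|\p_\la\|_\infty^2 F_\la$ and $|\be_\la|\leq\|\p_\la\|_\infty F_\la$; this is precisely the content of the paper's Lemma~\ref{ecra}, and your constant $c_0=\mu_\psi^2/\|\psi\|_\infty^2$ is the reciprocal of the paper's $\Theta_\p$ (up to taking the worst case over $\phi$ and $\psi$, which you should mention for the level $j=-1$). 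The only cosmetic difference is that the paper tests against $m=\{\be_\la^2>\Theta_\p^2 V_{\la,n}\ln n\}$ while you use $m^\star=\{\be_\la^2>V_{\la,n}\ln n\}$; both cutoffs yield the same final inequality with constants depending only on $\gamma$ and the basis.
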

The oracle inequality (\ref{inegoraclelavraie1}) satisfied by $\tilde{f}_{n,\gamma}$ proves that this
estimator achieves essentially the oracle risk up to a logarithmic term. This
logarithmic term is the price we pay for adaptivity, i.e.  for not knowing the
wavelet coefficients that have to be kept. 
In  section \ref{penaltyterm}, optimization  of the  constants of  the stated
result is performed for a particular class of functions.\\
%%%%%%%%%%%%%%%%%%%%%%%%%%%%%%%%%%%%%%%%%%%
\subsection{Maxiset results}\label{maxiset}
As said in the introduction, if $f^*$ is a given procedure, the maxiset study of $f^*$ consists in deciding the accuracy of the estimate by fixing a 
prescribed rate $\rho^*$ and in pointing out all the functions $f$ such 
that $f$ can be estimated by the procedure $f^*$ at the target 
rate $\rho^*$. The maxiset of the procedure $f^*$ for this rate 
$\rho^*$ is the set of all these functions. So, we 
set the following definition. 
\begin{Def}
Let $\rho^*=(\rho^*_n)_n$ be a decreasing sequence of
positive real numbers and let $f^*=(f_n^*)_n$ be an
estimation procedure. The maxiset of $f^*$  associated with the
rate $\rho^*$ and the $\L_{2}$-loss is 
$$MS(f^*,\rho^*) =\left\{f\in\L_{1}\cap\L_{2}:\quad
\sup_n\left[(\rho^*_n)^{-2}\E\normp{f_n^*-f}^2\right]<+\infty\right\},$$
the ball of radius $R>0$ of the maxiset is defined by
$$MS(f^*,\rho^*)(R) =\left\{f\in\L_{1}\cap\L_{2}:\quad
\sup_n\left[(\rho^*_n)^{-2}\E\normp{f_n^*-f}^2\right]\leq R^2\right\}.$$
\end{Def}
To establish the maxiset result of this section, we use Theorem \ref{inegoraclelavraie}, so we
need to assume  that the estimation procedure is performed in a  ball of $\L_1\cap\L_2$.  Even, if the size of
the balls does not play an important role, this assumption is essential.
In this setting, we use the following notation. If ${\cal F}$ is a given space
 \begin{eqnarray*}
MS(f^*,\rho^*) &:=&{\cal F}
 \end{eqnarray*}
means in the sequel that for any $R>0$, there exists $R'>0$ such that
 $$MS(f^*,\rho^*)(R)\cap \L_1(R)\cap\L_2(R) \subset {\cal F}(R')\cap \L_1(R)\cap\L_2(R)$$
and for any $R'>0$, there exists $R>0$ such that $${\cal F}(R')\cap \L_1(R')\cap\L_2(R')\subset MS(f^*,\rho^*)(R)\cap \L_1(R')\cap\L_2(R').$$
In this section, for any $\al>0$, we investigate the set of functions that can be estimated by
$\tilde f_\gamma=(\tilde f_{n,\gamma})_n$ at
the rate $\rho_\al=(\rho_{n,\al})_n$, where for any $n$, 
$$\rho_{n,\al}=\left(\frac{\ln(n)}{n}\right)^{\frac{\al}{1+2\al}}.$$
More precisely, we  investigate for any radius $R>0$:
$$MS(\tilde f_\gamma,\rho_\al)(R)=\left\{f\in\L_{1}\cap\L_{2}:\quad
\sup_n\left[\rho_{n,\al}^{-2}\E\normp{\tilde
f_{n,\gamma}-f}^2\right]\leq R^2\right\}.$$ 
To  characterize  maxisets of  $\tilde  f_\gamma$, we introduce the following spaces. 
\begin{Def}
We define for all $R>0$ and for all $s>0,$
$$W_s=\left\{f=\sum_{\la\in\Lambda}\be_\la\tp_\la:\quad \sup_{t>0}t^{\frac{-4s}{1+2s}}\sum_{\la\in\Lambda}\be_\la^2
\indic_{|\be_\la|\leq \si_\la t}<\infty\right\},$$
  the ball of radius $R$ associated with $W_s$ is:
$$W_s(R)=\left\{f=\sum_{\la\in\Lambda}\be_\la\tp_\la:\quad \sup_{t>0}t^{\frac{-4s}{1+2s}}\sum_{\la\in\Lambda}\be_\la^2
\indic_{|\be_\la|\leq \si_\la t}\leq R^{\frac{2}{1+2s}}\right\},$$
and for any sequence of spaces $\Gamma=(\Gamma_n)_n$ included in $\Lambda$,
$$B^s_{2,\Gamma}=\left\{f=\sum_{\la\in\Lambda}\be_\la\tp_\la:\quad  \sup_n\left[\left(\frac{\ln(n)}{n}\right)^{-2s}\sum_{\la\not\in\Gamma_n}\be_\la^2\right]<\infty\right\}$$
and
$$B^s_{2,\Gamma}(R)=\left\{f=\sum_{\la\in\Lambda}\be_\la\tp_\la:\quad  \sup_n\left[\left(\frac{\ln(n)}{n}\right)^{-2s}\sum_{\la\not\in\Gamma_n}\be_\la^2\right]\leq R^2\right\}.$$
\end{Def}
In \cite{dl}, a justification of the form of the radius of $W_s$ and 
further details are
 provided. These
spaces can be viewed as weak versions of classical Besov spaces, hence
they are
 denoted in the sequel weak Besov spaces. In particular, the spaces $W_s$ naturally model sparse signals (see \cite{rivbernoulli}).  
Note that if for all~$n$,
$$\Gamma_n=\left\{\la=(j,k)\in\Lambda:\quad     j\leq    j_0\right\}$$    with
$$2^{j_0}\leq \left(\frac{n}{\ln n}\right)^c<2^{j_0+1},\quad c>0$$ then, $B^s_{2,\Gamma}$ is the classical Besov
space ${\cal  B}^{s/c}_{2,\infty}$ if  some properties of regularity  and vanishing
moments are
satisfied by the wavelet basis (see Section \ref{biorthogonal}). 
We define $ B^s_{2,\Gamma}$ and $W_s$ by using biorthogonal wavelet
bases. However, as established in \cite{dl}, they also have
different definitions proving that, under mild conditions, this dependence on the basis is not crucial at all.
Using Theorem \ref{inegoraclelavraie}, we have the following result. 
\begin{Th}\label{maxisets}
Let us fix two constants $c\geq 1$ and $c'\in\R$, and let us define for any
$n$, $j_0=j_0(n)$ the integer such that $2^{j_0}\leq
n^c(\ln(n))^{c'}<2^{j_0+1}$.  Let $\ga  >c$ and  let $\eta_{\la,\gamma}$ be  as  in Theorem
\ref{inegoracle}. Then, the procedure  defined in (\ref{defest}) with
the sequence $\Gamma=(\Gamma_n)_n$ such that
$$\Gamma_n=\left\{\la=(j,k)\in\Lambda:\quad  j\leq j_0\right\}$$  achieves the
following maxiset performance: for all $\al>0$,
$$MS(\tilde f_\gamma,\rho_\al):=B^{\frac{\al}{1+2\al}}_{2,\Gamma}\cap W_\al.$$
In particular, if $c'=-c$  and $0<\frac{\al}{c(1+2\al)}<r+1$, where $r$ is the
parameter of the biorthogonal basis introduced in Section \ref{biorthogonal},
$$MS(\tilde f_\gamma,\rho_\al):={\cal B}^{\frac{\al}{c(1+2\al)}}_{2,\infty}\cap W_\al.$$
\end{Th}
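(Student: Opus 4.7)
The statement is a double inclusion between a maxiset and a functional space, so the plan splits into a direct part (functional $\Rightarrow$ rate) and a converse part (rate $\Rightarrow$ functional), followed by the identification with a classical Besov ball. Throughout, write $s = \al/(1+2\al)$ and note $\rho_{n,\al}^2 = (\ln n/n)^{2s}$.

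\textbf{Direct inclusion} $B^{s}_{2,\Gamma}\cap W_\al \subset MS(\tilde f_\gamma,\rho_\al)$. The plan is to feed Theorem \ref{inegoraclelavraie} with the two assumed conditions. The residual term $C_2/n$ is negligible compared to $\rho_{n,\al}^2$, and the tail $\sum_{\la\notin\Gamma_n}\be_\la^2$ is bounded by $R^2\rho_{n,\al}^2$ directly by the definition of $B^{s}_{2,\Gamma}$. The real work is to bound $\sum_{\la\in\Gamma_n}\min(\be_\la^2,V_{\la,n}\ln n)$ by $C\rho_{n,\al}^2$. Setting $t_n=\sqrt{\ln n/n}$, one splits the min according to whether $|\be_\la|\leq \si_\la t_n$ or not. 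The first piece $\sum \be_\la^2\indic_{|\be_\la|\leq\si_\la t_n}$ is exactly the quantity controlled by the definition of $W_\al$, and yields $R^{2/(1+2\al)}t_n^{4\al/(1+2\al)}=R^{2/(1+2\al)}\rho_{n,\al}^2$. The second piece $\sum \si_\la^2 t_n^2\indic_{|\be_\la|>\si_\la t_n}$ requires the classical dual estimate: a layer-cake/Abel summation argument (as in Kerkyacharian--Picard) shows that, under $W_\al$, this sum is also $O(t_n^{4\al/(1+2\al)})$. Summing up yields the desired bound.

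\textbf{Converse inclusion} $MS(\tilde f_\gamma,\rho_\al)\subset B^{s}_{2,\Gamma}\cap W_\al$. The first inclusion is immediate: since $\tilde f_{n,\gamma}$ lives in the span of $\{\tp_\la, \la\in\Gamma_n\}$, the quadratic error splits as $\sum_{\la\in\Gamma_n}(\tb_\la-\be_\la)^2+\sum_{\la\notin\Gamma_n}\be_\la^2$, so any maxiset member automatically satisfies $\sum_{\la\notin\Gamma_n}\be_\la^2\leq R^2\rho_{n,\al}^2$, i.e.\ belongs to $B^{s}_{2,\Gamma}$. For the $W_\al$ part I would exploit that the threshold $\eta_{\la,\gamma}$ is comparable to $\si_\la\sqrt{\gamma\ln n/n}$ on $\Gamma_n$ (via \eqref{equivseuil}). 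Fix such a sequence $\tau_n\asymp\sqrt{\ln n/n}$ slightly smaller than the threshold: for $\la\in\Gamma_n$ with $|\be_\la|\leq \si_\la\tau_n/2$, Bernstein's inequality applied to $\hb_\la-\be_\la$ shows $\P(|\hb_\la|\geq\eta_{\la,\gamma})$ is polynomially small in $n$, so $\tb_\la=0$ with overwhelming probability and $\E(\tb_\la-\be_\la)^2\geq c\be_\la^2$. Summing and using the maxiset hypothesis gives $\sum_{\la\in\Gamma_n}\be_\la^2\indic_{|\be_\la|\leq\si_\la\tau_n/2}\lesssim \rho_{n,\al}^2\asymp \tau_n^{4\al/(1+2\al)}$. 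Combined with the $B^{s}_{2,\Gamma}$ bound on the tail $\la\notin\Gamma_n$, this controls the $W_\al$ seminorm along the dyadic sequence $\tau_n$; the estimate for arbitrary $t>0$ follows by monotonicity between two consecutive $\tau_n$'s, losing only a multiplicative constant.

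\textbf{Identification with the Besov ball} when $c'=-c$. Here $2^{j_0}\asymp (n/\ln n)^c$, so $(\ln n/n)^{2s}\asymp 2^{-2j_0 s/c}\cdot \ln n/n$ terms aside, and one uses the sequential wavelet characterization of $\mathcal{B}^{s/c}_{2,\infty}$ recalled in Section \ref{biorthogonal}. The sum $\sum_{j> j_0}\sum_k\be_{j,k}^2$ is of order $2^{-2 j_0 s/c}\asymp(\ln n/n)^{2s}$ iff $f$ lies in $\mathcal{B}^{s/c}_{2,\infty}$, provided the regularity and vanishing-moments parameter $r$ is large enough, which is exactly the assumption $0<s/c<r+1$. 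So $B^{s}_{2,\Gamma}=\mathcal{B}^{s/c}_{2,\infty}$ and the second assertion follows from the first.

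\textbf{Main obstacle.} The delicate step is the converse direction, specifically the probabilistic lower bound $\E(\tb_\la-\be_\la)^2\geq c\be_\la^2$ on the subthreshold coefficients: it requires a uniform-in-$\la$ Bernstein deviation estimate for $\hb_\la-\be_\la$ that tolerates the data-driven threshold $\eta_{\la,\gamma}$ (which involves the empirical variance $\tilde V_{\la,n}$ rather than the deterministic $V_{\la,n}$). One must show that with high probability $\eta_{\la,\gamma}\leq c\si_\la\sqrt{\gamma\ln n/n}$ uniformly over $\Gamma_n$, so that the event $\{\tb_\la=0\}$ is indeed implied by $\{|\hb_\la|<\eta_{\la,\gamma}\}$ on a set of large probability. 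The rest is bookkeeping with the Besov and weak-Besov seminorms.
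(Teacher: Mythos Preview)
Your direct inclusion and the converse to $B^{s}_{2,\Gamma}$ match the paper's proof exactly, including the dyadic layer-cake argument for the variance piece.

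For the converse to $W_\al$, your approach is workable but heavier than the paper's, and your ``main obstacle'' paragraph has the key inequality direction backwards. The paper avoids Bernstein on $\hb_\la$ entirely by using the deterministic pointwise inequality
\[
|\be_\la|\,\indic_{|\be_\la|\leq \eta_{\la,\gamma}/2}\leq |\be_\la-\tb_\la|,
\]
which holds surely: if $\tb_\la=0$ both sides equal $|\be_\la|$, and if $\tb_\la=\hb_\la$ then $|\hb_\la|\geq\eta_{\la,\gamma}\geq 2|\be_\la|$ so $|\hb_\la-\be_\la|\geq|\be_\la|$. One then writes, for $t_n=\sqrt{\gamma\ln n/(2n)}$,
\[
\be_\la^2\,\indic_{|\be_\la|\leq\si_\la t_n}
\leq (\tb_\la-\be_\la)^2 + \be_\la^2\,\indic_{\si_\la t_n > \eta_{\la,\gamma}/2},
\]
takes expectations, sums over $\la\in\Gamma_n$, adds the tail $\la\notin\Gamma_n$, and uses the maxiset hypothesis on the first term. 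The only probabilistic input needed is
\[
\P\bigl(\eta_{\la,\gamma}<\si_\la\sqrt{2\gamma\ln n/n}\bigr)\leq \P(\tilde V_{\la,n}<V_{\la,n})\leq n^{-\gamma},
\]
which is Lemma~\ref{toutesdev}. This is a \emph{lower} bound on $\eta_{\la,\gamma}$; your obstacle paragraph asks for an \emph{upper} bound $\eta_{\la,\gamma}\leq c\si_\la\sqrt{\gamma\ln n/n}$, which is the wrong direction (a smaller threshold makes $\{\tb_\la=0\}$ less likely, not more) and moreover $\{\tb_\la=0\}$ equals $\{|\hb_\la|<\eta_{\la,\gamma}\}$ by definition, so the implication you state is vacuous. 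Your Bernstein route can be salvaged, but it requires both the deviation bound on $\hb_\la-\be_\la$ \emph{and} the lower bound on $\eta_{\la,\gamma}$, so it is strictly more work than the paper's argument.

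The identification $B^{s}_{2,\Gamma}=\mathcal{B}^{s/c}_{2,\infty}$ when $c'=-c$ is handled exactly as you say.
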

\begin{Remark}\label{uppga}
In order to obtain maxisets  as large as possible, Inequality (\ref{gammamin})
of the proof of Theorem \ref{maxisets} suggests
to choose $\ga>1$ as small as possible. 
\end{Remark}
The maxiset of $\tilde f_\gamma$ is  characterized by two spaces: a weak Besov
space  that  is directly  connected  to  the  thresholding nature  of  $\tilde
f_\gamma$  and  the   space  $B^{\al/(1+2\al)}_{2,\Gamma}$  that  handles  the
coefficients  that  are  not  estimated,  which  corresponds  to  the  indices
$j>j_0$. This maxiset result is similar to the result obtained by Autin
\cite{aut} in the density estimation setting but our assumptions are
less restrictive (see Theorem 5.1 of \cite{aut}). \\
Now, let us point out a family of examples of functions that
illustrates the previous result. For this purpose, we consider the Haar
basis that allows to have simple formula for the wavelet coefficients. Let us consider for any $0<\beta<1/2$, $f_\beta$ such that
$$\forall\;x\in\R,\quad f_\beta(x)=x^{-\beta}1_{x\in ]0,1]}.$$
The following result points out that if $\al$ is small
enough, for  a convenient  choice  of $\beta$,
$f_\beta$  belongs  to  $MS(\tilde  f_\gamma,\rho_\al)$ (so  $f_\beta$  can  be
estimated at the rate $\rho_\al$), and in addition $f_\beta\not\in \L_{\infty}.$
\begin{Prop}\label{contreexlinfini}
We consider the Haar basis and we set  $c'=-c$.  For  $0<\al<1/4$, under  the
assumptions of Theorem \ref{maxisets}, if 
$$0<\beta\leq\frac{1-4\al}{2+4\al},$$ then for $c$ large enough,
\begin{equation}\label{MSbeta}
f_\beta\in                 MS(\tilde                 f_\gamma,\rho_\al):={\cal
B}^{\frac{\al}{c(1+2\al)}}_{2,\infty}\cap W_\al,\nonumber
\end{equation}
where ${\cal
B}^{\frac{\al}{c(1+2\al)}}_{2,\infty}$ and $W_\al$ are viewed as sequence
spaces. In addition, $f_\beta\not\in \L_{\infty}.$
\end{Prop}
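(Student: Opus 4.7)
The plan is to verify the three assertions separately. The non-boundedness $f_\beta\notin\L_\infty$ is immediate since $f_\beta(x)\to+\infty$ as $x\to 0^+$, and $f_\beta\in\L_1\cap\L_2$ follows from $\beta<1/2$. The two maxiset memberships reduce to computing the Haar coefficients and variances of $f_\beta$ and performing two summation estimates.

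First I would compute the Haar coefficients explicitly. Direct integration yields $\beta_{j,0}=(2^\beta-1)(1-\beta)^{-1}\,2^{j(\beta-1/2)}$ and $\sigma_{j,0}^2=(1-\beta)^{-1}\,2^{j\beta}$. For $k\geq 1$, a first-order Taylor expansion of $x^{-\beta}$ around $x=k/2^j$ combined with $\int\psi=0$ produces the sharper estimate $|\beta_{j,k}|\lesssim 2^{j(\beta-1/2)}k^{-\beta-1}$, together with $\sigma_{j,k}^2\asymp 2^{j\beta}k^{-\beta}$. Summing over $k$ gives $\sum_k\beta_{j,k}^2\asymp 2^{2j(\beta-1/2)}$; the Besov membership $f_\beta\in\mathcal{B}^{s}_{2,\infty}$ with $s=\alpha/(c(1+2\alpha))$ amounts to $s+\beta\leq 1/2$, which holds provided $c$ is large enough, because the hypothesis on $\beta$ forces $\beta$ to be strictly less than $1/2$.

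The weak Besov condition is the heart of the argument. The relevant signal-to-noise ratios are $|\beta_{j,0}|/\sigma_{j,0}\asymp 2^{-j(1-\beta)/2}$ and, for $k\geq 1$, $|\beta_{j,k}|/\sigma_{j,k}\asymp 2^{-j(1-\beta)/2}k^{-(\beta+2)/2}$. For a given $t>0$, set $J_0(t)=\lfloor -2\log_2(t)/(1-\beta)\rfloor$; the indicator $\indic_{|\beta_\lambda|\leq \sigma_\lambda t}$ is non-zero precisely when $j\geq J_0$ (if $k=0$) or when $k\geq K_0(j,t):=\lceil t^{-2/(\beta+2)}\,2^{-j(1-\beta)/(\beta+2)}\rceil$ (if $k\geq 1$). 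I would split the double sum $\sum_\lambda \beta_\lambda^2\indic_{|\beta_\lambda|\leq\sigma_\lambda t}$ according to these cutoffs. Using the convergence of $\sum k^{-2\beta-2}$ (since $\beta>0$) and of the remaining geometric series in $j$ (since $\beta<1/2$), each piece reduces after elementary algebra to a constant multiple of $t^{2(1-2\beta)/(1-\beta)}$. Requiring the exponent to be at least $4\alpha/(1+2\alpha)$ is equivalent to $\beta\leq 1/(2+2\alpha)$, which is implied by the hypothesis $\beta\leq(1-4\alpha)/(2+4\alpha)$ by a direct algebraic comparison. This gives $f_\beta\in W_\alpha$ and concludes the proof.

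The main obstacle will be the book-keeping in the double sum for the weak Besov norm: the cutoff $K_0(j,t)$ may fall above or below $1$ depending on $j$, so the inner sum over $k$ has to be split into two regimes, and within the regime $j<J_0$ one may need to distinguish whether $\beta$ is smaller or larger than $1/4$ to evaluate the geometric series in $j$ correctly. However, each sub-case contributes the same critical power $t^{2(1-2\beta)/(1-\beta)}$, so the final step is purely a comparison of exponents.
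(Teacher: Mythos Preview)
Your weak-Besov computation contains an error. You assert that ``each sub-case contributes the same critical power $t^{2(1-2\beta)/(1-\beta)}$'', but this fails when $\beta<1/4$. In the regime $j<J_0$ (where $K_0(j,t)\geq 1$), the per-$j$ upper bound $2^{-j(1-2\beta)}K_0(j,t)^{-(1+2\beta)}$ equals, up to constants, $t^{2(1+2\beta)/(2+\beta)}\,2^{j(4\beta-1)/(2+\beta)}$. For $\beta<1/4$ the exponent of $2^j$ is negative, so the geometric series is dominated by its \emph{smallest} $j$, not by $j\approx J_0$; the resulting contribution is at least of order $t^{4/3}$ (if you enforce the support constraint $k<2^j$, which introduces the further cutoff $2^j\gtrsim t^{-2/3}$ that you do not mention) or $t^{2(1+2\beta)/(2+\beta)}$ (if you do not). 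Either exponent is strictly smaller than $2(1-2\beta)/(1-\beta)$ when $\beta<1/4$, so your claimed bound $S(t)\lesssim t^{2(1-2\beta)/(1-\beta)}$ is too strong there. The conclusion is salvageable---both corrected exponents still exceed $4\alpha/(1+2\alpha)$ for $\alpha<1/4$---but the argument and the final exponent comparison must be redone case by case.

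The paper avoids all of this by a cruder but cleaner estimate that also explains the exact form of the hypothesis. Because $f_\beta$ is supported in $[0,1]$, one has $\beta_{j,k}=0$ for $k\geq 2^j$; combining this with the threshold $K_0(j,t)$ shows that the indicator sum at level $j$ vanishes unless $K_0(j,t)\leq 2^j$, i.e.\ unless $2^j\gtrsim t^{-2/3}$. For those $j$ the paper simply drops the indicator and uses $\sum_k\beta_{j,k}^2\lesssim 2^{-j(1-2\beta)}$, obtaining
\[
\sum_{\lambda}\beta_\lambda^2\,\indic_{|\beta_\lambda|\leq t\sigma_\lambda}
\;\lesssim\;\sum_{2^j\gtrsim t^{-2/3}}2^{-j(1-2\beta)}
\;\asymp\;t^{(2-4\beta)/3}.
\]
The requirement $(2-4\beta)/3\geq 4\alpha/(1+2\alpha)$ is \emph{equivalent} to $\beta\leq(1-4\alpha)/(2+4\alpha)$, with no case splitting on $\beta$ needed. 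Your sharper book-keeping would, once corrected, show that $f_\beta\in W_\alpha$ on a strictly larger range of $\beta$, but it is not needed for the proposition as stated.
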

This result is proved by using the Haar basis, so the functional spaces are
viewed as sequence spaces.  We conjecture that for more general biorthogonal wavelet bases, we can also build not bounded functions that belong to $MS(\tilde f_\gamma,\rho_\al).$ 
%%%%%%%%%%%%%%%%%%%%%%%%%%%%%%%%%%%%%%%%%%%
\subsection{Minimax results}\label{minimax}
Let ${\cal F}$ be  a functional space and ${\cal F}(R)$ be  the ball of radius
$R$ associated with ${\cal F}$. ${\cal F}(R)$ is assumed to belong to a ball of $\L_1\cap\L_2$. Let us recall that a procedure $f^*=(f_n^*)_n$ achieves the rate $\rho^*=(\rho^*_n)_n$ on ${\cal F}(R)$ (for the $\L_2$-loss) if 
$$\sup_n\left[(\rho^*_n)^{-2}\sup_{f\in {\cal F}(R)}\E(\normp{f_n^*-f}^2)\right]<\infty.$$
Let us consider the procedure $\tilde f_\gamma$ and the rate $\rho_\al=(\rho_{n,\al})_n$ where for any $n$,
$$\rho_{n,\al}=\left(\frac{\ln (n)}{n}\right)^{\frac{\al}{1+2\al}}$$
as in  the previous  section. Obviously, $\tilde  f_\gamma$ achieves  the rate
$\rho_\al$ on ${\cal F}(R)$ if and only if there exists $R'>0$ such that $${\cal F}(R)\subset MS(\tilde
f_\gamma,\rho_\al)(R')\cap \L_1(R')\cap \L_2(R').$$
Using  results  of the  previous  section, if  $c'=-c$  and  if properties  of
regularity and vanishing moments are
satisfied by the wavelet basis, this is satisfied if
and only if there exists $R''>0$ such that $${\cal F}(R)\subset {\cal
B}^{\frac{\al}{c(1+2\al)}}_{2,\infty}(R'')\cap W_\al(R'')\cap \L_1(R'')\cap \L_2(R'').$$  We  apply this simple rule for
Besov balls.   
So, in the sequel, we  assume that the function $f$ to be estimated belongs
to  a ball of  $\L_1\cap \L_2$.  In addition,  we   assume that  $f$ also
belongs to a ball of $\L_{\infty}$. This last assumption which is not necessary
to derive maxiset results (see Theorem \ref{maxisets} or Proposition \ref{contreexlinfini}) is
unavoidable in some sense in the minimax setting. For a precise justification of this point,
see for  instance Corollary 1 of  \cite{bir}. Consequently, in  the sequel, we
set for any $R>0$,
$${\cal  L}_{1,2,\infty}(R)=\left\{f:\quad  \norm{f}_1\leq  R,  \norm{f}_2\leq
R,\norm{f}_{\infty}\leq R\right\}.$$
In  the sequel,  minimax  results   depend  on the  parameter  $r$ of  the
biorthogonal basis introduced in
Section \ref{biorthogonal} to measure the regularity of the reconstruction wavelets $(\tilde\phi,\tilde\psi)$. 
%%%%%%%%%%%%%%%%%%%%%%%%%%%%%%%%%%%%%%%%%%%
\subsubsection{Minimax  estimation on  Besov spaces  ${\cal B}^\al_{p,q}$
when $p\leq 2$}\label{nc-sparse}
To  the  best  of  our  knowledge,  the minimax  rate  is  unknown  for  ${\cal
B}^\al_{p,q}$ when $p<\infty$. Let us investigate this problem by pointing out
the minimax properties of $\tilde f_\gamma$ on ${\cal B}^\al_{p,q}$
when $p\leq 2$. We have the following result.
\begin{Th}\label{minimaxnoncompact}
Let $R,R'>0$, $1\leq p,q\leq \infty$ and
$\al\in\R$ such that $\max(0,1/p-1/2)<\al<r+1$. 
Let $c\geq 1$ large enough such that
\begin{equation}\label{sural}
\al\left(1-\frac1{c(1+2\alpha)}\right)\geq \frac1p -\frac12.
\end{equation}
Let us define for any $n$, $j_0=j_0(n)$ the integer such
that $$2^{j_0}\leq n^c(\ln(n))^{-c}<2^{j_0+1}.$$ Then, if $p\leq 2$, $\tilde f_\gamma=(\tilde f_{n,\gamma})_n$ defined with
$$\Gamma_n=\left\{\la=(j,k)\in\Lambda:\quad j\leq j_0\right\}$$ and $\ga >c$ achieves the
rate $\rho_{\al}$
on ${\cal B}^{\al}_{p,q}(R)\cap{\cal L}_{1,2,\infty}(R') $. Indeed, for any $n$,
\begin{equation}\label{risqueBesov}
\sup_{f\in       {\cal       B}^{\al}_{p,q}(R)\cap{\cal L}_{1,2,\infty}(R')}\E(\normp{\tilde{f}_{n,\gamma}-f}^2)\leq
C(\ga,c,R,R',\al,p,\p)\left(\frac{\ln n}{n}\right)^{2\al/(1+2\al)}
\end{equation}
where $C(\ga,c,R,R',\al,p,\p)$ depends on $R'$, $\ga$, $c$, on the parameters of the Besov ball and on the
basis.\\ Furthermore, let  $p^*\geq 1$ and $\al^*>0$ such that 
\begin{equation}\label{adapt*}
\al^*\left(1-\frac1{c(1+2\alpha^*)}\right)\geq \frac{1}{p^*} -\frac12. 
\end{equation}
 Then, $\tilde f_\gamma$ is adaptive minimax up to a logarithmic term on $$\left\{{\cal
B}^{\al}_{p,q}\cap{\cal  L}_{1,2,\infty}:\quad   \al^*\leq\al<  r+1,  \
p^*\leq p\leq 2,\ 1\leq
q\leq\infty\right\}.$$
\end{Th}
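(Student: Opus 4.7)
The plan is to deduce the rate bound~\eqref{risqueBesov} from the maxiset characterisation of Theorem~\ref{maxisets} rather than attacking the oracle inequality directly. Since $c'=-c$ and since $c\ge 1$ together with $\al<r+1$ imply $0<\al/(c(1+2\al))<r+1$, Theorem~\ref{maxisets} applies and yields
$$MS(\tilde f_\gamma,\rho_\al):={\cal B}^{\al/(c(1+2\al))}_{2,\infty}\cap W_\al.$$
Hence it is enough to find $R''>0$ such that
$${\cal B}^\al_{p,q}(R)\cap {\cal L}_{1,2,\infty}(R')\subset {\cal B}^{\al/(c(1+2\al))}_{2,\infty}(R'')\cap W_\al(R''),$$
the $\L_1\cap\L_2$ components being already included in ${\cal L}_{1,2,\infty}(R')$.

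For the Besov embedding I apply~\eqref{inclusion} with $p'=2$ and $\al'=\al/(c(1+2\al))$: the hypothesis $\al-1/p\ge\al'-1/p'$ is precisely assumption~\eqref{sural}. Together with $q\le \infty$, this gives ${\cal B}^\al_{p,q}(R)\subset {\cal B}^{\al/(c(1+2\al))}_{2,\infty}(R)$, so this embedding is free.

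The real work is the embedding into $W_\al$. Fix $t>0$ and bound $\sum_k\be_{j,k}^2\indic_{|\be_{j,k}|\le\si_{j,k}t}$ at each scale $j$ in two complementary ways. First, the crude bound $\be_{j,k}^2\indic_{|\be_{j,k}|\le \si_{j,k}t}\le \si_{j,k}^2 t^2$ combined with the pointwise estimate $\sum_k\psi_{j,k}^2(x)=2^j\sum_k\psi^2(2^j x-k)\le C_\psi 2^j$ (valid because $\psi$ is compactly supported) and with $\|f\|_1\le R'$ gives
$$\sum_k\be_{j,k}^2\indic_{|\be_{j,k}|\le \si_{j,k}t}\le t^2\int\Bigl(\sum_k\psi_{j,k}^2\Bigr)f\le C_\psi R'\,t^2\,2^{j}.$$
Second, using $\be_{j,k}^2\indic_{|\be_{j,k}|\le \si_{j,k}t}\le (\si_{j,k}t)^{2-p}|\be_{j,k}|^p$ (valid for $p\le 2$), the $\L_\infty$-bound $\si_{j,k}^2\le C_\psi R'$ and the Besov estimate $\sum_k|\be_{j,k}|^p\le R^p 2^{-jpc_p}$ with $c_p:=\al+1/2-1/p$ yield
$$\sum_k\be_{j,k}^2\indic_{|\be_{j,k}|\le \si_{j,k}t}\le (C_\psi R')^{(2-p)/2}R^p\,t^{2-p}\,2^{-jpc_p}.$$
The assumption $\al>\max(0,1/p-1/2)$ forces $c_p>0$ and, since it is equivalent to $p>2/(1+2\al)$, also $pc_p=p(\al+1/2)-1>0$. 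Equating the two bounds produces $2^{j^\star}\sim\bigl(R/(t\sqrt{R'})\bigr)^{2/(1+2\al)}$ and their common value at $j^\star$ is of order $t^{4\al/(1+2\al)}(C_\psi R')^{2\al/(1+2\al)}R^{2/(1+2\al)}$. Summing the first (increasing) bound for $j\le j^\star$ and the second (decreasing) bound for $j>j^\star$ gives two convergent geometric series and produces
$$\sum_\la\be_\la^2\indic_{|\be_\la|\le \si_\la t}\le C(R,R',\al,p,\psi)\,t^{4\al/(1+2\al)},$$
which is precisely the membership $f\in W_\al(R'')$ with $R''$ independent of $t$.

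For the adaptive-minimax statement, the left side of~\eqref{sural} is nondecreasing in $\al$ (for $c\ge 1$) and its right side is nonincreasing in $p$, so assumption~\eqref{adapt*} forces~\eqref{sural} to hold at every $(\al,p)$ in the prescribed class. The rate estimate~\eqref{risqueBesov} therefore applies with the same estimator $\tilde f_\gamma$ (same $c$, any fixed $\ga>c$), giving rate $\rho_\al$ on each ${\cal B}^{\al}_{p,q}(R)\cap {\cal L}_{1,2,\infty}(R')$. The matching minimax lower bound $n^{-\al/(1+2\al)}$ is inherited from the sub-class of compactly supported functions in ${\cal B}^\al_{p,q}(R)\cap {\cal L}_{1,2,\infty}(R')$, a bound that is classical in the Poisson-intensity (or equivalent density-estimation) framework. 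Hence $\tilde f_\gamma$ is adaptive minimax up to a $\ln n$ factor. The main obstacle of the whole argument is the balancing step in the $W_\al$ embedding, where the two bounds must be dovetailed so as to bring out precisely the exponent $4\al/(1+2\al)$ of the weak Besov space, simultaneously exploiting the $\L_1$ constraint (through $\sum_k\si_{j,k}^2$) and the $\L_\infty$ constraint (through $\sup_k\si_{j,k}^2$) on $f$.
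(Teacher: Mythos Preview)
Your proof is correct and follows essentially the same route as the paper's own argument: reduce the rate bound to the maxiset characterisation of Theorem~\ref{maxisets}, obtain the Besov embedding ${\cal B}^\al_{p,q}\subset{\cal B}^{\al/(c(1+2\al))}_{2,\infty}$ from~\eqref{inclusion} via condition~\eqref{sural}, and prove the $W_\al$ embedding by bounding $\sum_k\be_{j,k}^2\indic_{|\be_{j,k}|\le\si_{j,k}t}$ in two ways (using $\|f\|_1$ at low scales, $\|f\|_\infty$ and the Besov norm at high scales) and balancing at an optimal $j^\star$; the lower bound and the adaptivity statement are handled exactly as in the paper. Your monotonicity justification that~\eqref{adapt*} propagates~\eqref{sural} to all $(\al,p)$ with $\al\ge\al^*$, $p\ge p^*$ is slightly more explicit than the paper's one-line remark, but the content is identical.
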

This result points out the minimax rate associated 
 with ${\cal B}^{\al}_{p,q}(R)\cap{\cal L}_{1,2,\infty}(R')$  up
to a logarithmic term and in addition  proves that it is of the same order
as  in  the  equivalent  estimation  problem  on $[0,1]$  (see
\cite{djkp}). It means that, roughly speaking, it is not harder
to estimate sparse  non-compactly supported  functions than  sparse compactly
supported functions from the minimax point of view. In addition, the procedure
$\tilde f_\gamma$ does the job up to a logarithmic term.  When $p>2$ (i.e., when dense functions are
considered),  this conclusion  does  not  remain true. 
%%%%%%%%%%%%%%%%%%%%%%%%%%%%%%%%%%%%%%%%%%%%%%%%%%%%%%%%%%%%%%%%%%%%%%%%
\subsubsection{Minimax estimation on  Besov spaces  ${\cal B}^\al_{p,q}$ when $p> 2$}\label{nc-dense}
Before considering the case of estimation of non-compactly
supported functions, let us establish the
following result. We denote ${\cal K}$ the set of
compact  sets of $\R$  containing a  non-empty interval.  We define  for $K\in
{\cal K}$, ${\cal B}^\al_{p,q,K}(R)$ the set of functions supported by
$K$ and belonging to ${\cal B}^\al_{p,q}(R)$.
\begin{Cor}
We  assume  that
assumptions of Theorem \ref{minimaxnoncompact} are true. 
%For  any $K\in  {\cal K}$,  we define  ${\cal B}^\al_{p,q,K}(R)$  the  set of
%functions supported by $K$ and belonging to ${\cal B}^\al_{p,q}(R)$. 
For any $p\geq 1$, $\tilde f_\gamma$ achieves the
rate $\rho_{\al}$ on ${\cal B}^{\al}_{p,q,K}(R)\cap{\cal L}_{1,2,\infty}(R')$.\\
Furthermore, $\tilde f_\gamma$ is adaptive minimax up to a logarithmic term on  $$\left\{{\cal
B}^{\al}_{p,q,K}\cap{\cal     L}_{1,2,\infty}:\quad    \al^*\leq\al<r+1,\
p^*\leq p\leq \infty ,\ 1\leq
q\leq\infty, K\in{\cal K}\right\},$$
where  $\al^*$ and  $p^*$ satisfy  (\ref{adapt*}). 
\end{Cor}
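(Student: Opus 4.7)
The plan is to reduce the corollary to Theorem \ref{minimaxnoncompact} by exploiting the compactness of $K$ via an embedding at the level of wavelet coefficients, and then to invoke the classical minimax lower bound for compactly supported Poisson intensities.

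For $1\leq p\leq 2$ the upper bound is immediate from Theorem \ref{minimaxnoncompact} since ${\cal B}^\al_{p,q,K}(R)\subset{\cal B}^\al_{p,q}(R)$, so the case of interest is $p>2$. For $f$ supported in the fixed compact $K$, the compact support of $\psi$ implies that $\be_{j,k}$ vanishes except for indices $k$ whose dyadic interval meets $K$; writing $M_j=\#\{k\in\Z:\be_{j,k}\neq 0\}$ one has
$$M_j\leq c_K\,2^j+c'_K$$
for constants $c_K,c'_K$ depending only on $K$ and on the basis. H\"older's inequality with exponents $p/2$ and $p/(p-2)$ applied to this finite sum yields
$$\|(\be_{j,k})_k\|_{\ell_2}\leq M_j^{1/2-1/p}\|(\be_{j,k})_k\|_{\ell_p}\leq C_K\,2^{j(1/2-1/p)}\|(\be_{j,k})_k\|_{\ell_p}.$$
Multiplying by $2^{j\al}$ and taking the $\ell_q$ norm in $j$ gives the key embedding
$${\cal B}^\al_{p,q,K}(R)\subset {\cal B}^\al_{2,q}(R'')$$
for some $R''=R''(R,K,p,q,\p)$. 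Since condition (\ref{sural}) is automatic for the index $2$, Theorem \ref{minimaxnoncompact} applied with the new index $p'=2$ delivers (\ref{risqueBesov}), which is the upper bound of the corollary with a constant additionally depending on $K$.

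For the adaptive minimax claim, a matching lower bound of order $n^{-\al/(1+2\al)}$ on the minimax risk over ${\cal B}^\al_{p,q,K}(R)\cap{\cal L}_{1,2,\infty}(R')$ is needed. This lower bound is classical for compactly supported densities (see \cite{djkp}) and has been transposed to the Poisson intensity setting in \cite{ptrfpois}; the construction relies on Assouad's lemma applied to a family of well-separated hypotheses built by perturbing a constant base intensity on $K$ by rescaled wavelets, and it carries over unchanged here. Since $\tilde f_\gamma$ and its choice of $\Gamma_n$ do not depend on $(\al,p,q,K)$, combining the upper bound obtained above with this lower bound yields the adaptive minimax property on the stated family.

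The only mildly non-routine step is the H\"older embedding at the start, and the main point to be careful about is tracking the dependence of the constant $C_K$ on the parameters $(p,q,K)$; since each individual minimax bound is allowed to come with its own constant, this causes no difficulty for the adaptivity statement.
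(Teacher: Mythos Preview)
Your argument is correct and follows the same route as the paper: for $p>2$ you embed ${\cal B}^\al_{p,q,K}$ into a Besov space with second index $2$ by exploiting that only $O(2^j)$ wavelet coefficients survive at level $j$ for compactly supported $f$, and then invoke Theorem~\ref{minimaxnoncompact}. The paper states this embedding in one line as ${\cal B}^{\al}_{p,q,K}(R)\subset{\cal B}^\al_{p,\infty,K}(R)\subset {\cal B}^\al_{2,\infty,K}(\tilde R)$ without spelling out the H\"older step, and relies on the lower bound already recorded inside the proof of Theorem~\ref{minimaxnoncompact}; your version is slightly more explicit (and lands in ${\cal B}^\al_{2,q}$ rather than ${\cal B}^\al_{2,\infty}$), but the substance is identical.
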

To prove this corollary, it is enough to apply Theorem \ref{minimaxnoncompact}
and to note that 
${\cal   B}^{\al}_{p,q,K}(R)\subset{\cal  B}^\al_{p,\infty,K}(R)\subset  {\cal
B}^\al_{2,\infty,K}(\tilde R)$ for $\tilde R$ large enough when $p>2$.\\ 

When non-compactly supported functions are considered, this result is not true
and we can prove the following theorem. 
\begin{Th}\label{contrexem}
Let $p>2$ and $\al>0$.
There exists a positive function $f$ such that
$$       f\in       \mathbb{L}_1\cap   \mathbb{L}_2\cap     \mathbb{L}_{\infty}\cap       {\cal
B}^{\al}_{p,\infty} \mbox{ and } f\notin W_\al,$$
where the function spaces are viewed as sequential spaces (the Haar basis is used).
\end{Th}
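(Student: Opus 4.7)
The plan is to exhibit $f$ explicitly as a superposition of disjoint positive rectangular bumps indexed by $(j,m)$ with $j\ge 0$ and $1\le m\le N_j$, tuned by two exponents $\mu,\nu$ fixed only at the end. For each $j\ge 0$ I would pick $N_j:=\lfloor 2^{j\nu}\rfloor$ distinct positive integers $n_j^{(1)},\ldots,n_j^{(N_j)}$, chosen all distinct across $(j,m)$, and set
$$
f(x)=\sum_{j\ge 0}\sum_{m=1}^{N_j} a_j\,\mathbf{1}_{[n_j^{(m)},\,n_j^{(m)}+2^{-j}]}(x),\qquad a_j=2^{-j\mu}.
$$
Positivity is automatic; the disjointness of the bumps and their confinement to distinct unit integer intervals make $\|f\|_\infty$, $\|f\|_1$ and $\|f\|_2$ reduce to explicit geometric series in $j$.

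I would then compute every Haar coefficient of $f$. The scaling coefficient at position $n_j^{(m)}$ is $a_j 2^{-j}$. The mean-zero property of $\psi$ together with the dyadic nature of the bump kills every wavelet contribution at scales $j'\ge j$; at each coarser scale $j'<j$ exactly one $k'$ sees the bump in the left half of its support, producing $\beta_{j',k'}=a_j 2^{j'/2-j}$. A direct integration yields the variances $\sigma^2=a_j 2^{-j}$ for the scaling coefficient and $\sigma^2=a_j 2^{j'-j}$ for each off-diagonal wavelet coefficient. The key simplification is that every coefficient attached to a level-$j$ bump has the same ratio $|\beta_\lambda|/\sigma_\lambda=\sqrt{a_j 2^{-j}}$.

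Next I would convert each membership into a constraint on $(\mu,\nu)$. $\L_\infty$ only demands $\mu\ge 0$; $\L_1$ gives $\nu<\mu+1$ and $\L_2$ gives the weaker $\nu<2\mu+1$. At a fixed wavelet level $j$, summing the tails across bump levels $\tilde j>j$ produces
$$
\|(\beta_{j,k})_k\|_{\ell_p}^{p}=2^{jp/2}\sum_{\tilde j>j}2^{\tilde j(\nu-p\mu-p)}\approx 2^{j(\nu-p\mu-p/2)},
$$
and $f\in\mathcal{B}^\alpha_{p,\infty}$ becomes $\nu\le p\mu-p\alpha+1$. Since the threshold ratio is constant across all coefficients of a fixed bump level, the $W_\alpha$ sum collapses to
$$
S(t)=\sum_{j\ge j_0(t)} N_j a_j^2\,2^{-j},\qquad j_0(t)\approx\frac{-2\log_2 t}{\mu+1},
$$
yielding $S(t)\approx t^{2A/(\mu+1)}$ with $A=2\mu+1-\nu$; the failure $f\notin W_\alpha$ thus reduces to the lower bound $\nu>(2\mu(1+\alpha)+1)/(1+2\alpha)$.

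Finally I would verify that the system
$$
0\le\mu,\qquad \frac{2\mu(1+\alpha)+1}{1+2\alpha}<\nu<\min\bigl(\mu+1,\;p\mu-p\alpha+1\bigr)
$$
is solvable whenever $p>2$ and $\alpha>0$. Eliminating $\nu$ reduces the two compatibility conditions to $\mu<2\alpha$ (from the $\L_1$ upper bound) and $\mu>\alpha D/(D-2\alpha)$ with $D:=p(1+2\alpha)-2$ (from the Besov upper bound); these cut out a non-empty interval in $\mu$ exactly when $D>4\alpha$, i.e.\ when $p>2$, and any admissible $(\mu,\nu)$ supplies the counterexample. I expect the main obstacle to be the careful bookkeeping in the third step: showing cleanly that the level-$j$ Besov $\ell_p$ contribution is governed by the geometric tail over all finer bump levels $\tilde j>j$ rather than by one dominant level, and justifying that the shared $|\beta_\lambda|/\sigma_\lambda$ identity across scaling and off-diagonal wavelet coefficients really permits $S(t)$ to collapse to a single geometric series in $j$.
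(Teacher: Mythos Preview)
Your argument is correct. The construction you use is a genuine variant of the paper's: the paper places its level-$l$ bumps on \emph{left halves} of dyadic intervals, $I^+_{l,m}=[m2^{-l},(m+\tfrac12)2^{-l}]$, which forces the Haar wavelet coefficients to be supported on the diagonal $j=l$ only; you instead place full dyadic bumps $[n,n+2^{-j}]$, which kills the diagonal and produces the lower-triangular pattern $\beta_{j',k'}=a_j 2^{j'/2-j}$ for all $j'<j$. Your choice buys the clean identity $|\beta_\lambda|/\sigma_\lambda=\sqrt{a_j 2^{-j}}$ uniformly over every coefficient attached to a level-$j$ bump, so the $W_\alpha$ sum collapses by Parseval to exactly $\sum_{j\ge j_0(t)}N_j a_j^2 2^{-j}$ without any asymptotic bookkeeping; the paper's diagonal construction instead makes the Besov $\ell_p$ sum at level $j$ a single term rather than a geometric tail, which is the complementary simplification. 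Strikingly, under the dictionary $n_*\leftrightarrow\nu$, $f_*\leftrightarrow\mu+1$, the three constraint inequalities you derive (\,$\nu<\mu+1$, $\nu\le p\mu-p\alpha+1$, $\nu>(2\mu(1+\alpha)+1)/(1+2\alpha)$\,) coincide \emph{verbatim} with the paper's system (\ref{ce1})--(\ref{ce4}), so the final feasibility argument is literally the same. The geometric tail in your Besov step is harmless: since $\nu<\mu+1<p(\mu+1)$, the sum $\sum_{\tilde j>j'}2^{\tilde j(\nu-p\mu-p)}$ is a convergent geometric series dominated by its first term, which is the estimate you wrote.
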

\begin{Remark}
 This result is established by using the Haar basis. We conjecture that it remains true for more general biorthogonal wavelet bases.
\end{Remark}
This result  proves that $\tilde  f_\gamma$ does not  achieve the rate  $\rho_{\al}$ on
$ {\cal
B}^{\al}_{p,\infty}$  when $p>2$,  showing that  minimax statements  of Section
\ref{nc-sparse} are not valid in this setting. As said previously, it seems to us that minimax rates and adaptive
minimax rates are unknown  for $ {\cal B}^{\al}_{p,\infty}$, when $2<p<\infty$
even if Donoho {\it et al.}
\cite{djkp} provided some lower bounds in the density framework. For the case $p=\infty$, see \cite{jll}. 
\\\\
Now,  let  us   investigate  the  rate  achieved  by   $\tilde  f_\gamma$  on  ${\cal
B}^\al_{p,q}(R)$ when $p>2$. 
\begin{Th}\label{vitesse}
Let $R,R'>0$, $1\leq
q\leq \infty$, $2<
p\leq \infty$ and $\al\in\R$ such that $1/(2p)<\al<r+1$. 
Let us define for any $n$, $j_0=j_0(n)$ the integer such
that $$2^{j_0}\leq n^c(\ln n)^{-c}<2^{j_0+1},$$ with $c\geq 1$.Then,  $\tilde f_\gamma=(\tilde f_{n,\gamma})_n$ defined with 
$$\Gamma_n=\left\{\la=(j,k)\in\Lambda:\quad  j\leq j_0\right\}$$ and  $\ga >c$
achieves the following performance. For any $n$,
\begin{equation}\label{risqueBesovbis}
\sup_{f\in {\cal B}^{\al}_{p,q}(R)\cap {\cal L}_{1,2,\infty}(R')}\E(\normp{\tilde{f}_{n,\gamma}-f}^2)\leq
C(\ga,c,R,R',\al,p,\p)\left(\frac{\ln n}{n}\right)^{\frac{\al}{1+\al-\frac{1}{2p}}}. \nonumber
\end{equation}
where $C(\ga,c,R,R',\al,p,\p)$ depends on $R'$, $\ga$, $c$, $c'$, on the parameters of the Besov ball and on the
basis.
\end{Th}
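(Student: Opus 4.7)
The plan is to apply Theorem \ref{inegoraclelavraie} and control the two resulting sums via a careful combination of Besov structure, $\L_\infty$ boundedness, and $\L_1$ integrability of $f$. Since $f\in\L_\infty(R')$, one has $V_{\la,n}\le R'\|\psi\|_2^2/n$, so the threshold contribution at each $\la$ is at most $t_n^2:=C(R')\ln n/n$. The novelty compared with the compact support case (Theorem~\ref{minimaxnoncompact}) is that for $p>2$ the direct embedding ${\cal B}^\alpha_{p,q}\hookrightarrow{\cal B}^{\alpha'}_{2,\infty}$ fails on $\R$, so one must replace it with a Hölder-type interpolation that uses the $\L_1$ information.

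The first key step is to establish a per-level $\ell_2$ bound on the wavelet coefficients. From Besov membership one has $\|\beta_{j,\cdot}\|_p\le R\,2^{-j(\alpha+1/2-1/p)}$, and from $f\in\L_1$ together with compact support of $\psi$ one gets $\|\beta_{j,\cdot}\|_1\le C_\psi 2^{j/2}\|f\|_1$. Hölder's inequality for sequences in the form $\|x\|_2^2\le \|x\|_p^{p/(p-1)}\|x\|_1^{(p-2)/(p-1)}$ (interpolation between $\ell_p$ and $\ell_1$) then yields, after simplification,
$$\sum_k\beta_{j,k}^2\le C\,2^{-jp\alpha/(p-1)}.$$
Summing over $j>j_0$ gives the bias bound $\sum_{\la\notin\Gamma_n}\beta_\la^2\lesssim (\ln n/n)^{cp\alpha/(p-1)}$, which (one checks) is at most $(\ln n/n)^{\alpha/(1+\alpha-1/(2p))}$ for any $c\ge 1$, by comparing exponents directly.

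For the oracle-type sum, the plan is to exploit the genuine variances $V_{\la,n}=\sigma_\la^2/n$ rather than their uniform upper bound. Using the elementary $\min(u,v)\le u^{a}v^{1-a}$ for $a\in[0,1]$ and Hölder's inequality at each level,
$$\s
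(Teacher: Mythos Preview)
Your approach is correct in spirit and the parts you have written out are valid; in fact your key per-level bound is \emph{sharper} than the paper's. Your interpolation $\|\beta_{j,\cdot}\|_2^2\le\|\beta_{j,\cdot}\|_p^{p/(p-1)}\|\beta_{j,\cdot}\|_1^{(p-2)/(p-1)}$ combined with $\|\beta_{j,\cdot}\|_1\le C2^{j/2}\|f\|_1$ gives $\sum_k\beta_{j,k}^2\le C\,2^{-jp\alpha/(p-1)}$, whereas the paper (using $\sup_k|\beta_{j,k}|\le C2^{-j/2}$ from $\L_\infty$ instead of a second use of $\L_1$) only obtains $\sum_k\beta_{j,k}^2\le C\,2^{-j\alpha}$. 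Your bias bound and the exponent comparison are correct.

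The paper handles the oracle sum differently: it bounds the \emph{truncated} level sums $\sum_k\beta_{j,k}^2\indic_{|\beta_{j,k}|\le\sigma_{j,k}t}$ by H\"older between $\ell_p$ and its conjugate $\ell_r$, using $|\beta_{j,k}|^{r-1}\le(t\sigma_{j,k})^{r-1}$ on the indicator set, then $\sigma_{j,k}\le C$ (from $\L_\infty$) and $\|\beta_{j,\cdot}\|_1\le C2^{j/2}$; this yields $C\,2^{-j(\alpha-1/(2p))}t^{1/p}$. Balancing against the coarse-scale variance contribution $C\,2^{\tilde J}t^2$ gives the stated exponent $\alpha/(1+\alpha-1/(2p))$.

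One caution about your stated plan for the oracle sum: the device $\min(u,v)\le u^a v^{1-a}$ with a \emph{single} global $a$ will lose an arbitrarily small $\varepsilon$ in the exponent, because after H\"older (with conjugate pair $1/a,\,1/(1-a)$) the level-$j$ contribution becomes $C\,2^{j(1-a-ap\alpha/(p-1))}$, which is summable only for $a$ strictly above the critical value $(p-1)/(p-1+p\alpha)$. Since your interpolation is already strictly sharper than what the theorem requires, this $\varepsilon$-loss is harmless here; but the clean way to finish is a level split at $\tilde J$: bound by $V_{\la,n}\ln n$ for $j<\tilde J$ (where $\sum_k\sigma_{j,k}^2\le C\,2^j\|f\|_1$ via $\sigma_\la^2\le C2^jF_\la$ and $\sum_kF_{j,k}\le C\|f\|_1$) and by your $\ell_2$ bound for $j\ge\tilde J$, then optimize. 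This in fact delivers the strictly better exponent $p\alpha/(p-1+p\alpha)$, which implies the theorem.
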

Note that when $p=\infty$, the risk is bounded by
$\left(\frac{\ln n}{n}\right)^{\frac{\al}{1+\al}}$ up  to a constant, which is
the rate of the minimax risk on ${\cal B}^{\al}_{\infty,\infty}(R)$ up to a logarithmic term in the density estimation
setting      (see      Theorem       1      of      \cite{jll}).      However,
$\frac{\al}{1+\al-\frac{1}{2p}}\stackrel{p\to
2}{\longrightarrow}\frac{\al}{\al+\frac{3}{4}}$      and      $\left(\frac{\ln
n}{n}\right)^{\frac{\al}{\al+\frac{3}{4}}}>>\rho_{n,\al}^2$.                  So,
$\tilde{f}_\gamma$  is probably  not adaptive  minimax on  the whole  class of
Besov spaces.
%We did not compute  the minimax risk for more general Besov  spaces but it is
%more likely that we are not adaptive minimax when $\infty>p>2$. 
However, we establish that
our procedure is adaptive minimax (with the exact power of the
logarithmic factor) over weak Besov spaces without any support assumption. 
%%%%%%%%%%%%%%%%%%%%%%%%%%%%%%%%%%%%%%%%%%%%%%%%%%%%
\subsubsection{Minimax estimation on $W_\al$ and
adaptation with respect to $\al$}
We  investigate  in  this  section  a  lower bound  for  the  minimax  risk  on
$W_\al(R)\cap{\cal       B}^{\frac{\al}{1+2\al}}_{2,\infty}(R')\cap      {\cal
L}_{1,2,\infty}(R'')$ for $R,R',R''>0$ viewed as sequence spaces for
the Haar basis and we set
$${\cal R}(W_\al(R)\cap{\cal       B}^{\frac{\al}{1+2\al}}_{2,\infty}(R')\cap      {\cal
L}_{1,2,\infty}(R''))=\inf_{\hat f}\sup_{f\in W_\al(R)\cap{\cal B}^{\frac{\al}{1+2\al}}_{2,\infty}(R')\cap {\cal
L}_{1,2,\infty}(R'')}\E(\normp{\hat f-f}^2).$$
\begin{Th}\label{minimaxsurmaxiset}
For $\al>0$, we have
$$\liminf_{n\to  \infty}  \rho_{n,\al}^{-2} 
{\cal R}(W_\al(R)\cap{\cal       B}^{\frac{\al}{1+2\al}}_{2,\infty}(R')\cap      {\cal
L}_{1,2,\infty}(R''))
\geq
c(\al) R^{\frac{2}{1+2\alpha}},$$
where  $c(\al)$ depends  only on  $\al$, as  soon as  $R''\geq 1$  and $R'\geq
R^{\frac{1}{1+2\alpha}}\geq 1$. 
\end{Th}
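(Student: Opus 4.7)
The plan is to establish the lower bound by a block-wise Fano argument applied to a Haar-wavelet hypothesis family at a single critical resolution. Let $j=j(n)$ be such that $2^j\asymp n/\ln n$ and let $K=K(n)\asymp R^{2/(1+2\al)}(n/\ln n)^{1/(1+2\al)}$. Partition $\{0,\dots,2^j-1\}$ into $K$ consecutive blocks $B_1,\dots,B_K$ of common size $M=2^j/K\asymp(n/\ln n)^{2\al/(1+2\al)}$. For each $\vec a=(a_1,\dots,a_K)\in B_1\times\cdots\times B_K$, set
\[
f_{\vec a}=c_0\,\indic_{[0,1]}+\delta\sum_{i=1}^K \psi_{j,a_i},\qquad \delta=c_1\sqrt{\ln n/n},
\]
with $0<c_1<c_0\leq 1/2$ absolute constants tuned in terms of $\al$. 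Since $\|\psi_{j,k}\|_\infty=2^{j/2}$, one has $\delta\,2^{j/2}\asymp c_1$, so the choice $c_0>c_1$ guarantees $f_{\vec a}\geq c_0-c_1>0$ on $[0,1]$.

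A direct verification shows $f_{\vec a}\in W_\al(R)\cap {\cal B}^{\al/(1+2\al)}_{2,\infty}(R')\cap {\cal L}_{1,2,\infty}(R'')$ for every $\vec a$: the sup-, $\L_1$- and $\L_2$-norms of $f_{\vec a}$ all stay below $1\leq R''$; the Besov seminorm at scale $j$ equals $2^{j\al/(1+2\al)}\sqrt K\,\delta$, which with our choice of parameters is of order $R^{1/(1+2\al)}\leq R'$; finally $\si_\la^2\asymp c_0$ uniformly in $\la$, so the supremum in the weak Besov norm is attained at $t\asymp\delta/\sqrt{c_0}$ and the constraint $K\delta^2\leq R^{2/(1+2\al)}(\delta/\sqrt{c_0})^{4\al/(1+2\al)}$ is met upon choosing the implicit constant in $K$ small enough in terms of $c_0,c_1,\al$.

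For the core lower bound, fix an arbitrary estimator $\hat f$, expand it in the Haar basis, and set $\hat a_i=\arg\max_{k\in B_i}|\hat\be_{j,k}|$. A short case analysis on whether $|\hat\be_{j,\hat a_i}|\geq\delta/2$ combined with Parseval gives
\[
\normp{\hat f-f_{\vec a}}^2\geq\frac{\delta^2}{4}\sum_{i=1}^K \indic_{\hat a_i\neq a_i}.
\]
Averaging over the uniform prior on $\vec a\in B_1\times\cdots\times B_K$,
\[
\sup_{\vec a}\E_{\vec a}\normp{\hat f-f_{\vec a}}^2\geq\frac{\delta^2}{4}\sum_{i=1}^K\bar\P(\hat a_i\neq a_i).
\]
For each $i$, independence of the coordinates of $\vec a$ yields the information inequality $I(a_i;N)\leq I(a_i;N\mid(a_\ell)_{\ell\neq i})$, and the latter is bounded by the maximal KL divergence between two Poisson laws with intensities that differ only in coordinate $i$. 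Using the classical bound $KL(P_f\|P_g)\leq n\int(f-g)^2/g\,dx$ together with the disjoint supports of the Haar wavelets at scale $j$ and the pointwise lower bound $f_{\vec a'}\geq c_0-c_1$, this KL is at most $2c_1^2\ln n/(c_0-c_1)$, while $\ln M\sim\frac{2\al}{1+2\al}\ln n$. Fano's lemma then gives $\bar\P(\hat a_i\neq a_i)\geq 1/2$ for every $i$, provided $c_1$ is chosen small enough in terms of $\al$. Summing over $i$ yields $\sup_{\vec a}\E_{\vec a}\normp{\hat f-f_{\vec a}}^2\geq K\delta^2/8=c(\al)R^{2/(1+2\al)}\rho_{n,\al}^2$ for all $n$ large enough, which implies the stated liminf bound.

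The main obstacle is the simultaneous saturation of the four membership constraints by a single family: the weak Besov constraint $W_\al(R)$ dictates the exact power $R^{2/(1+2\al)}$ by fixing $K\delta^2\asymp R^{2/(1+2\al)}\rho_{n,\al}^2$, while the amplitude $\delta\asymp\sqrt{\ln n/n}$ is forced by the Fano balance condition KL $\lesssim\ln M\asymp\ln n$; this bookkeeping is exactly what produces the hypothesis $R'\geq R^{1/(1+2\al)}$ and the positivity requirement. A secondary technical point is the fact that $\hat a_i$ depends on the whole observation and not only on the data in block $i$; this is neatly handled by the information inequality $I(a_i;N)\leq I(a_i;N\mid(a_\ell)_{\ell\neq i})$, which decouples the $K$ sub-problems into essentially independent one-out-of-$M$ Poisson tests.
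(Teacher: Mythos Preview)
Your argument is correct and lands on the same lower bound $c(\al)R^{2/(1+2\al)}\rho_{n,\al}^2$, but the combinatorial organisation differs from the paper's. Both constructions use the Haar basis at a single resolution $j$ with $2^j\asymp n/\ln n$, place $K\asymp D\asymp R^{2/(1+2\al)}(n/\ln n)^{1/(1+2\al)}$ spikes of amplitude $\asymp\sqrt{\ln n/n}$ on top of a flat baseline, check the same four membership constraints, and control the Kullback--Leibler divergence via the pointwise lower bound on the baseline. The difference is in how the hypotheses are indexed and how Fano is invoked: the paper parametrises by arbitrary $D$-subsets of $\{0,\dots,2^j-1\}$, extracts a well-separated subfamily via a Gilbert--Varshamov type packing (Lemma~8 of \cite{ptrfpois}) guaranteeing $|m\Delta m'|\geq\theta'D$ and $\ln|\mathcal M_{j,D}|\geq\sigma D\ln(2^j/D)$, and then applies a single global Fano; you instead impose a product structure (one spike per block), reduce to $K$ coordinate-wise $M$-ary tests, and decouple them through the chain rule $I(a_i;N)\leq I(a_i;N\mid(a_\ell)_{\ell\neq i})$. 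Your route avoids the external combinatorial lemma and makes the per-coordinate Fano balance ``KL $\lesssim \ln M$'' transparent, while the paper's route keeps the information-theoretic step to a single global application. Either way the arithmetic is identical: $K\delta^2\asymp R^{2/(1+2\al)}\rho_{n,\al}^2$, the Besov check forces $R'\geq R^{1/(1+2\al)}$, and the positivity and $\L_{1,2,\infty}$ checks fix the baseline height to a small $\al$-dependent constant.
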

Using Theorem \ref{maxisets}, we immediately deduce the following result. 
\begin{Cor}\label{adaptivemini}
The procedure  $\tilde{f}_\gamma$ defined in Theorem \ref{minimaxnoncompact}  with $c=-c'=1$ and
with $\gamma>1$  is minimax
on    $W_\al(R)\cap{\cal    B}^{\frac{\al}{1+2\al}}_{2,\infty}(R')\cap   {\cal
L}_{1,2,\infty}(R'')$ and is adaptive minimax on
$$\left\{
W_\al(R)\cap{\cal       B}^{\frac{\al}{1+2\al}}_{2,\infty}(R')\cap      {\cal
L}_{1,2,\infty}(R''):\quad\al>0, 1\leq R'', \ 1\leq R\leq R'\right\}.$$
\end{Cor}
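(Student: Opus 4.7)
The plan is to read the corollary off as an immediate consequence of Theorems \ref{maxisets} and \ref{minimaxsurmaxiset}, using the fact that the concrete procedure $\tilde f_\gamma$ produced in Theorem \ref{minimaxnoncompact} with $c=-c'=1$, $\gamma>1$ is constructed without any knowledge of $\al, R, R', R''$. Minimaxity on each fixed ball will follow from matching upper and lower bounds, and adaptation is then automatic.

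For the upper bound I would specialize Theorem \ref{maxisets} to $c=1$, $c'=-1$. The hypothesis $\ga > c$ becomes $\ga>1$, which is exactly the assumption of the corollary, and the regularity condition $0 < \al/(c(1+2\al)) < r+1$ reduces to $\al/(1+2\al) < r+1$, true for every $\al>0$ as soon as $r\geq 0$. Theorem \ref{maxisets} then yields
$$MS(\tilde f_\gamma, \rho_\al) := {\cal B}^{\al/(1+2\al)}_{2,\infty} \cap W_\al.$$
Unfolding the ``$:=$'' convention of Section \ref{maxiset} and setting $R_0 := \max(R,R',R'')$, I deduce the existence of $\tilde R = \tilde R(\al,R,R',R'')$ such that
$$W_\al(R) \cap {\cal B}^{\al/(1+2\al)}_{2,\infty}(R') \cap {\cal L}_{1,2,\infty}(R'') \subset MS(\tilde f_\gamma, \rho_\al)(\tilde R),$$
where I have used ${\cal L}_{1,2,\infty}(R'') \subset \L_1(R'')\cap\L_2(R'')$ to meet the $\L_1, \L_2$ containment built into the maxiset definition. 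Reading off the definition of the maxiset ball, this translates into the uniform bound
$$\sup_{f\in W_\al(R) \cap {\cal B}^{\al/(1+2\al)}_{2,\infty}(R') \cap {\cal L}_{1,2,\infty}(R'')} \E(\normp{\tilde f_{n,\gamma}-f}^2) \leq \tilde R^{2}\,\rho_{n,\al}^{2}$$
for every $n$.

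The matching lower bound is exactly the statement of Theorem \ref{minimaxsurmaxiset}, which holds under the prescribed conditions $R''\geq 1$ and $R'\geq R^{1/(1+2\al)}\geq 1$. Combining the two bounds shows that $\rho_\al$ is the minimax rate on the fixed ball and that $\tilde f_\gamma$ attains it, proving minimaxity. The adaptive minimax statement then requires no extra argument: the estimator $\tilde f_\gamma$ depends only on $\ga$ and the biorthogonal basis, not on $(\al,R,R',R'')$, so the very same procedure attains the minimax rate simultaneously on every element of the family, which is the definition of adaptive minimaxity. The only mildly technical point I would need to verify is that $\tilde R$ produced by the maxiset inclusion is genuinely finite for each admissible quadruple; this can be tracked from the proof of Theorem \ref{maxisets} through the constants $C_1, C_2$ of Theorem \ref{inegoraclelavraie}, but introduces no new difficulty since those constants are shown to be finite there.
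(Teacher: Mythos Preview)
Your approach is correct and essentially identical to the paper's: combine the upper bound from Theorem \ref{maxisets} (specialized to $c=-c'=1$) with the lower bound from Theorem \ref{minimaxsurmaxiset}, and observe that the estimator does not depend on $(\al,R,R',R'')$. The only point the paper makes explicit that you leave implicit is the verification that the corollary's hypothesis $1\leq R\leq R'$ implies the hypothesis $R'\geq R^{1/(1+2\al)}\geq 1$ of Theorem \ref{minimaxsurmaxiset}; this holds because $R\geq 1$ and $0<\frac{1}{1+2\al}<1$ give $1\leq R^{1/(1+2\al)}\leq R\leq R'$ for every $\al>0$, and it is worth stating since without it the lower bound is not guaranteed on the family described in the corollary.
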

\begin{Remark}
 These results are established for  the Haar basis. It
 is probably true for more general biorthogonal wavelet bases, but we
 were not able to prove it.
\end{Remark}
%%%%%%%%%%%%%%%%%%%%%%%%%%%%%%%%%%%%%%%%%%%
%%%%%%%%%%%%%%%%%%%%%%%%%%%%%%%%%%%%%%%%%%%
\section{How to choose the parameter $\ga$}\label{penaltyterm}
In this section, our goal is to  find lower and upper bounds for the parameter
$\ga$. The aim and proofs are inspired by Birg\'e and Massart \cite{bm} who considered penalized estimators and
calibrated  constants for  penalties  in a  Gaussian  regression framework.  In
particular, they showed that if the penalty constant is smaller than 1, then the
penalized estimator behaves in a quite unsatisfactory way. This study was used
in practice to derive adequate data-driven penalties by Lebarbier \cite{leb}. 

We  assume that  the function  $f$ to  be estimated
belongs to a  restricted functional space. More precisely,  we assume that for
$n$ large enough, $f$ belongs to ${\cal F}_n$ where for any $n$,
$${\cal    F}_{n}=\left\{f\in    \L_1\cap\L_2\cap\L_{\infty}:\quad   F_\la\geq
\frac{(\ln n)(\ln\ln n)}{n}1_{F_\la>0}, \ \forall\;\la\in\Lambda\right\},$$
with $F_\la=\int_{supp(\p_\la)} f(x)dx.$ Observe that 
${\cal F}_{n}$ only contains functions with finite support. If the Haar basis is considered, any function supported by $[0,1]$
that is constant on each interval  of a dyadic partition of $[0,1]$ belongs to
${\cal F}_n$ for $n$ large enough.  In
addition, the interest of the class ${\cal F}_n$ lies in the natural bridge it
constitutes between the model of this paper and the regression model for which
the number of non-zero coefficients is always bounded by $n$. 
%Moreover the definition  of ${\cal F}_{n}$ does not  imply that the functions
%have a known support : even if it implies that the support is finite, it is not localized at a particular range from the origin. 
These reasons
justify the  importance of well estimating  functions of ${\cal  F}_n$ with an
appropriate choice for $\gamma$. We naturally consider along this
section the Haar basis and we define for any $n$, $j_0=j_0(n)$ the integer
such that $2^{j_0}\leq  n<2^{j_0+1}$. Then $\tilde f_{n,\gamma}$ is defined with
$$\Gamma_n=\left\{\la=(j,k)\in\Lambda:\quad j\leq j_0\right\}.$$
In the sequel, we prove that, roughly speaking, $\tilde f_{n,\gamma}$ cannot achieve good performance from the oracle point of view if the parameter $\ga$ is smaller than $1$ or larger than $16$. %This result is reinforced by a simulation study in Section \ref{simu}.
%%%%%%%%%%%%%%%%%%%%%%%%%%%%%%%%%%%%%%%%%%%
\subsection{Lower bound for $\gamma$}
In this  section, we provide a lower  bound for the parameter  $\ga$. We
have the following result.
\begin{Th}\label{lower}
We estimate $f=\indic_{[0,1]}\in {\cal F}_n$ with $\tilde f_{n,\gamma}$ such that in view of (\ref{equivseuil}), we set
$$\forall\, \la\in\Gamma_n,\quad \eta_{\la,\gamma} = \sqrt{2\gamma \ln(n) \hat{V}_{\la,n}} +
\norm{\p_\la}_\infty\frac{\ln(n) u_n}{n}
,$$
with $(u_n)_n$ a deterministic bounded sequence.
Then for all $\e>0$, we obtain for any $n$,
$$\E(\normp{\tilde{f}_{n,\gamma}-f}^2)\geq \frac{1}{n^{\gamma+\e}}(1+o_n(1)).$$
\end{Th}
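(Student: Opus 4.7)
The plan is to expose, for the simple target $f = \indic_{[0,1]}$, a single well-chosen scale $j^*$ at which the thresholding estimator $\tilde{f}_{n,\gamma}$ generates many ``false alarms'' whose aggregate squared contribution is already of order $n^{-\gamma}$ up to logarithmic factors. In the Haar basis the only non-vanishing coefficient of $f$ is $\be_{-1,0}=1$, so for every $\la=(j,k)$ with $j\geq 0$ one has $\be_\la=0$ and, after discarding the non-negative contribution coming from the scaling level, the risk is bounded below by
\begin{equation*}
\E(\normp{\tilde{f}_{n,\gamma}-f}^2) \;\geq\; \sum_{j\geq 0}\sum_{k\in\Z}\E\bigl[\hb_{j,k}^2\,\indic_{|\hb_{j,k}|\geq \eta_{(j,k),\gamma}}\bigr].
\end{equation*}
I would restrict this sum to a single scale $j^*=j^*(n)$ with $2^{j^*}\sim n/(\ln n)^4$ and to the $2^{j^*}$ indices $k\in\{0,\dots,2^{j^*}-1\}$ whose Haar support lies in $[0,1]$. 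Denoting by $P_1^{(k)}$ and $P_2^{(k)}$ the counts of $N$ in the two halves of $\mathrm{supp}(\psi_{j^*,k})$, these are independent Poisson variables with common parameter $\mu=n/2^{j^*+1}\sim (\ln n)^4/2$; this places us deep in the moderate deviation regime since $\mu$ grows much faster than $\ln n$.

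A direct calculation gives $\hb_{j^*,k}=2^{j^*/2}(P_1^{(k)}-P_2^{(k)})/n$, $\hat V_{(j^*,k),n}=2^{j^*}(P_1^{(k)}+P_2^{(k)})/n^2$ and $\|\psi_{j^*,k}\|_\infty=2^{j^*/2}$, so the thresholding event rewrites as
\begin{equation*}
|P_1^{(k)}-P_2^{(k)}| \;\geq\; \sqrt{2\gamma\ln n\,(P_1^{(k)}+P_2^{(k)})} + (\ln n)\,u_n.
\end{equation*}
Conditionally on $P_1^{(k)}+P_2^{(k)}=m$ the variable $P_1^{(k)}-P_2^{(k)}=2P_1^{(k)}-m$ is the centered binomial associated with $\mathrm{Bin}(m,1/2)$. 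For every $m$ in a neighbourhood of $2\mu$ of total probability $1-o(1)$ (provided by a standard deviation inequality on $P_1^{(k)}+P_2^{(k)}$), a Stirling expansion of the binomial tail yields the sharp moderate deviation asymptotic
\begin{equation*}
\P\bigl(|2P_1^{(k)}-m|\geq \sqrt{2\gamma\ln n\,m}\;\big|\;P_1^{(k)}+P_2^{(k)}=m\bigr) \;=\; \frac{1+o(1)}{\sqrt{\pi\gamma\ln n}}\,n^{-\gamma},
\end{equation*}
and the additive term $(\ln n)\,u_n$ is negligible since $\sqrt{2\gamma\ln n\,m}\sim \sqrt{2\gamma}\,(\ln n)^{5/2}$. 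On this event, the inequality $\hb_{j^*,k}^2\geq 2\gamma\ln n\,\hat V_{(j^*,k),n}$ combined with $\hat V_{(j^*,k),n}\geq (1-o(1))/n$ gives a contribution of order $\frac{2\sqrt{\gamma\ln n/\pi}}{n^{\gamma+1}}(1+o(1))$ per index $k$.

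Summing the $2^{j^*}\sim n/(\ln n)^4$ such contributions (which add by linearity of expectation; the supports of the $\psi_{j^*,k}$ are in fact pairwise disjoint so no hidden cancellation can occur) produces a lower bound of order $n^{-\gamma}/(\ln n)^{7/2}$, which for every fixed $\e>0$ dominates $n^{-\gamma-\e}$ as $n\to\infty$, yielding the claimed inequality. The main obstacle will be establishing the sharp moderate deviation asymptotic for the centered binomial with the correct polynomial prefactor, uniformly in $m$ within an $o(\mu)$-neighbourhood of $2\mu$: a Cram\'er-type or Bennett--Bernstein bound would yield the right exponent but an insufficient prefactor, whereas losing the $(\ln n)^{-1/2}$ factor is not affordable in the final arithmetic. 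A Stirling-based expansion (equivalently a local central limit theorem with an Edgeworth correction) should deliver the required precision. The remaining ingredients---handling the Poisson randomness of $P_1^{(k)}+P_2^{(k)}$ and checking that the deterministic additive term of $\eta_{\la,\gamma}$ perturbs the exponent by only $o(1)$---are routine concentration arguments.
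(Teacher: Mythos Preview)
Your strategy coincides with the paper's: isolate one resolution level $j^*$ with $2^{j^*}\sim n/(\ln n)^c$, observe that for $f=\indic_{[0,1]}$ all the $\be_{j^*,k}$ vanish on $k\in\{0,\dots,2^{j^*}-1\}$, and lower-bound the risk by the aggregate ``false alarm'' contribution $\sum_k\E[\hb_{j^*,k}^2\indic_{|\hb_{j^*,k}|\geq\eta_{(j^*,k),\gamma}}]$, which is governed by a moderate-deviation estimate on the symmetric Poisson difference $N^+_{j^*,k}-N^-_{j^*,k}$. The execution differs: the paper does not condition on the total count nor appeal to a binomial tail asymptotic, but simply lower-bounds the expectation by the single point-mass event $\{N^+=\mu+\tfrac12\sqrt{v},\ N^-=\mu-\tfrac12\sqrt{v}\}$ (with $v$ chosen so this pair sits exactly on the threshold boundary) and applies Stirling's formula directly to the two Poisson probabilities. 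This avoids the uniformity-in-$m$ issue you flag as the main obstacle, at the cost of a cruder $e^{o(\ln n)}$ error in the exponent rather than your explicit $(\ln n)^{-7/2}$ factor.

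One correction to your self-assessment: your remark that ``losing the $(\ln n)^{-1/2}$ factor is not affordable in the final arithmetic'' is inaccurate. The statement only asks for $\E(\normp{\tilde f_{n,\gamma}-f}^2)\geq n^{-\gamma-\e}(1+o_n(1))$ for every fixed $\e>0$, so any polynomial-in-$\ln n$ loss is harmless. In particular you do not need the sharp prefactor $\tfrac{1}{\sqrt{\pi\gamma\ln n}}$; a cruder lower bound of the form $c\,n^{-\gamma}(\ln n)^{-A}$ for some constants $c,A>0$ would already suffice, which makes the moderate-deviation step less delicate than you suggest---and explains why the paper can get away with a single point-mass lower bound rather than a full tail asymptotic.
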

This result shows that we need $\gamma \geq 1$ to obtain a good
convergence rate. Indeed, for any $n$, Theorem \ref{inegoraclelavraie} (established with $\gamma>1$)  gives the bound 
$$\E(\normp{\tilde{f}_{n,\gamma}-f}^2)\leq C\frac{\ln n}{n},$$
where $C$ is a constant.
%%%%%%%%%%%%%%%%%%%%%%%%%%%%%%%%%%%%%%%%%%%
\subsection{Upper bound for $\gamma$}
In this  section, we provide an upper  bound for the parameter  $\ga$. In Remark \ref{uppga}, we have already noticed that the performances of $\tilde f_\gamma$ are worse when $\gamma$ increases. More justifications of this point are provided in this section. 
\begin{Th}\label{classFn}
Let $\ga =1+\sqrt{2}$ and let $\eta_{\la,\gamma}$ be as in Theorem \ref{inegoracle}. Then $\tilde f_{n,\gamma}$  achieves the following oracle inequality: for $n$ large enough, 
$$\sup_{f\in {\cal F}_n}\frac{\E(\normp{\tilde{f}_{n,\gamma}-f}^2)}{\sum_{\la \in \Ga_n}
\min(\be_\la^2, V_{\la,n})+\frac{1}{n}}\leq   12   \ln    n .$$
\end{Th}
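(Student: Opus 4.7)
The natural strategy is to apply Theorem~\ref{inegoracle} with an optimized choice of the auxiliary parameters $\e,p,q$, and then in the infimum over $m\subset\Ga_n$ appearing on the right-hand side, select the deterministic \emph{oracle} model $m^\ast = \{\la\in\Ga_n : \be_\la^2 > V_{\la,n}\}$. Setting $\ga = 1+\sqrt{2}$, fixing $\e<\sqrt{2}$ close to $\sqrt{2}$, and choosing H\"older conjugates $p,q$ with $q>1$ just above $1$ so that $\ga/q>1+\e$, and using $\E[(\hb_\la-\be_\la)^2]=V_{\la,n}$ together with the identities $\sum_{\la\notin m^\ast}\be_\la^2 = \sum_{\la\notin m^\ast}\min(\be_\la^2,V_{\la,n})$ and $\sum_{\la\in m^\ast}V_{\la,n} = \sum_{\la\in m^\ast}\min(\be_\la^2,V_{\la,n})$, one obtains
$$\frac{\e}{2+\e}\,\E(\normp{\tilde f_{n,\ga}-f}^2) \,\leq\, C_\e\sum_\la\min(\be_\la^2,V_{\la,n}) \,+\, \sum_{\la\in m^\ast}\E[\eta_{\la,\ga}^2] \,+\, R_n,$$
with $C_\e = \max(1+2/\e,\e)$ bounded and $R_n$ the remainder of Theorem~\ref{inegoracle}.

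The key step is to bound $\E[\eta_{\la,\ga}^2]$ using the structure of ${\cal F}_n$. For $\la$ with $F_\la = 0$ the contribution vanishes, since $\hb_\la = 0$ a.s.\ and $\be_\la = 0$. For $\la$ with $F_\la > 0$, the Haar identities $V_{\la,n} = 2^j F_\la/n$ and $\norm{\p_\la}_\infty^2 = 2^j$ combined with the class constraint $F_\la\geq (\ln n)(\ln\ln n)/n$ yield the critical ratio
$$\frac{\norm{\p_\la}_\infty^2/n^2}{V_{\la,n}} \,=\, \frac{1}{nF_\la} \,\leq\, \frac{1}{(\ln n)(\ln\ln n)}.$$
Expanding $\eta_{\la,\ga}^2$ and $\tilde V_{\la,n}$ via Young's inequality $(a+b)^2\leq (1+\theta)a^2+(1+\theta^{-1})b^2$, applying Jensen's inequality $\E\sqrt{\hat V_{\la,n}}\leq \sqrt{V_{\la,n}}$, and letting $\theta\to 0$, we obtain uniformly in $f\in{\cal F}_n$ that $\E[\eta_{\la,\ga}^2] \leq 2\ga\ln n\,V_{\la,n}\,(1+o(1))$. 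Since $V_{\la,n} = \min(\be_\la^2,V_{\la,n})$ on $m^\ast$, this contribution is at most $2\ga\ln n\,(1+o(1))\sum_\la\min(\be_\la^2,V_{\la,n})$.

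The value $\ga = 1+\sqrt{2}$ is dictated by the resulting constant: dividing by $\e/(2+\e)$ and letting $\e\uparrow\ga-1$, the leading coefficient on $\ln n\sum_\la\min(\be_\la^2,V_{\la,n})$ tends to $2\ga(\ga+1)/(\ga-1)$, a function of $\ga>1$ minimized at $\ga = 1+\sqrt{2}$ with value $2(1+\sqrt{2})^2 = 6+4\sqrt{2}<12$. For $n$ sufficiently large, the main term is thus bounded by $12\ln n\sum_\la\min(\be_\la^2,V_{\la,n})$.

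Finally, to absorb the remainder, note that at the coarsest level $j=-1$ every non-zero $F_k$ satisfies $F_k > 1/n$, so $\min(\be_{(-1,k)}^2,V_{(-1,k),n}) = F_k/n$ and
$$\sum_\la\min(\be_\la^2,V_{\la,n}) + \frac{1}{n} \,\geq\, \frac{\norm{f}_1 + 1}{n} \,\geq\, \frac{\max(\norm{f}_1,1)}{n}.$$
Taking $q$ slightly above $1$ makes the exponents $\ga/q$ and $\ga/(q(1+\e))$ both exceed $1$, so each term of $R_n$ is of order $\max(\norm{f}_1,1)\cdot n^{-1-\delta}\ln n$ for some $\delta>0$, hence negligible compared to $\ln n\cdot\max(\norm{f}_1,1)/n$. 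The main obstacle lies exactly here: $R_n$ depends on $\norm{f}_1$ and $\norm{f}_1^{1+1/q}$, yet $\norm{f}_1$ is not uniformly bounded on ${\cal F}_n$; the lower bound $F_\la\geq (\ln n)(\ln\ln n)/n$ is what forces the denominator to be at least of order $\norm{f}_1/n$, matching the growth of $R_n$, while its extra $\ln\ln n$ factor gives the slack needed for the deterministic part of $\eta_{\la,\ga}$ to vanish in the comparison of the second paragraph.
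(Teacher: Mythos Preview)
Your approach matches the paper's: apply the oracle-type bound (the paper works from the intermediate inequality~(\ref{init}) in the proof of Theorem~\ref{inegoraclelavraie}, which is exactly your Young--Jensen expansion of $\E[\eta_{\la,\ga}^2]$), take the oracle set $m^\ast=\{\la\in\Ga_n:\be_\la^2>V_{\la,n}\}$, and use the ${\cal F}_n$ constraint $nF_\la\geq(\ln n)(\ln\ln n)$ on $m^\ast$ to make the deterministic part of the threshold $o(V_{\la,n}\ln n)$. Your identification of $\ga=1+\sqrt{2}$ as the minimizer of $2\ga(\ga+1)/(\ga-1)$, with minimum $6+4\sqrt{2}<12$, nicely explains the paper's otherwise unmotivated choice; the paper instead fixes $\e=1.4$ and $\delta$ numerically and checks that the resulting coefficients $(6,\,3.4,\,11.8)$ combine to at most $12\ln n$.

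One caution on your limiting argument: you cannot literally send $\e\uparrow\ga-1$, since the constraint $\ga/q>1+\e$ then forces $q\downarrow 1$, hence $p\to\infty$, and the remainder $R_n$ in Theorem~\ref{inegoracle} carries a factor $p^2$. You must freeze $\e$ strictly below $\sqrt{2}$ (as the paper does with $\e=1.4$), obtain a leading constant strictly between $6+4\sqrt{2}$ and $12$, and only then let $n\to\infty$. Also, your claim that $R_n$ is of order $\max(\|f\|_1,1)\,n^{-1-\delta}\ln n$ understates the second term, which is of order $\|f\|_1^{1+1/q}$; after dividing by your lower bound $(\|f\|_1+1)/n$ a factor $\|f\|_1^{1/q}$ survives. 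The paper's own treatment of $C_2(\ga,\|f\|_1,\ldots)/n$ is equally terse on this point, so you are not diverging from it, but the uniformity over ${\cal F}_n$ in $\|f\|_1$ is not fully spelled out in either argument.
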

Now, let us assume that for a  choice of $\ga$, say $\ga_{\min}$, the corresponding threshold $\eta_{\la,\ga_{\min}}$ leads to satisfying
results (for instance, Theorem \ref{classFn} tells us that
$\ga=1+\sqrt{2}$  is  a  good choice).  Then  let  us  fix $\ga$  larger  than
$\ga_{\min}$ and  let us  consider the estimator  $\tilde f_{n,\gamma}$  associated with
the threshold $\eta_{\la,\gamma}$ as built in Theorem \ref{inegoracle}.  Our goal is to
obtain a lower bound of the maximal risk of $\tilde f_{n,\gamma}$ on ${\cal F}_n$ larger than the upper bound obtained
for $\eta_{\la, \ga_{\min}}$. This means that choosing $\gamma$ is a bad choice. This
goal is reached in the following theorem.
 \begin{Th}\label{uppth}
Let $\ga_{\min}>1$ be fixed and let $\ga >\ga_{\min}$.  We still consider the
thresholding rule associated with $\ga$ (see Theorem \ref{inegoracle}).
Then, 
$$\sup_{f\in {\cal F}_n}\frac{\E(\normp{\tilde{f}_{n,\gamma}-f}^2)}{\sum_{\la \in \Ga_n}
\min( \beta_\la^2, V_{\la,n})+\frac{1}{n}}\geq (\sqrt{\ga}-\sqrt{\ga_{\min}})^2 2\ln n(1+o_n(1)).$$
\end{Th}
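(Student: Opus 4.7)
The plan is to construct, for each $n$, an intensity $f_n\in\mathcal{F}_n$ whose non-zero Haar wavelet coefficients sit in the narrow gap between the ``$\ga_{\min}$-threshold'' (where they would just be detected) and the actual threshold $\eta_{\la,\ga}$ used by $\tilde f_{n,\ga}$, so that with probability tending to one the estimator misses them and pays $\sum_\la\be_\la^2$ in risk. Placing many such coefficients at the same scale is essential, otherwise the additive $1/n$ and the scaling-coefficient contribution in the denominator would spoil the extra $\ln n$-factor in the ratio.

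\emph{Construction.} Set $c=c_n=(\ln n)(\ln\ln n)$, pick $j^*=j^*_n$ with $N:=2^{j^*}\asymp n/(\ln n)^2$, and define
\[
f_n\;=\;c\,\indic_{[0,1]}\;+\;\sum_{k=0}^{N-1}\be_k\,\psi_{j^*,k},\qquad |\be_k|=(\sqrt{\ga}-\sqrt{\ga_{\min}})\sqrt{2\ln n\cdot V_{j^*,k,n}},
\]
with $V_{j^*,k,n}=c/n$. Direct checks give $|\be_k|\cdot 2^{j^*/2}\le c$ (hence $f_n\ge 0$), and every non-vanishing $F_\la$ for $\la\in\Ga_n$ is bounded below by $c/n\ge(\ln n)(\ln\ln n)/n$, so $f_n\in\mathcal{F}_n$ for $n$ large enough. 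The only non-zero coefficients are $\al_0=c$ and the $\be_k$, all with variance $V_{\la,n}=c/n$; for $n$ large, $\be_k^2>V_{j^*,k,n}$, so
\[
\sum_{\la\in\Ga_n}\min(\be_\la^2,V_{\la,n})+\tfrac{1}{n}\;=\;(N+1)\tfrac{c}{n}+\tfrac{1}{n}.
\]

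\emph{Miss probability.} Using the Poisson concentration of $\hat V_{j^*,k,n}$ around $V_{j^*,k,n}$, on an event of probability $1-o_n(1)$ one has $\eta_{j^*,k,\ga}-|\be_k|\ge\sqrt{2\ga_{\min}\ln n\cdot V_{j^*,k,n}}(1-o_n(1))$. The choice $N\asymp n/(\ln n)^2$ ensures that the jump contribution $\|\psi_{j^*,k}\|_\infty/n=2^{j^*/2}/n$ is negligible compared to $\sqrt{V_{j^*,k,n}/\ln n}$, so a Bennett inequality applied to the centered Poisson integral $\hat\be_{j^*,k}-\be_k$ gives
\[
\P\bigl(|\hat\be_{j^*,k}|\ge\eta_{j^*,k,\ga}\bigr)\;\le\;n^{-\ga_{\min}(1-o_n(1))}\;=\;o_n(1).
\]
Since $\be_\la=0$ for the remaining $\la\in\Ga_n$,
\[
\E\normp{\tilde f_{n,\ga}-f_n}^2\;\ge\;\sum_{k=0}^{N-1}\be_k^2\,\P\bigl(\tilde\be_{j^*,k}=0\bigr)\;\ge\;2N(\sqrt{\ga}-\sqrt{\ga_{\min}})^2\ln n\,\tfrac{c}{n}(1-o_n(1)).
\]

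Dividing by the oracle expression above gives a lower bound of $2(\sqrt{\ga}-\sqrt{\ga_{\min}})^2\ln n\cdot\frac{Nc}{(N+1)c+1}(1-o_n(1))$; since $Nc=n(\ln\ln n)/\ln n\to\infty$, the last fraction tends to $1$, yielding the claimed $2(\sqrt{\ga}-\sqrt{\ga_{\min}})^2\ln n(1+o_n(1))$. The main obstacle is the concentration step: the threshold $\eta_{\la,\ga}$ depends on the random $\tilde V_{\la,n}$, so one must propagate deviation estimates for $\hat V_{\la,n}$ finely enough to preserve the exact exponential rate $\ga_{\min}\ln n$ in the Bennett bound; any constant lost here would deteriorate the factor $(\sqrt{\ga}-\sqrt{\ga_{\min}})^2$ in the claimed lower bound.
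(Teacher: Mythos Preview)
Your construction and overall strategy coincide with the paper's: build $f=c\,\indic_{[0,1]}+\sum_{k}\be_k\psi_{j,k}$ at a single level with $|\be_k|=(\sqrt\ga-\sqrt{\ga_{\min}})\sqrt{2\ln n\,V_{\la,n}}$, show the oracle is the sum of the variances, show each coefficient is missed with probability $1-o_n(1)$, and divide. The paper takes $c=1$ and $2^{j}\asymp n/(\ln n)^{1+\al}$ (so $V_{\la,n}=1/n$); your choice $c_n=(\ln n)(\ln\ln n)$ and $2^{j^*}\asymp n/(\ln n)^2$ works equally well and is arguably more careful about the $\mathcal F_n$ membership.

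The only substantive difference is the handling of the miss probability, which you correctly identify as the delicate step. You control $\hat V_{\la,n}$ first and then apply Bennett to $\hat\be_\la-\be_\la$; this yields $\P(|\hat\be_\la|\ge\eta_{\la,\ga})\le n^{-\ga_{\min}(1+o_n(1))}+o_n(1)$, which suffices. The paper avoids this two-step concentration by a neat algebraic device: it compares the two \emph{random} thresholds directly, writing
\[
\P(|\hat\be_\la|<\eta_{\la,\ga})\ \ge\ \P(|\hat\be_\la-\be_\la|<\eta_{\la,\ga_{\min}})-\P(\eta_{\la,\ga_{\min}}+|\be_\la|\ge\eta_{\la,\ga}),
\]
and then shows, via $(a+b)^2\le(1+\e)a^2+(1+\e^{-1})b^2$ with $\e=\sqrt{\ga/\ga_{\min}}-1$, that the second event is contained in $\{V_{\la,n}>\tilde V_{\la,n}(\ga)\}$, of probability $\le n^{-\ga}$. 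This exploits that both $\eta_{\la,\ga}$ and $\eta_{\la,\ga_{\min}}$ are built from the \emph{same} random $\hat V_{\la,n}$, so no separate concentration of $\hat V_{\la,n}$ is needed and no constants are lost. Your route is correct but would benefit from this shortcut, which directly resolves the ``main obstacle'' you flag.
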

If we  choose $\ga_{\min}  = 1+\sqrt{2}$ and  apply Theorem  \ref{classFn}, the
maximal oracle ratio of the estimator $\tilde{f}_{n,\gamma}$ is not larger than $12 \ln n$.  So,  if
$\ga>16$,  which  yields  $(\sqrt{\ga}-\sqrt{\ga_{\min}})^2>6$,  the  resulting
maximal oracle ratio of $\tilde{f}_{n,\gamma}$
is larger than $12 \ln n$. In addition, note that the function used in Theorem
\ref{lower} is also in ${\cal F}_n$. So, finally the convenient value of $\gamma$
belongs to $[1,16]$.
%%%%%%%%%%%%%%%%%%%%%%%%%%%%%%%%%%%%%%%%%%%
%%%%%%%%%%%%%%%%%%%%%%%%%%%%%%%%%%%%%%%%%%%
\section{Simulations}\label{simu}
In this  section, some  simulations are provided  and the performances  of the
thresholding  rule are  measured from  the numerical  point of  view.  We also
discuss the ideal  choice for the parameter $\gamma$ keeping in mind that the
value $\gamma=1$ constitutes a border for the theoretical results (see Section
\ref{penaltyterm}). For these purposes, the procedure is performed for estimating various intensity
signals and the wavelet set-up associated with biorthogonal
wavelet bases is considered. More precisely, we focus either on the Haar basis where
$$\phi=\tilde\phi=\indic_{[0,1]},\quad
\psi=\tilde\psi=\indic_{[0,1/2]}-\indic_{]1/2,1]}$$or  on a  special  case of
spline systems given in Figure \ref{fig-basespline}. 
\begin{figure}[t]
\begin{center}
\includegraphics[width=0.7\linewidth,angle=0]{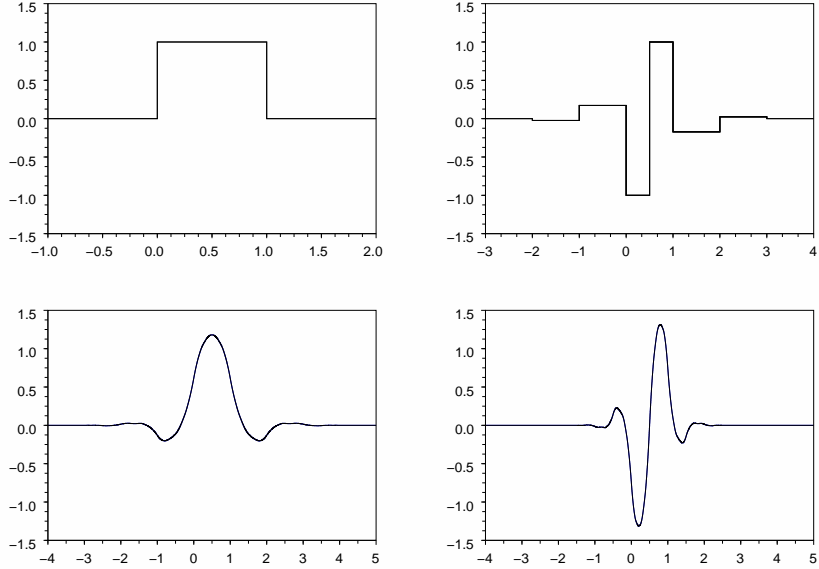}
\end{center}
\caption{The spline basis.  Top: $\phi$ and $\psi$, Bottom: $\tilde\phi$ and $\tilde\psi$}\label{fig-basespline}
\end{figure}
This latter basis, called hereafter the spline
basis,  has  the following  properties.  First,  the  support  of  $\phi$,  $\psi$,
$\tilde\phi$ and $\tilde\psi$ is included in $[-4,5]$. The reconstruction wavelets $\tilde\phi$ and $\tilde\psi$
belong to  $C^{1.272}$. Finally,  the wavelet  $\psi$ is  a piecewise
constant function orthogonal to polynomials of degree 4 (see \cite{don}). So, such
a basis has properties 1--5 required in Section 
\ref{biorthogonal} with $m=0.272$.  Then, the signal $f$ to be estimated is decomposed
as follows:
$$f=\sum_{\la\in\Lambda}\be_\la\tilde\p_\la=\sum_{k\in\Z}\be_{-1,k}\tilde\phi_{k}+\sum_{j\geq
0}\sum_{k\in\Z}\be_{j,k}\tilde\psi_{j,k}.$$
For  estimating $f$, we  use the  observations $(\hat\be_\la)_{\la\in\Lambda}$
associated with a Poisson process $N$ whose
intensity with respect to the Lebesgue measure is $n\times f$.
Since $\phi$ and $\psi$ are piecewise constant functions, accurate values of the observations are available, which allows to avoid many computational and approximation issues that often arise in the wavelet setting.
 To shed light on typical aspects of Poisson intensity estimation,
Figure \ref{fig-observations}
displays the reconstruction  obtained by using only the  coarsest noisy wavelet
coefficients of  a particular signal  (the density of a Gaussian
variable with mean 0.5 and standard deviation 0.25) with $n=4096$. We mean that $(\be_{j,k})_{j\geq -1,k\in\Z}$ is estimated
by $(\hat\be_{j,k})_{-1\leq j\leq 10,k\in\Z}$ without using thresholding. 
\begin{figure}[t]
\begin{center}
\includegraphics[width=0.7\linewidth,angle=0]{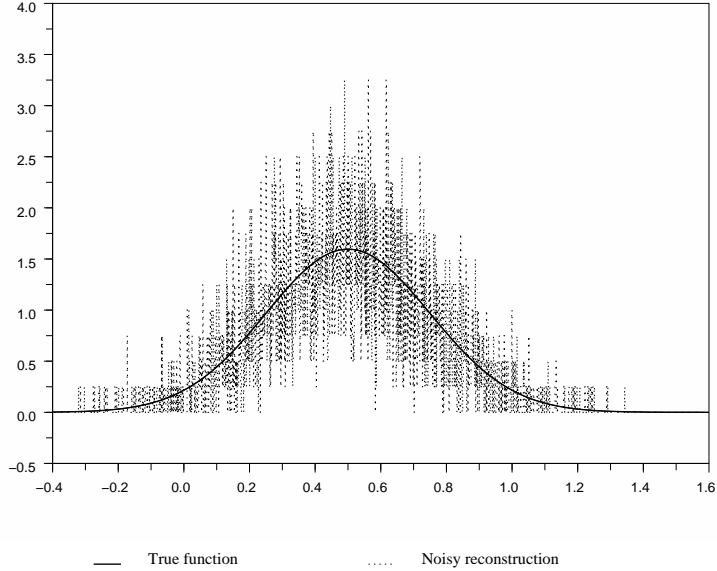}
\end{center}
\caption{Plots of the signal $f(x)=\frac{1}{0.25\sqrt{2\pi}}
\exp\left(\frac{(x-0.5)^2}{2\times 0.25^2}\right)$ and purely noisy
reconstruction with $n=4096$ based on the wavelet coefficients until the level 10 and by using the Haar basis.}\label{fig-observations}
\end{figure}
As  expected, variability  highly depends  on the  local
values of the signal.  So, our framework is very different from classical regression where we observe random variables with common
variance.  The thresholding rule considered in this
section      is      $\tilde      f_\gamma=(\tilde{f}_{n,\gamma})_n$      with
$\tilde{f}_{n,\gamma}$ defined in (\ref{defest}) with 
$$\Gamma_n=\left\{\la=(j,k):\quad -1\leq j\leq j_0,\ k\in\Z\right\}$$
and 
$$\eta_{\la,\gamma}=\sqrt{2\gamma\ln (n) \hat{V}_{\la,n} }+\frac{\gamma\ln
    n}{3n}\norm{\p_\la}_\infty.$$
Observe that  $\eta_{\la,\gamma}$ slightly differs from  the threshold defined
in    (\ref{defseuil})   since    $\tilde{V}_{\la,n}$    is now  replaced    with
$\hat{V}_{\la,n}$.   Such    a   modification   is   natural    in   view   of
(\ref{equivseuil}) and Theorem \ref{lower}. In particular, it allows to derive the parameter $\gamma$ as an explicit function of the
threshold. We guess that the performances of our
thresholding rule associated with the threshold $\eta_{\la,\gamma}$ defined in
(\ref{defseuil}) are very close.  Now, to complete the definition of the
estimate, we  have to  choose the parameters  $j_0$ and $\gamma$.  This choice
is  capital  and  is  extensively  discussed in  the  sequel.  Using  $n=1024$,  Figure \ref{fig-reconstruction}  displays 9  examples  of intensity
reconstructions obtained with $j_0=\log_2(n)=10$ and $\gamma=1$.  These
functions  are  respectively   denoted  'Haar1',  'Haar2',  'Blocks',  'Comb',
'Gauss1', 'Gauss2', 'Beta0.5', 'Beta4' and 'Bumps' and
have been  chosen to represent the  wide variety of signals  arising in signal
processing (see the Appendix for a precise definition of each signal).  Each  of them  satisfies  $\norm{f}_1=1$  and  can be  classified
according to the following criteria: the smoothness, the size of the support
(finite/infinite), the value of the sup norm (finite/infinite) and the shape (to be
piecewise constant  or a mixture of  peaks). In particular,  the signal 'Comb'
(respectively 'Beta0.5') is inspired  by  the construction  of  the  counter-example  proposed in  Theorem
\ref{contrexem} (respectively Proposition \ref{contreexlinfini}). 
\begin{figure}[tpb]
\begin{center}
\includegraphics[width=1.2\linewidth,angle=-90]{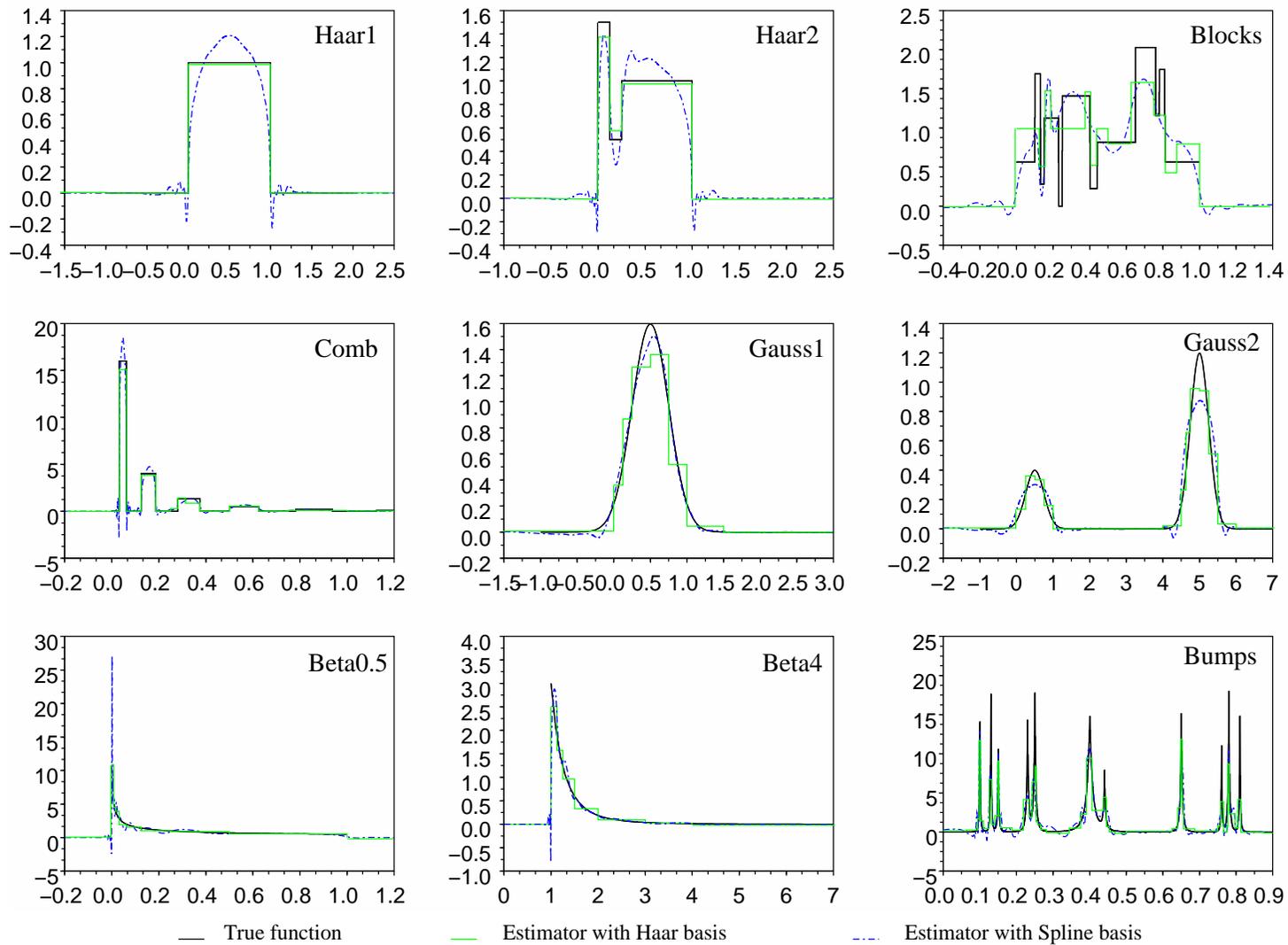}
\end{center}
\caption{Reconstructions  by  using  the  Haar   and  the  spline  bases  of  9
signals with $n=1024$, $j_0=10$ and $\gamma=1$. Top: 'Haar1', 'Haar2', 'Blocks'; Middle: 'Comb', 'Gauss1', 'Gauss2';
Bottom: 'Beta0.5', 'Beta4', 'Bumps'}\label{fig-reconstruction}
\end{figure}

More interestingly, numerical results are provided to answer the question about the choice of
$\gamma$.  Given $n$ and a function $f$, we denote 
$R_n(\gamma)$ the ratio between the $\ell_2$-performance of our
procedure (depending on $\gamma$) and the oracle risk where the wavelet coefficients at levels $j>j_0$
are omitted. We have:
$$
R_n(\gamma)=\frac{\sum_{\la\in\Gamma_n}(\tilde\be_\la-\be_\la)^2}{\sum_{\la\in\Gamma_n}\min(\be_\la^2,V_{\la,n})}=\frac{\sum_{\la\in\Gamma_n}(\hb_\la \indic_{|\hb_\la|\geq\eta_{\la,\gamma}}-\be_\la)^2}{\sum_{\la\in\Gamma_n}\min(\be_\la^2,V_{\la,n})}. 
$$
 Of course, $R_n$ is a stepwise function and the change points of $R_n$
correspond to  the values  of $\gamma$ such  that there exists  $\lambda$ with
$\eta_{\la,\gamma}=|\hat\be_\la|$.  
The average over 1000 simulations of $R_n(\gamma)$ is computed
providing an estimation of $\E(R_n(\gamma))$. 
This average ratio, denoted $\overline{R_n}(\gamma)$ and viewed as a function of $\gamma$, is plotted for three signals
'Haar1', 'Gauss1' and 'Bumps' for $n\in\{64,128,256,512,1024,2048,4096\}$. For non compactly supported signals, to compute the ratio, the wavelet
coefficients associated with the tails of the signals are omitted but we ensure
that this approximation is negligible with respect to the values of $R_n$.
The parameter $j_0$ takes the value 
$j_0=\log_2(n)$.   Fixing  $j_0=\log_2(n)$  is  natural  in  view  of  Theorem
\ref{inegoraclelavraie} (applied with $c=1$ and $c'=0$) and Theorem
\ref{lower}.
Figure  \ref{fig-Haar1}     displays
$\overline{R_n}$ for 'Haar1' decomposed on the Haar basis. The left side of Figure
\ref{fig-Haar1} gives a  general idea of the shape  of $\overline{R_n}$, while the
right side focuses on small values of $\gamma$.  
\begin{figure}[tbp]
\begin{minipage}[c]{.56\linewidth}
\hspace{-0.7cm}\includegraphics[width=\linewidth,angle=0]{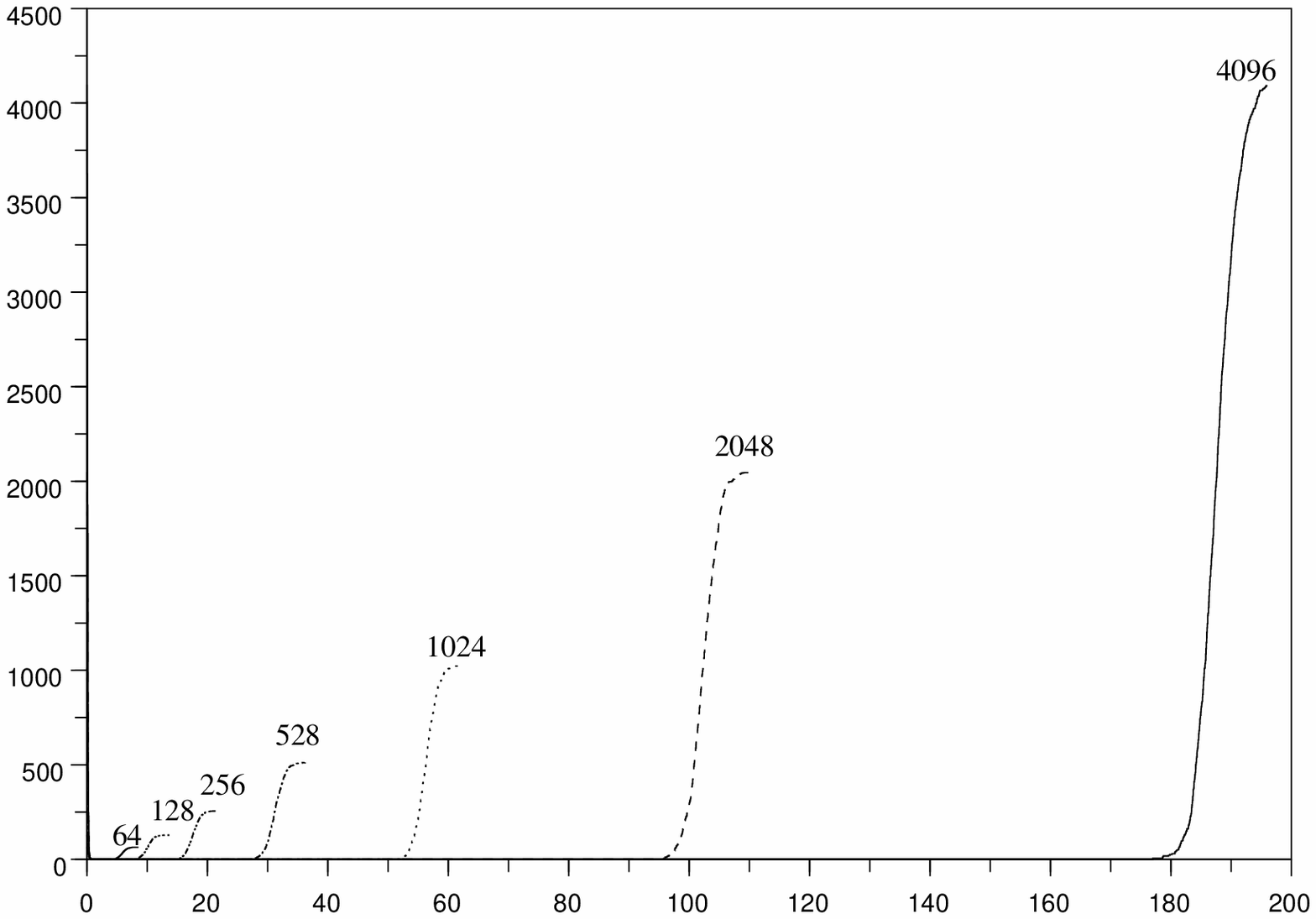}
\end{minipage} \hfill
\hspace{-1.5cm} \begin{minipage}[c]{.56\linewidth}
\includegraphics[width=\linewidth,angle=0]{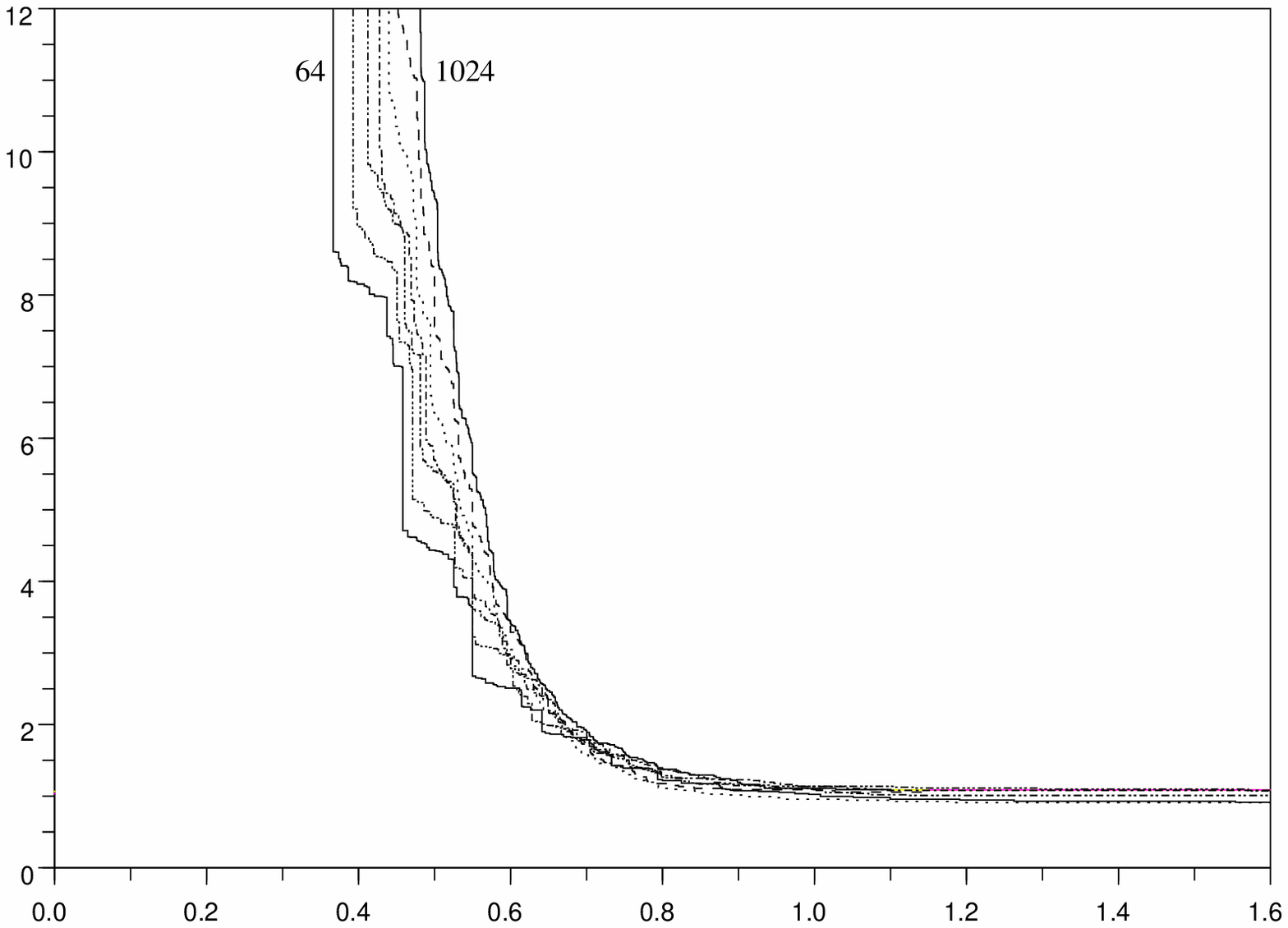}
\end{minipage}
\caption{The  function  $\gamma\to  \overline{R_n}(\gamma)$  at  two  scales  for
'Haar1'      decomposed      on      the      Haar     basis      and      for
$n\in\{64,128,256,512,1024,2048,4096\}$ with $j_0=\log_2(n)$.}\label{fig-Haar1}
\end{figure}
Similarly, Figures \ref{fig-Gauss1} and 
 \ref{fig-Bumps} display  $\overline{R_n}$ for 'Gauss1' decomposed on
the spline basis and for 'Bumps' decomposed on
the Haar and the spline bases. 
\begin{figure}[]
\begin{center}
\includegraphics[width=0.7\linewidth,angle=-0]{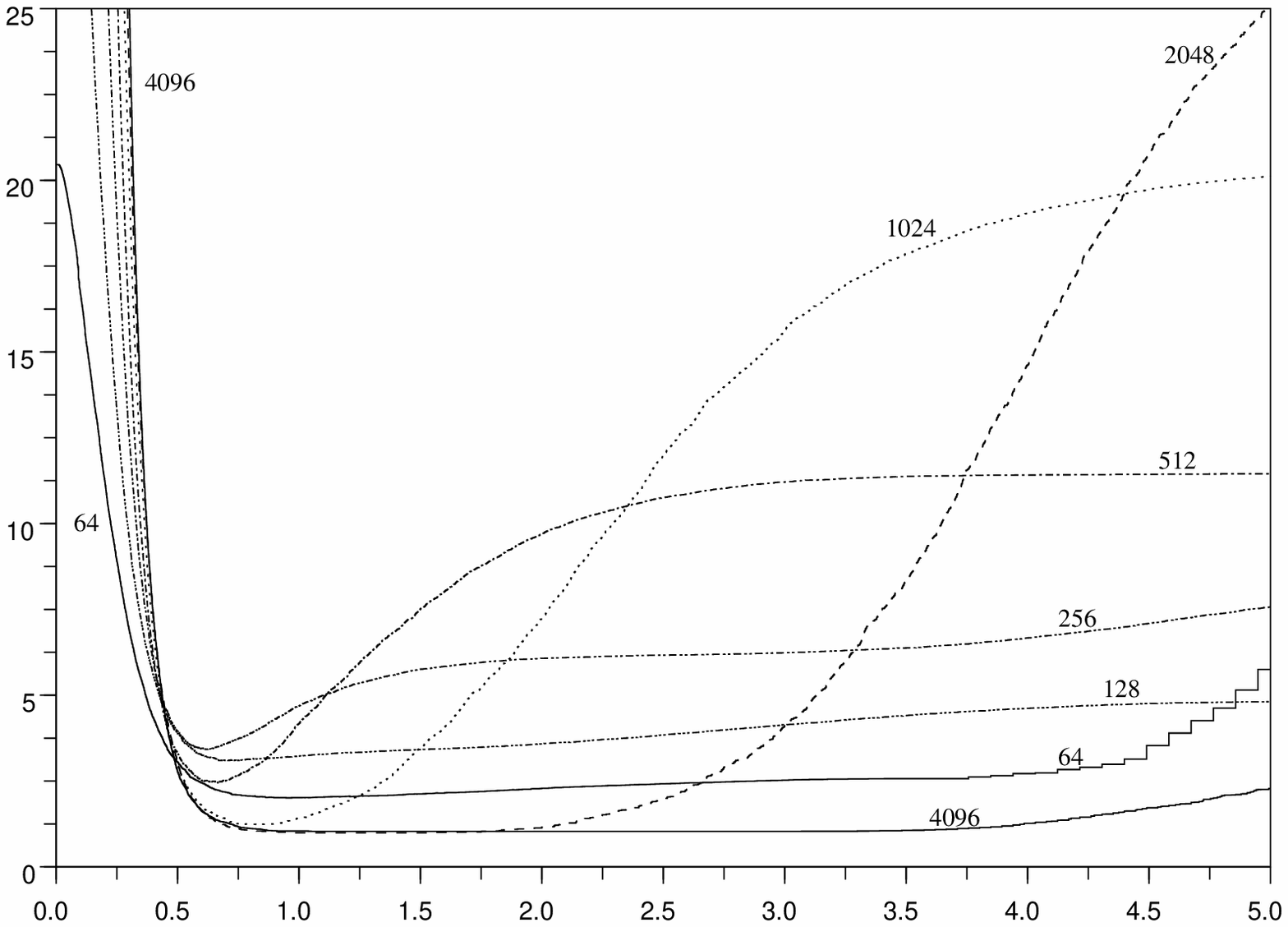}
\end{center}
\caption{The  function  $\gamma\to  \overline{R_n}(\gamma)$  for
'Gauss1'      decomposed      on      the      spline     basis      and      for
$n\in\{64,128,256,512,1024,2048,4096\}$ with $j_0=\log_2(n)$.}\label{fig-Gauss1}
\end{figure}
\begin{figure}[tbp]
\begin{minipage}[c]{.56\linewidth}
\hspace{-0.7cm}\includegraphics[width=\linewidth,angle=0]{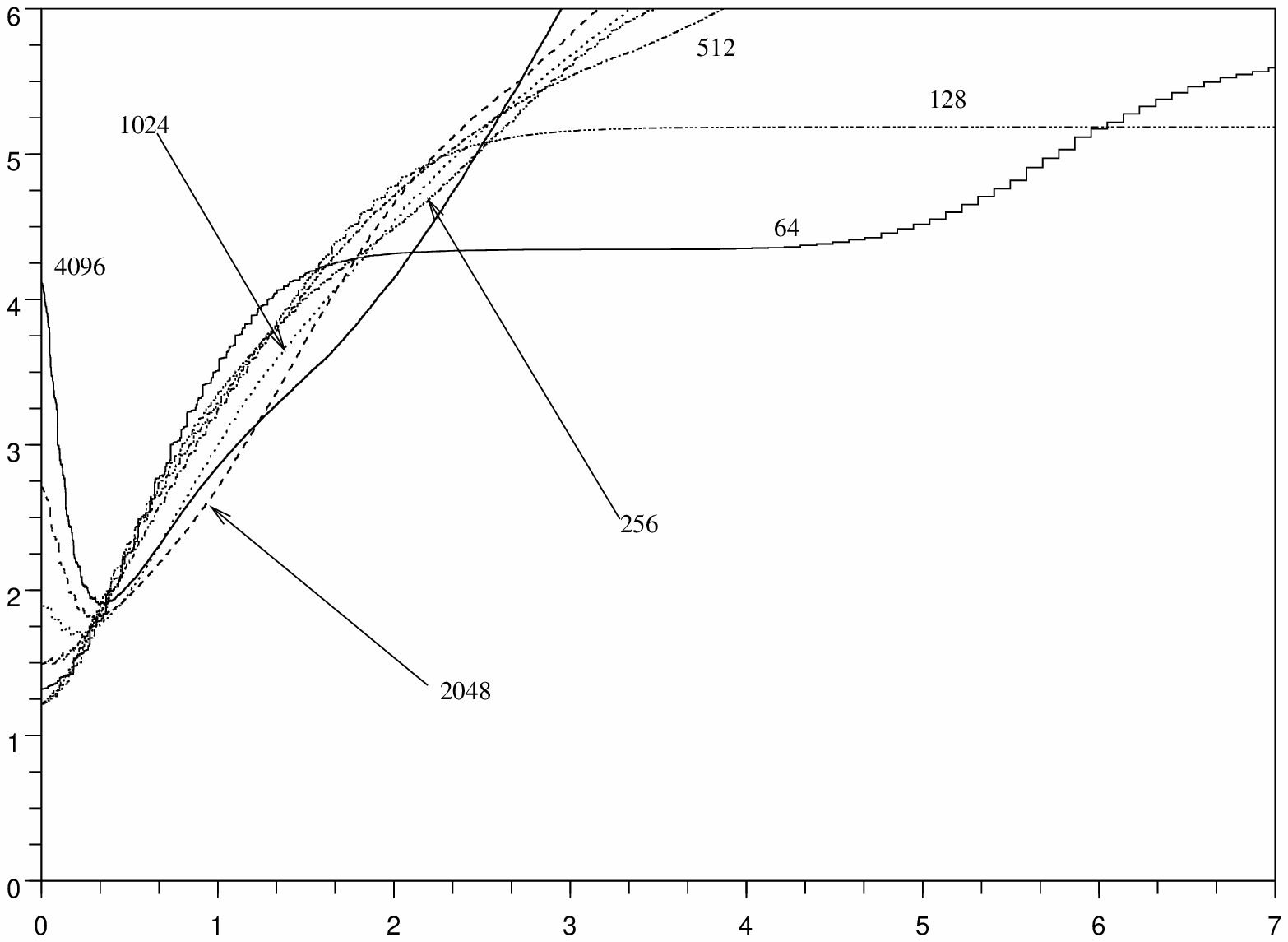}
\end{minipage} \hfill
\hspace{-1.5cm} \begin{minipage}[c]{.56\linewidth}
\includegraphics[width=\linewidth,angle=0]{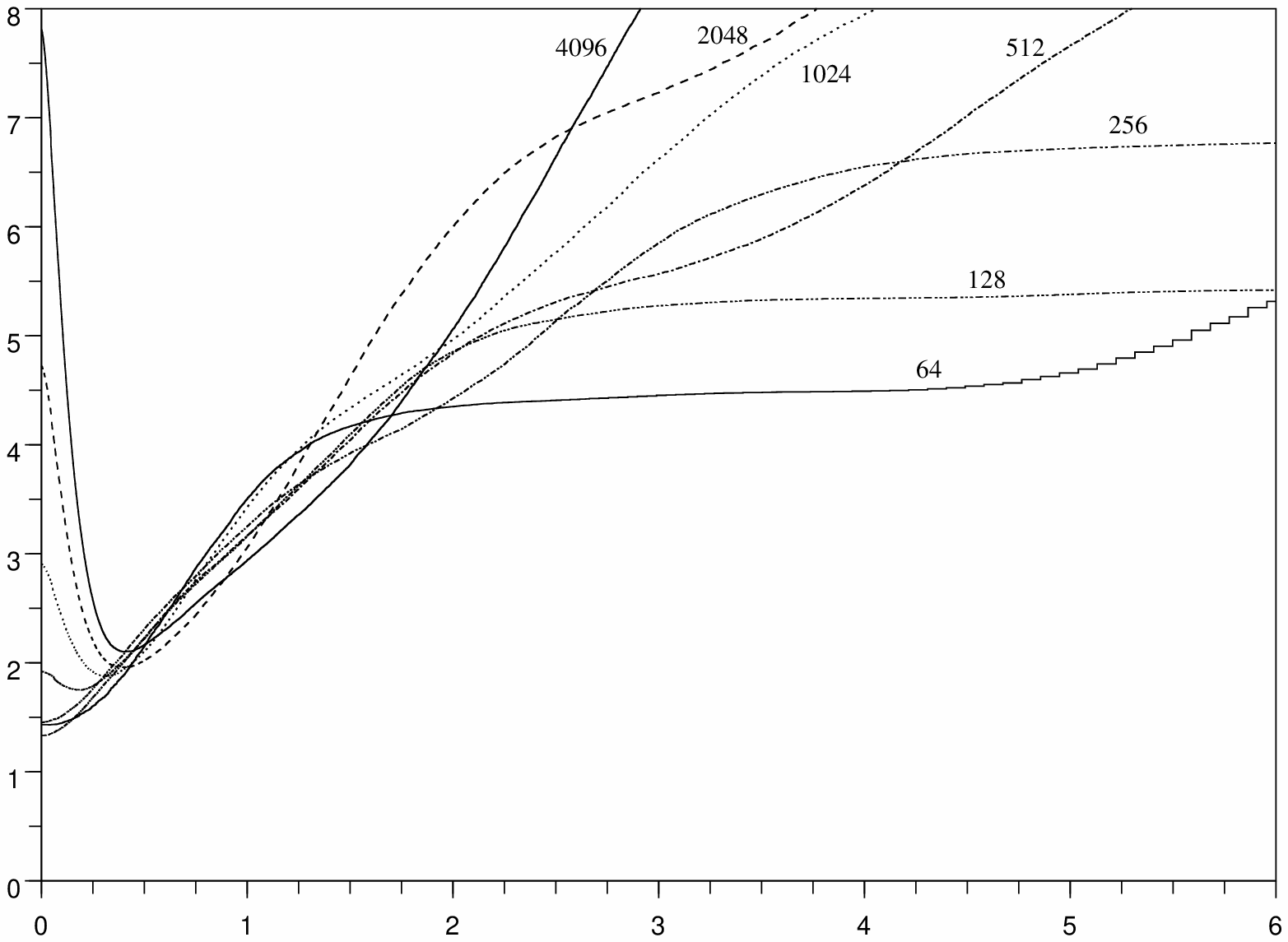}
\end{minipage}
\caption{The  function  $\gamma\to  \overline{R_n}(\gamma)$  for
'Bumps' decomposed on the Haar and the spline bases and for
$n\in\{64,128,256,512,1024,2048,4096\}$ with $j_0=\log_2(n)$.}\label{fig-Bumps}
\end{figure}

To discuss our results, we introduce
$$\gamma_{\min}(n)=\argmin_{\gamma>0}\overline{R_n}(\gamma).$$ 
 
For  'Haar1',
$\gamma_{\min}(n)\geq 1$ for any value of $n$ and taking
$\gamma<1$ deteriorates the performances of  the estimate.  Such a result was
established from the theoretical point of view in Theorem \ref{lower}. In fact,  Figure
\ref{fig-Haar1} allows to draw the following major
conclusion for 'Haar1':
\begin{equation}\label{super}
\overline{R_n}(\gamma)\approx\overline{R_n}(\gamma_{\min})\approx
1
\end{equation}
 for a wide range of $\gamma$  around $\gamma_{\min}>1$ that contains $\gamma=1$. For instance, when $n=4096$, the minimum of $\overline{R_n}$, close to 1,  is very flat and the minimizer is surrounded by the "plateau" $[1,177]$. 
 So, the values  of  $\gamma_{\min}(n)$ should  not  be considered  as
sacred. Our thresholding rule with
$\gamma=1$ performs very well since it achieves the same performance as the oracle estimator.

For 
'Gauss1', $\gamma_{\min}(n)\geq 0.5$ for any value of $n$. Moreover, as soon
as $n$ is large enough, the oracle ratio at $\gamma_{\min}$ is of order $1$. Besides, when $n\geq 2048$,
as for  'Haar1', $\gamma_{\min}(n)$  is larger than  $1$. We observe the
``plateau phenomenon'' as well and as for 'Haar1', the size of the plateau increases when $n$ increases.  This can be
explained by the following important  property of 'Gauss1'. 'Gauss1' can be well approximated by a
finite combination of the atoms of the spline basis. So, we have the strong
impression  that  the  asymptotic  result  of  Theorem  \ref{lower}  could  be
generalized for  the spline  basis as  soon as we  can build  positive signals
decomposed on the spline basis.  

Conclusions for 'Bumps' are very different. Remark that this irregular signal
has many significant wavelet coefficients at
high resolution levels whatever  the basis. We have $\gamma_{\min}(n)<0.5$ for
each  value of  $n$. Besides,  $\gamma_{\min}(n)\approx 0$  when  $n\leq 256$,
meaning that all the coefficients until  $j=j_0$ have to be kept to obtain the
best estimate. So, the parameter $j_0$ plays an essential role and has to be
well  calibrated   to  ensure  that   there  are  no   non-negligible  wavelet
coefficients for  $j>j_0$.  Other differences  between Figure \ref{fig-Haar1}
(or Figure \ref{fig-Gauss1}) and Figure \ref{fig-Bumps} have to be emphasized.  For
'Bumps', when $n\geq 512$, the minimum of $\overline{R_n}$ is well localized,
there is no plateau anymore and $\overline{R_n}(1)>2$ ($\overline{R_n}(\gamma_{\min}(n))$ is larger than 1). 

As  a  preliminary conclusion,  it  seems  that  the ideal  choice  of  $\gamma$  and  the
performance of  the thresholding rule  highly depend on the  decomposition of
the signal on the wavelet basis. Hence, in the sequel, we have decided to force $j_0=10$ so
that the decomposition on the basis is not too rough. To extend previous results  and for the sake
of exhaustiveness Figures
\ref{tous-1} and  \ref{tous-2} display the  average of the function  $R_n$ for
the signals 'Haar1',  'Haar2',  'Blocks',  'Comb',
'Gauss1', 'Gauss2', 'Beta0.5', 'Beta4' and 'Bumps' with $j_0=10$. For brevity, we only consider the values
$n\in\{64,256,1024,4096\}$ and the average of
$R_n$ is performed over 100 simulations. Note also that we fix $j_0=10$
and $100$ simulations (and not larger parameters) because  computational
difficulties arise when we deal with infinite support for heavy-tailed signals
('Beta4' and 'Comb') and for a wide range of $\gamma$. 
 Figure \ref{tous-1} gives the results
obtained for the  Haar basis and Figure \ref{tous-2} for  the spline basis. 
\begin{figure}[tbp]
\begin{center}
\includegraphics[width=0.75\linewidth,angle=-0]{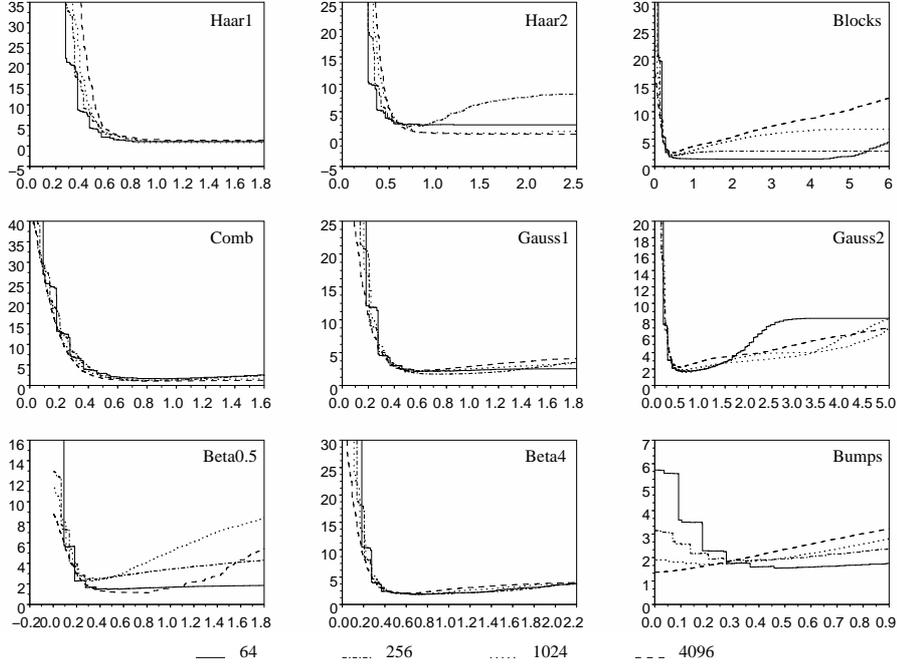}
\end{center}
\caption{Average over  100 iterations of the  function $R_n$
for signals decomposed on the Haar basis and for
$n\in\{64,256,1024,4096\}$ with $j_0=10$.}\label{tous-1}
\end{figure}
\begin{figure}[tbp]
\begin{center}
\includegraphics[width=0.75\linewidth,angle=-0]{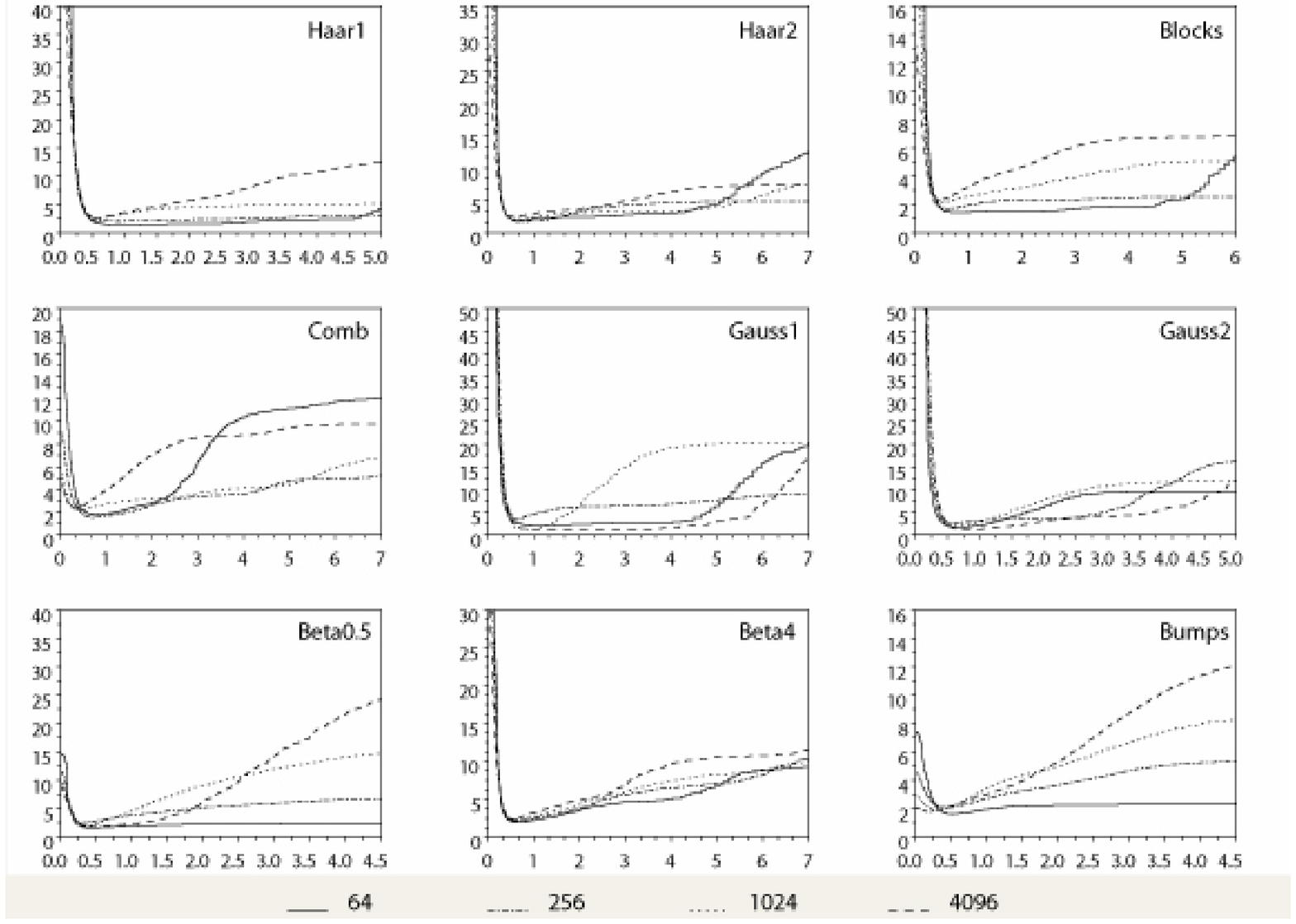}
\end{center}
\caption{Average over  100 iterations of the  function $R_n$
for signals decomposed on the spline basis and for
$n\in\{64,256,1024,4096\}$ with $j_0=10$.}\label{tous-2}
\end{figure}
To interpret the results, we introduce
$$R_n^{\log}(\gamma)=\frac{\sum_{\la\in\Gamma_n}(\tilde\be_\la-\be_\la)^2}{\sum_{\la\in\Gamma_n}\min(\be_\la^2,V_{\la,n}\log(n))}=\frac{\sum_{\la\in\Gamma_n}(\hb_\la \indic_{|\hb_\la|\geq\eta_{\la,\gamma}}-\be_\la)^2}{\sum_{\la\in\Gamma_n}\min(\be_\la^2,V_{\la,n}\log(n))},$$
where   the   denominator   appears    in   the   upper   bound   of   Theorem
\ref{inegoraclelavraie}.  We  also  measure  the $\ell_2$-performance  of  the
estimator by using
$$r_n(\gamma)=\sum_{\la\in\Gamma_n}(\tilde\be_\la-\be_\la)^2=\sum_{\la\in\Gamma_n}(\hb_\la \indic_{|\hb_\la|\geq\eta_{\la,\gamma}}-\be_\la)^2.$$
Table \ref{tableau} gives, for
each signal and for $n\in \{64,256,2048,4096\}$, the average of
$r_n (1)$, denoted $\overline{r_n}(1)$, the average of 
${R_n}(1)$ , denoted
$\overline{R_n}(1)$ and the average of $R_n^{\log}(1)$, denoted
$\overline{R_n^{\log}}(1)$ (100 simulations are performed).
In  view  of  Table  \ref{tableau},  let us introduce two classes of functions. The first class is the class of signals that are
well approximated  by a  finite combination  of the atoms  of the  basis (it
contains 'Haar1', 'Haar2' and 'Comb' for the Haar basis and 'Gauss1' and 'Gauss2' for the spline
basis).  For such  signals, the estimation problem  is close  to  a parametric
problem  and in  this  case the  performance  of the  oracle  estimate can  be
achieved at least for $n$ large enough and (\ref{super}) is true for a wide
range of $\gamma$ around $\gamma_{\min}$ that contains $\gamma=1$. The second class is the class of irregular signals with significant wavelet coefficients at high
resolution levels (it contains all the other cases except 'Beta0.5').  For  such  signals,  Table   \ref{tableau}  shows  that
$\overline{R_n(1)}$ seems to increase with $n$. But $\overline{R_n^{\log}(1)}$ remains constant, showing
that the  upper bound (with the logarithmic term) of Theorem
\ref{inegoraclelavraie} is probably achieved up to a constant. 'Beta0.5' has
only one significant coefficient at each level. This may explain why its behavior seems to be  between the first and
second class behavior. Finally let us note that the oracle ratio curve for
'Bumps', $j_0=10$ and $n=4096$ has a minimizer $\gamma_{\min}$ close to $0$
and has a
different behavior from the one with $j_0=12$ (see Figure \ref{fig-Bumps} ). It
 illustrates again the fact that 'Bumps' has still some important
coefficients at the level of resolution $j_0=12$ that can be taken into account if $\log_2(n)=12$.

\begin{table}[htbp]
\vspace{-0.1cm}\hspace{-2.5cm}
{\tiny %\tiny
\begin{center}
\begin{tabular}[]{|c|c|c|c|c|c|c|c|}
\cline{3-8}
\multicolumn{2}{r|}{}&\multicolumn{3}{c|}{}&\multicolumn{3}{c|}{}\\
\multicolumn{2}{r|}{}&\multicolumn{3}{c|}{Haar}&\multicolumn{3}{c|}{Spline}\\
\multicolumn{2}{r|}{}&\multicolumn{3}{c|}{}&\multicolumn{3}{c|}{}\\
\cline{2-8}
\multicolumn{1}{r|}{}&&&&&&&\\
\multicolumn{1}{r|}{}&$n$& $\overline{r_n}(1)$  & $\overline{R_n}(1)$ & $\overline{R_n^{\log}}(1)$ & $\overline{r_n}(1)$  & $\overline{R_n}(1)$ & $\overline{R_n^{\log}}(1)$ \\
\multicolumn{1}{r|}{}&&&&&&&\\
\hline 
&&&&&&&\\
 & 64 &0.016 &1.0 &0.2&0.10& 1.4&0.7\\
&&&&&&&\\
&256  & 0.0042& 1.1&0.2&0.068& 2.0&0.8\\
Haar1&& &&& &&\\
&1024  &0.0008 &0.8 &0.1&0.042&3.3 &0.9\\
&&&&&&&\\
&4096  &0.0002 &1.0 &0.2&0.016&3.5 &0.7\\
&&&&&&&\\
\hline
&&&&&&&\\
 & 64 &0.082 &2.6 &0.6&0.21&2.1 &1.0\\
&&&&&&&\\
&256  &0.026 &3.3 &0.6&0.085& 1.8&0.7\\
Haar2&& &&& &&\\
&1024  &0.0023 &1.2 &0.2&0.053& 2.4&0.9\\
&&&&&&&\\
&4096  &0.0004 &1.0 &0.1&0.026&2.9 &0.8\\
&&&&&&&\\
\hline
&&&&&&&\\
 & 64 &0.31 &1.4 &0.9&0.27&1.4 &0.9\\
&&&&&&&\\
&256  &0.26 & 2.5&1.0&0.21& 1.9&1.0\\
Blocks && &&& &&\\
&1024  &0.13 &2.9 &0.9&0.13& 2.6&0.9\\
&&&&&&&\\
&4096  &0.053 &3.7 &0.8&0.063&3.2 &0.8\\
&&&&&&&\\
\hline
&&&&&&&\\
 & 64 &0.61 &1.7 &0.4&1.71&1.8 &0.8\\
&&&&&&&\\
&256  &0.12 & 1.3&0.2&0.78&1.7 &0.7\\
Comb&& &&& &&\\
&1024  &0.032 & 1.4&0.2&0.52&2.7 &0.8\\
&&&&&&&\\
&4096  & 0.0063& 1.1&0.1&0.23& 4.0&0.7\\
&&&&&&&\\
\hline
&&&&&&&\\
 & 64 &0.21 &2.3 &0.9&0.10&2.1 &0.7\\
&&&&&&&\\
&256  &0.072 & 1.8&0.7&0.060&4.5 &0.9\\
Gauss1&& &&& &&\\
&1024  &0.039 &2.6 &0.7&0.0048&1.2 &0.2\\
&&&&&&&\\
&4096  & 0.018& 2.9&0.7&0.0017&1.2 &0.2\\
&&&&&&&\\
\hline 
&&&&&&&\\
 & 64 &0.17 &1.9 &0.7&0.12&2.1 &0.7\\
&&&&&&&\\
&256  &0.07 &2.0 &0.6&0.05&3.1 &0.6\\
Gauss2&& &&& &&\\
&1024  &0.031 &2.3 &0.6&0.012& 2.8&0.4\\
&&&&&&&\\
&4096  & 0.015&3.0 &0.7&0.0017& 1.2&0.2\\
&&&&&&&\\
\hline
&&&&&&&\\
 & 64 &1.6 &1.7 &1.0&2.2&1.9 &1.0\\
&&&&&&&\\
&256  & 1.1& 3.4&1.0&1.4&3.8 &1.0\\
Beta0.5&& &&& &&\\
&1024  &0.45 & 5.1&0.8&0.51&4.6 &0.8\\
&&&&&&&\\
&4096  & 0.045&1.6 &0.3&0.066&2.3 &0.3\\
&&&&&&&\\
\hline 
&&&&&&&\\
 & 64 &0.25 &2.1 &0.8&0.36&2.2 &0.9\\
&&&&&&&\\
&256  &0.093 &2.0 &0.6&0.16& 2.5&0.8\\
Beta4&& &&& &&\\
&1024  &0.041 &2.2 &0.6&0.061&2.7 &0.7\\
&&&&&&&\\
&4096  &0.020 &2.8 &0.7&0.024&3.3 &0.6\\
&&&&&&&\\
\hline 
&&&&&&&\\
 & 64 &4.9 & 1.8&1.0&4.3&2.0 &1.1\\
&&&&&&&\\
&256  &3.1 & 2.5&1.0&2.5&2.7 &1.0\\
Bumps&& &&& &&\\
&1024  &1.5 &3.0 &0.9&1.2&3.4 &0.9\\
&&&&&&&\\
&4096  &0.62 & 3.4&0.7&0.38&3.0 &0.6\\
&&&&&&&\\
\hline
\end{tabular}
\end{center}
}
\caption{Values of  $\overline{r_n}(1)$,  $\overline{R_n}(1)$ and $\overline{R_n^{\log}}(1)$ for each signal decomposed on the Haar basis or the spline basis and for $n\in\{64,256,1024,4096\}$.}\label{tableau}
\end{table}
Finally, we
would like to emphasize the following conclusions. Performances of our
thresholding rule are suitable since the ratio $\overline{R_n}(1)$ is controlled. Moreover a convenient choice of the basis improves this ratio
but also the performances of the estimator itself. Furthermore,
the size of the support does not play any role (compare estimation of 'Comb'
and  'Haar1' for instance)  and the  estimate $\tilde  f_{n,1}$ performs
well for recovering the size and location of peaks. 
%%%%%%%%%%%%%%%%%%%%%%%%%%%%%%%%%%%%%%%%%%%
%%%%%%%%%%%%%%%%%%%%%%%%%%%%%%%%%%%%%%%%%%%
\section{Proofs}\label{proofs}
In this  section, the notation  $\square$  represents an  absolute constant
whose value may  change at each  line. For any  $x>0$, the notation  $\lceil x\rceil$
denotes the smallest integer larger than $x$. Notations of Sections \ref{main}
and \ref{biorthogonal} are used. Recall also that we have set
$$\forall\;\la\in\Lambda,\quad F_\la=\int_{supp(\p_\la)}f(x)dx.$$
%%%%%%%%%%%%%%%%%%%%%%%%%%%%%%%%%%%%%%%%%%%
\subsection{Proof of Theorem \ref{inegoracle}}
Let $\ga, p, q, \e$ be as in Theorem \ref{inegoracle}. 
We start as usual for model selection with (\ref{etoile2}). One has for all subset $m$ of $\Ga_n$
$$\ga_n(\tilde{f}_{n,\gamma}) +\pen(\hat{m})\leq \ga_n(\hat{f}_m)
+\pen(m).$$
If $g=\sum_{\la\in\La} \al_\la\tp_\la$, setting $\nu_n(g)= \sum_{\la \in
  \La}\al_\la(\hb_\la-\be_\la)$, we obtain that
$$\ga_n(g)=\normp{g-f}^2-\normp{f}^2 - 2\nu_n(g).$$
Hence, 
\begin{equation*}
\label{depart}
\normp{\tilde{f}_{n,\gamma}-f}^2\leq \normp{\hat{f}_m-f}^2 +
2\nu_n(\tilde{f}_{n,\gamma}-\hat{f}_m)+\pen(m)-\pen(\hat{m}).
\end{equation*}
For any subset of indices $m'$, let 
$\chi(m')=\sqrt{\sum_{\la\in m'}(\hb_\la-\be_\la)^2} $ and let  $f_m=\sum_{\la \in m}\be_\la \tp_\la$ be the orthogonal projection of $f$ on $S_m$ for $\normp{.}$.
Then $\chi^2(m)=\nu_n(\hat{f}_m-f_m)=\normp{\hat{f}_m-f_m}^2=\normp{\hat{f}_m-f}^2-\normp{f_m-f}^2.$ Hence,
\begin{equation*}
\label{depart2}
\normp{\tilde{f}_{n,\gamma}-f}^2\leq \normp{f_m-f}^2-\chi^2(m)+
2\nu_n(\tilde{f}_{n,\gamma}-f_m)+\pen(m)-\pen(\hat{m}).
\end{equation*}
Furthermore,
$$\nu_n(\tilde{f}_{n,\gamma}-f_m) \leq \normp{\tilde{f}_{n,\gamma}-f_m}\chi(m\cup \hat{m}) \leq
\normp{\tilde{f}_{n,\gamma}-f}\chi(m\cup \hat{m})+\normp{f_m-f}\chi(m\cup \hat{m}) .$$
Using twice the fact that $2ab\leq\theta a^2+\theta^{-1} b^2$, for $\theta =
2/(2+\e)$ and $\theta =
2/\e$, we obtain that
$$2\nu_n(\tilde{f}_{n,\gamma}-f_m) \leq \frac{2}{2+\e}
\normp{\tilde{f}_{n,\gamma}-f}^2+\frac2\e\normp{f_m-f}^2+(1+\e)\chi^2(m\cup\hat{m}).$$
Hence we obtain that
$$\frac{\e}{2+\e}\normp{\tilde{f}_{n,\gamma}-f}^2\leq\left(1+\frac{2}{\e}\right)\sum_{\la \not\in m}\be_\la^2 +(1+\e)\chi^2(m\cup\hat{m})-\chi^2(m)+\pen(m)-\pen(\hat{m}).$$
But $\chi^2(m\cup\hat{m})\leq \chi^2(m)+\chi^2(\hat{m}).$ 
After integration it remains to control 
$$\mathcal{A}=\E((1+\e)\chi^2(\hat{m})-\pen(\hat{m})).$$
Since 
$$\hat m=\left\{\la\in \Gamma_n:\quad |\hat\be_\la|\geq\eta_{\la,\gamma}\right\},$$
we have
$$\mathcal{A}=\sum_{\la\in\Ga_n}\E\left(\left[(1+\e)(\hb_\la-\be_\la)^2-\eta_{\la,\gamma}^2\right]\indic_{|\hb_\la|\geq \eta_{\la,\gamma}}\right).$$
Hence,
$$\mathcal{A}\leq
\sum_{\la\in\Ga_n}\E\left((1+\e)(\hb_\la-\be_\la)^2\indic_{(1+\e)(\hb_\la-\be_\la)^2\geq\eta_{\la,\gamma}^2}\indic_{
      |\hb_\la|\geq \eta_{\la,\gamma}}\right).$$
Then, remark that if $|\hb_\la|\geq \eta_{\la,\gamma}$ then $|\hb_\la| \geq \frac{\mu\ln n
}n \norm{\p_\la}_\infty$, where $\mu=[\sqrt{6}+1/3]\gamma$ but also that $|\hb_\la| \leq \frac{\norm{\p_\la}_\infty
N_\la}{n}$, hence $N_\la\geq \mu \ln n,$
where $$N_\la=\int_{supp(\p_\la)} dN.$$
So, one can split $\mathcal{A}$ and bound this term by $LDLM + LDSM$, where
$$LDLM= \sum_{\la\in\Ga_n}\E\left((1+\e)(\hb_\la-\be_\la)^2\indic_{(1+\e)(\hb_\la-\be_\la)^2\geq\eta_{\la,\gamma}^2}\indic_{
      |\hb_\la|\geq \eta_{\la,\gamma}} \indic_{N_\la\geq \mu \ln n}\indic_{nF_\la\geq \theta\mu \ln n}\right),$$
and 
$$LDSM= \sum_{\la\in\Ga_n}\E\left((1+\e)(\hb_\la-\be_\la)^2\indic_{(1+\e)(\hb_\la-\be_\la)^2\geq\eta_{\la,\gamma}^2}\indic_{
      |\hb_\la|\geq \eta_{\la,\gamma}} \indic_{N_\la\geq \mu \ln n}\indic_{nF_\la\leq \theta\mu \ln n}\right),$$
 where $\theta<1
$ is a parameter that is  chosen later on. Here, $LDLM$ stands for ``large
deviation large mass'' and $LDSM$ stands for ``large deviation small mass''.
Let us begin with $LDLM$. By the H\"older Inequality 
$$LDLM \leq  \sum_{\la\in\Ga_n}
(1+\e)[\E|\hb_\la-\be_\la|^{2p})]^{1/p}\P(|\hb_\la-\be_\la|\geq
\eta_{\la,\gamma}/{\sqrt{1+\e}})^{1/q} \indic_{nF_\la\geq \theta\mu \ln n}.$$
 Before  going
further, let us state the following useful lemma:
\begin{Lemma} 
\label{toutesdev}
For any $u>0$
\begin{equation}
\label{surhb1}
\P\left(|\hb_\la-\be_\la|\geq
\sqrt{2uV_{\la,n}}+\frac{\norm{\p_\la}_\infty u}{3n}\right)\leq 2 e^{-u}.
\end{equation}
Moreover, for any $u>0$
\begin{equation}
\label{surv1}
\P\left(V_{\la,n}\geq \tilde{V}_{\la,n}(u)\right)\leq e^{-u},
\end{equation}
where
$$\tilde{V}_{\la,n}(u) = \hat{V}_{\la,n} +\sqrt{2\hat{V}_{\la,n} \frac{\norm{\p_\la}_\infty^2}{n^2}u}+3\frac{\norm{\p_\la}_\infty^2}{n^2}u.$$
\end{Lemma}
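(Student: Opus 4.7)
The plan is to derive both inequalities from the classical Bernstein inequality for compensated Poisson integrals: for any bounded measurable $h$, writing $M(h) = \int h\,dN - \int h(x)\,n f(x)\,dx$, one has
$$\P\Bigl(M(h) \geq \sqrt{2u\,n\!\int h^2 f\,dx} + \frac{\|h\|_\infty u}{3}\Bigr) \leq e^{-u},$$
and the same bound for $-M(h)$. This is the standard Bernstein bound for Poisson processes and can be invoked from \cite{ptrfpois} (it is proved via a Laplace transform / log-moment-generating-function calculation for Poisson increments).

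For \eqref{surhb1}, apply this inequality with $h = \p_\la/n$. Then $M(h) = \hb_\la - \be_\la$, the variance proxy is $\frac{1}{n^2}\int \p_\la^2\,n f\,dx = V_{\la,n}$, and $\|h\|_\infty = \|\p_\la\|_\infty/n$. Combining the two tails gives a factor $2$ and yields exactly \eqref{surhb1}.

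For \eqref{surv1}, apply the lower-tail version with $h = \p_\la^2/n^2$, noting that $\hat V_{\la,n} - V_{\la,n} = M(h)$ with $\|h\|_\infty = \|\p_\la\|_\infty^2/n^2$, and that the relevant variance proxy is $\frac{1}{n^3}\int \p_\la^4 f\,dx \leq \frac{\|\p_\la\|_\infty^2}{n^2}\,V_{\la,n}$. Hence with probability at least $1 - e^{-u}$,
$$V_{\la,n} \leq \hat V_{\la,n} + \sqrt{2u\,\tfrac{\|\p_\la\|_\infty^2}{n^2}\,V_{\la,n}} + \tfrac{u\|\p_\la\|_\infty^2}{3n^2}.$$
Setting $c = \|\p_\la\|_\infty^2/n^2$ and viewing this as a quadratic inequality in $\sqrt{V_{\la,n}}$, solving and then using $\sqrt{x+y}\leq\sqrt{x}+\sqrt{y}$ produces a bound of the form $V_{\la,n} \leq \hat V_{\la,n} + \sqrt{2uc\hat V_{\la,n}} + \kappa uc$ for some constant $\kappa$. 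The main step requiring care is checking that the constant $\kappa$ can be taken equal to $3$: a direct computation gives $\kappa = \tfrac{4}{3} + 2\sqrt{5/12} \approx 2.62 < 3$, which yields exactly $\tilde V_{\la,n}(u)$.

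The only real obstacle is bookkeeping: the specific constants $\sqrt{2}$, $1/3$, $3$ appearing in the statement come from the standard Bernstein constants and a tight solution of the quadratic inequality. There is no new probabilistic content beyond the application of the compensated Poisson Bernstein bound; the work is in making the algebra yield the prescribed constants.
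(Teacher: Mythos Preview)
Your proposal is correct and follows essentially the same approach as the paper: both parts are derived from the Bernstein-type inequality for compensated Poisson integrals (cited from \cite{ptrfpois}), with the second part handled by applying the lower-tail bound to $h=\p_\la^2/n^2$, bounding $\int \p_\la^4 f/n^3\leq (\|\p_\la\|_\infty^2/n^2)V_{\la,n}$, and then inverting the resulting quadratic in $\sqrt{V_{\la,n}}$. The paper carries out the inversion explicitly via $\mathcal P(x)=x^2-\sqrt{2a}\,x-a/3$ and finds $\mathcal P^{-1}(\hat V_{\la,n})=\sqrt{\hat V_{\la,n}+5a/6}+\sqrt{a/2}$, whose square is bounded by $\hat V_{\la,n}+\sqrt{2a\hat V_{\la,n}}+(4/3+2\sqrt{5/12})a$, exactly the constant $\kappa\approx 2.62<3$ you obtained.
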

\begin{proof}
Equation (\ref{surhb1}) easily comes from the classical inequalities (see
Kingman's book \cite{kin} or  Equation (5.2) of \cite{ptrfpois}).  The
same classical inequalities applied to $-\p_\la^2/n^2$ instead of $\p_\la/n$ give that
$$\P\left(V_{\la,n}\geq \hat{V}_{\la,n} +\sqrt{2u \int_\X \frac{\p_\la^4(x)}{n^4}
n f(x) dx}+\frac{\norm{\p_\la}_\infty^2}{3n^2}u \right)\leq e^{-u}.$$
But one can remark that $$\int_\X \frac{\p_\la^4(x)}{n^4}
n f(x) dx\leq \frac{\norm{\p_\la}_\infty^2}{n^2} V_{\la,n}.$$
Set $a=u\frac{\norm{\p_\la}_\infty^2}{n^2}$, then
$$\P(V_{\la,n}-\sqrt{2V_{\la,n} a}-a/3\geq \hat{V}_{\la,n})\leq e^{-u}.$$
Let $\mathcal{P}(x)=x^2-\sqrt{2a}x-a/3$. The discriminant of this polynomial is $10a/3$ which is strictly larger than $2a$. Since $V_{\la,n}$ and $\hat{V}_{\la,n}$ are positive, this means that one can inverse the equation $\mathcal{P}(\sqrt{V_{\la,n}})=\hat V_{\la,n}$ and we obtain
$$\P(\sqrt{V_{\la,n}}\geq \mathcal{P}^{-1}(\hat{V}_{\la,n}))\leq e^{-u}.$$
But $\mathcal{P}^{-1}(\hat{V}_{\la,n})$ is the positive solution of 
$$(\mathcal{P}^{-1}(\hat{V}_{\la,n}))^2-\sqrt{2a}\mathcal{P}^{-1}(\hat{V}_{\la,n})-(a/3+\hat{V}_{\la,n})=0.$$
So, finally, $\mathcal{P}^{-1}(\hat{V}_{\la,n}) = \sqrt{\hat{V}_{\la,n}+5a/6}+\sqrt{a/2}$. To conclude it remains to remark that $\tilde{V}_{\la,n} \geq(\mathcal{P}^{-1}(\hat{V}_{\la,n}))^2.$
\end{proof}
Using Equations (\ref{surhb1}) and (\ref{surv1}) of Lemma \ref{toutesdev}, we have
\begin{eqnarray*}
&&\hspace{-2cm}\P(|\hb_\la-\be_\la|\geq
\eta_{\la,\gamma}/{\sqrt{1+\e}})\\  & \leq& \P\left(|\hb_\la-\be_\la|\geq \sqrt{\frac{2\ga\ln n }{1+\e} \tilde{V}_{\la,n}(\ga \ln n )}+\frac{\gamma\ln n \norm{\p_\la}_\infty }{3(1+\e)n}\right)\\
&\leq& \P\left(|\hb_\la-\be_\la|\geq \sqrt{\frac{2\ga\ln n }{1+\e}\tilde{V}_{\la,n}(\ga \ln n )}+\frac{\gamma\ln n \norm{\p_\la}_\infty }{3(1+\e)n}, V_{\la,n} \geq \tilde{V}_{\la,n}(\ga \ln n )\right)\\&+&\P\left(|\hb_\la-\be_\la|\geq\sqrt{\frac{2\ga\ln n }{1+\e}\tilde{V}_{\la,n}(\ga \ln n )}+\frac{\gamma\ln n \norm{\p_\la}_\infty }{3(1+\e)n}, V_{\la,n} < \tilde{V}_{\la,n}(\ga \ln n )\right)\\
&\leq &\P(V_{\la,n} \geq \tilde{V}_{\la,n}(\ga \ln n ))+ \P\left(|\hb_\la-\be_\la|\geq \sqrt{\frac{2\ga}{1+\e}\ln n V_{\la,n}}+\frac{\gamma\ln n \norm{\p_\la}_\infty }{3(1+\e)n}\right)\\
&\leq& n^{-\ga}+2n^{-\ga/(1+\e)}\\&\leq &3 n^{-\ga/(1+\e)}.
\end{eqnarray*}
We need another lemma which looks
like the Rosenthal inequality. 
\begin{Lemma}
\label{moment2}
For all $p\geq 2$, there exists some absolute constant $C$ such that
$$\E(|\hb_\la-\be_\la|^{2p}) \leq C^p p^{2p} \left(V_{\la,n}^p+\left[\frac{\norm{\p_\la}_\infty}{n}\right]^{2p-2}V_{\la,n}\right).$$
\end{Lemma}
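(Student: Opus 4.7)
The plan is to view $\hb_\la - \be_\la = \frac{1}{n}\int_\R \p_\la\,d(N-\mu)$, with $d\mu = nf\,dx$, as a compensated Poisson integral of $g := \p_\la/n$, and to obtain the bound from a Rosenthal-type moment inequality. The two terms on the right of the statement correspond respectively to the ``Gaussian'' contribution $V_{\la,n}^p$ (squared sum of variances) and to a ``jump'' contribution of the form $\int g^{2p}\,d\mu$.

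Concretely, I would partition $\R$ into intervals $(I_k)$ and set
$$Y_k := \frac{1}{n}\left(\int_{I_k}\p_\la\,dN - n\int_{I_k}\p_\la f\,dx\right).$$
The $Y_k$ are independent, centered, and sum to $\hb_\la-\be_\la$. The classical Rosenthal inequality yields, for an absolute constant $C_0$,
$$\E\bigl|{\textstyle\sum}_k Y_k\bigr|^{2p}\leq (C_0 p)^{2p}\Bigl[\bigl({\textstyle\sum}_k \E Y_k^2\bigr)^{p}+{\textstyle\sum}_k \E|Y_k|^{2p}\Bigr].$$
Refining the partition, $\sum_k\E Y_k^2 \to \int g^2 d\mu = V_{\la,n}$, while $\sum_k\E|Y_k|^{2p}\to\int g^{2p}d\mu$, since for a Poisson count $N_{I_k}$ of small parameter $n\int_{I_k}f$ the leading order of the $2p$-th centered moment is linear in this parameter. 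Bounding $|\p_\la|^{2p-2}\leq \norm{\p_\la}_\infty^{2p-2}$ under the integral gives
$$\int g^{2p}\,d\mu = \frac{1}{n^{2p-1}}\int\p_\la^{2p}f\,dx \leq \left(\frac{\norm{\p_\la}_\infty}{n}\right)^{2p-2}V_{\la,n},$$
and combining with Rosenthal produces the lemma with $C = C_0^2$.

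The main obstacle is justifying the discretization limit, since higher centered moments of a small-mean Poisson are not linear in the mean and Rosenthal's constant must be tracked uniformly in the partition. A cleaner but more combinatorial alternative is the moment--cumulant formula $\E X^{2p} = \sum_\pi\prod_{B\in\pi}\kappa_{|B|}$ summed over set partitions of $\{1,\dots,2p\}$; from the Laplace functional of a compensated Poisson integral, the $k$-th cumulant of $X = \hb_\la - \be_\la$ equals $\int g^k d\mu$ for $k\geq 2$ (and vanishes for $k=1$), so $|\kappa_k|\leq b^{k-2}V_{\la,n}$ with $b := \norm{\p_\la}_\infty/n$. A partition with $m$ blocks contributes at most $b^{2p-2m}V_{\la,n}^m$; splitting on whether $b^2\leq V_{\la,n}$ shows each such quantity is bounded by $V_{\la,n}^p + b^{2p-2}V_{\la,n}$. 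Summing over the at most $B(2p)\leq (Cp)^{2p}$ relevant partitions yields the stated bound.
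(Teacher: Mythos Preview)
Your proposal is correct, and both routes you sketch work. The paper follows essentially your first strategy but decomposes by \emph{thinning} rather than spatially: using infinite divisibility, it writes $N$ as the superposition of $k$ i.i.d.\ Poisson processes of intensity $nk^{-1}f$, sets $Y_i=\int \frac{\p_\la}{n}(dN^i-nk^{-1}f\,dx)$, applies Rosenthal with Johnson's constant $K(p)\leq(16p/\ln(2p))^{2p}$, and then lets $k\to\infty$; the limiting step is handled by working on the event $\Omega_k=\{N^i_\X\leq 1\ \forall i\}$, whose complement has probability $O(k^{-1})$, so that each $Y_i$ contributes essentially a single point of $N$. Your spatial partitioning is equivalent in spirit and would require the same ``at most one point per piece'' argument, exactly the obstacle you flag. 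Your cumulant alternative is genuinely different and cleaner: because the cumulants $\kappa_k=\int g^k\,d\mu$ of a compensated Poisson integral are available in closed form, no limiting procedure is needed, and the constant comes from the Bell-number count of set partitions rather than from Rosenthal's inequality. The paper's route has the mild advantage of invoking a named inequality with a documented constant; yours has the advantage of being self-contained and avoiding the somewhat delicate $k\to\infty$ justification.
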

\begin{proof}
We know  that a Poisson process  is infinitely divisible. This  means that for
all positive  integer $k$ one can  see $N$ as  the reunion of $k$  iid Poisson
processes,  $N^i$ with  intensity  (here) $nk^{-1}\times  f$  with respect  to
the Lebesgue measure. Hence, one can apply Rosenthal inequalities for all $k$, saying that
$$\hb_\la-\be_\la=\sum_{i=1}^k \int \frac{\p_\la(x)}{n} \left(dN_x^i-nk^{-1}f(x)dx\right)=\sum_{i=1}^k Y_i$$
where for any $i$, $$Y_i= \int \frac{\p_\la(x)}{n} \left(dN_x^i-nk^{-1}f(x)dx\right).$$ So
the $Y_i$'s are iid centered variables, all having a moment of order $2p$. 
We apply Rosenthal's inequality (see Theorem 2.5 of \cite{johnson}) on
the positive and negative parts of $Y_i$. This easily implies that
$$\E\left(\left|\sum_{i=1}^k   Y_i\right|^{2p}\right)    \leq   K(p)   \max\left(\left(\E\sum_{i=1}^kY_i^2\right)^p,
\left(\E\sum_{i=1}^k |Y_i|^{2p}\right)\right),$$ where $$K(p)\leq
\left(8\times\frac{2p}{\ln(2p)}\right)^{2p}.$$
It remains to bound the upper limit of $\E(\sum_{i=1}^k |Y_i|^{q})$ for all
$q\in\{2p,2\}\geq     2$    when     $k\to\infty$.     Let    us     introduce
$$\Omega_k=\{\forall\; i\in\{1,\dots,k\},  N^i_\X\leq 1\}.$$ Then,  it is easy
to see that $\P(\Omega_k^c)\leq k^{-1}(n\norm{f}_1)^2 $ (see e.g., (\ref{n2}) below). 
\\\\
On $\Omega_k$, $|Y_i|^q= O_k(k^{-q})$ if $\int \frac{\p_\la(x)}{n} dN_x^i =0$ and $|Y_i|^q= \left[\frac{|\p_\la(T)|}{n}\right]^{q} + O_k\left(k^{-1}\left[\frac{|\p_\la(T)|}{n}\right]^{q-1}\right)$ if $\int \frac{\p_\la(x)}{n} dN_x^i =\frac{\p_\la(T)}{n} $ where $T$ is the point of the process $N^i$.
Consequently,
\begin{multline}
\label{split}
\E\sum_{i=1}^k |Y_i|^{q} \leq \E\left(\indic_{\Omega_k} \left(\sum_{T\in N}\left[\left[\frac{|\p_\la(T)|}{n}\right]^{q} + O_k\left(k^{-1}\left[\frac{|\p_\la(T)|}{n}\right]^{q-1}\right)\right] + k O_k(k^{-q})\right)\right)\\ + \sqrt{\P(\Omega_k^c)} \sqrt{\E\left[\left(\sum_{i=1}^k |Y_i|^q\right)^2\right]}.
\end{multline}
But, \begin{multline*}\sum_{i=1}^k |Y_i|^q \leq 2^{q-1} \left(\sum_{i=1}^k \left[\left[\frac{\norm{\p_\la}_\infty}{n}\right]^q (N^i_\X)^q + \left(k^{-1}\int |\p_\la(x)| f(x) dx \right)^q\right]\right)\\
\leq 2^{q-1} \left(\left[\frac{\norm{\p_\la}_\infty}{n}\right]^q N_\X^q + k\left(k^{-1}\int |\p_\la(x)| f(x) dx \right)^q\right).\end{multline*}
So, when $k\to +\infty$, the last term in (\ref{split}) converges to 0 since a Poisson variable has moments of every order and
$$
\lim\sup_{k\to\infty} \E\sum_{i=1}^k |Y_i|^{q} \leq  \E\left( \int \left[\frac{|\p_\la(x)|}{n}\right]^{q} dN_x\right)\leq \left[\frac{\norm{\p_\la}_\infty}{n}\right]^{q-2} V_{\la,n},$$ 
which concludes the proof.
\end{proof}
Since
$$\left[\frac{\norm{\p_\la}_\infty}{n}\right]^{2p-2}V_{\la,n}\leq
\max\left(V_{\la,n}^p, \left[\frac{\norm{\p_\la}_\infty}{n}\right]^{2p}\right),$$
there exists some constant $\tilde{C}$  such that
$$\E(|\hb_\la-\be_\la|^{2p}) \leq{\tilde{C}}^p p^{2p}\left(V_{\la,n}^p+\left[\frac{\norm{\p_\la}_\infty}{n}\right]^{2p}\right).$$
Finally, 
\begin{eqnarray*}
LDLM &\leq& \square(1+\e) p^2 n^{-\gamma/(q(1+\e))} \sum_{\la \in \Ga_n}
\left(V_{\la,n}+\left(\frac{\norm{\p_\la}_\infty}{n}\right)^{2}\right)
\indic_{nF_\la\geq \theta \mu \ln n }.
\end{eqnarray*}
Since $\norm{\p_\la}_\infty \leq c_{\p,n}\sqrt{n}$ for all $\la \in \Ga_n$, one has
\begin{eqnarray*}
 LDLM &\leq& \square(1+\e) p^2 c_{\p,n}^2 n^{-\gamma/(q(1+\e))} \sum_{\la \in \Ga_n} \left(F_\la + \frac{1}{n}\right)
  \indic_{nF_\la\geq \theta \mu \ln n }\\ &\leq& \square(1+\e) p^2 c_{\p,n}^2 n^{-\gamma/(q(1+\e))} \left(\sum_{\la \in
  \Ga_n} F_\la + \frac1n \sum_{\la \in
  \Ga_n} \frac{nF_\la}{\theta \mu\ln n}\right).
\end{eqnarray*}
But,
\begin{equation}\label{majoF}
\sum_{\la\in    \Ga_n}F_{\la}=\sum_{\la\in    \Ga_n}\int    f(x)\indic_{x\in
supp(\p_\la)}dx=\int f(x)dx\sum_{\la\in \Ga_n}\indic_{x\in supp(\p_\la)}.
\end{equation}
Using (\ref{mphi}), we then have
\begin{eqnarray*}
\sum_{\la\in    \Ga_n}F_{\la}\leq\norm{f}_1m_{\p,n} \ln n.
\end{eqnarray*}
This is exactly what we need for the first part provided that $\theta$ is an absolute constant and $\mu>1$. 
Now we go back to $LDSM$.
Applying the H\"older inequality again one obtains,
$$LDSM \leq (1+\e)\sum_{\la \in \Ga_n} \E(|\hb_\la-\be_\la|^{2p})^{1/p}
\P(N_\la-nF_\la \geq (1-\theta)\mu \ln n )^{1/q}.$$ 
To deal with  this term, we state
the following result.
\begin{Lemma}
\label{nombredepoints} There exists an absolute constant $0<\theta<1$ such that if $nF_\la \leq \theta \mu \ln n $, then,  for all $n$ such that $(1-\theta)\mu \ln n \geq 2$,
$$\P(N_\la-nF_\la \geq (1-\theta)\mu \ln n) \leq F_\la n^{-\gamma}.$$
\end{Lemma}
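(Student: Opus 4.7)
Since $N_\la$ is Poisson distributed with mean $\Lambda := nF_\la$, the lemma is a Poisson tail bound under the hypothesis $\Lambda\leq\theta\mu\ln n$. The only non-routine issue is that the right-hand side must contain the factor $F_\la=\Lambda/n$, so a plain Chernoff estimate (which produces only exponential decay in $\ln n$) is not sufficient: one must extract one extra factor of $\Lambda$ from the tail probability while keeping enough exponential decay to dominate $n^{\gamma+1}$.

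The case $F_\la=0$ is trivial, so assume $\Lambda>0$ and set $k:=\lceil\Lambda+(1-\theta)\mu\ln n\rceil$, which is $\geq 2$ by the assumption $(1-\theta)\mu\ln n\geq 2$. Since $x\mapsto x/(x+c)$ is increasing for $c>0$, $\Lambda\leq\theta\mu\ln n$ immediately gives $\Lambda/k\leq\theta$. The elementary bound $\P(N_\la\geq k)\leq\Lambda^k/k!$ (from $\sum_{j\geq k}\Lambda^j/j!\leq (\Lambda^k/k!)e^{\Lambda}$) combined with Stirling $k!\geq(k/e)^k$ yields
$$
\P(N_\la\geq k)\;\leq\;\Bigl(\frac{e\Lambda}{k}\Bigr)^{k}\;=\;\Lambda\cdot\frac{e}{k}\Bigl(\frac{e\Lambda}{k}\Bigr)^{k-1}\;\leq\;\Lambda\cdot\frac{e}{k}(e\theta)^{k-1},
$$
where I peeled off one factor of $\Lambda$ and used $e\Lambda/k\leq e\theta$.

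Now choose $\theta$ to be an absolute constant in $(0,1/e)$, so that $e\theta<1$, and set $\kappa:=-\ln(e\theta)=-1-\ln\theta>0$. Since $k-1\geq(1-\theta)\mu\ln n-1$, one has $(e\theta)^{k-1}\leq e^{\kappa}n^{-\kappa(1-\theta)\mu}$, and hence
$$
\P(N_\la\geq k)\;\leq\;C_\theta\,\Lambda\,n^{-\kappa(1-\theta)\mu}
$$
for an absolute constant $C_\theta$ (essentially $e^{\kappa+1}/2$). Recalling $\mu=(\sqrt 6+\tfrac{1}{3})\gamma$ and $\Lambda=nF_\la$, the desired bound $F_\la n^{-\gamma}$ will follow as soon as $\kappa(1-\theta)(\sqrt 6+\tfrac{1}{3})\gamma\geq\gamma+1$ with sufficient slack to absorb $C_\theta$; the hypothesis $(1-\theta)\mu\ln n\geq 2$ gives a quantitative lower bound on $\ln n$ so that only a constant amount of slack is needed. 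Since $\gamma>1$ gives $(\gamma+1)/\gamma\leq 2$, it suffices to choose $\theta$ small enough that $\kappa(1-\theta)(\sqrt 6+\tfrac{1}{3})$ exceeds $2$ by a fixed margin: for instance $\theta=e^{-5}$ gives $\kappa=4$ and $\kappa(1-\theta)(\sqrt 6+\tfrac{1}{3})\approx 11$, which works with plenty of room. The main (and only delicate) obstacle is the very first manipulation, rewriting the Poisson tail so that a single $\Lambda$ factors out linearly without destroying the exponential decay that must beat $n^{\gamma+1}$; once this factorisation is in hand, choosing $\theta$ as a small absolute constant is just arithmetic.
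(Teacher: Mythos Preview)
Your argument is correct, but the route differs from the paper's. The paper splits according to the size of $nF_\la$: when $nF_\la\geq n^{-\gamma-1}$ it applies the Bernstein-type exponential inequality for Poisson deviations to get a bound $n^{-3(1-\theta)^2\mu/(2(2\theta+1))}$, and then fixes $\theta$ via the equation $\frac{3(1-\theta)^2}{2(2\theta+1)}(\sqrt6+\tfrac13)=4$ so that this is at most $n^{-2\gamma-2}\leq F_\la n^{-\gamma}$; when $nF_\la\leq n^{-\gamma-1}$ it simply uses $\P(N_\la\geq 2)\leq (nF_\la)^2\leq F_\la n^{-\gamma}$. Your approach avoids the case split by peeling off exactly one factor of $\Lambda=nF_\la$ from the Chernoff--Stirling bound $(e\Lambda/k)^k$ and absorbing the rest into $(e\theta)^{k-1}$ with $\theta<1/e$. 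This is a clean unified argument and in fact makes transparent why the factor $F_\la$ should appear, whereas the paper's proof recovers that factor by two separate mechanisms (comparison with $n^{-2\gamma-2}$ in one case, the crude quadratic bound in the other). The price you pay is a somewhat smaller admissible $\theta$ and a slightly informal treatment of the constant $C_\theta$; note in particular that the lower bound on $\ln n$ furnished by $(1-\theta)\mu\ln n\geq 2$ depends on $\gamma$ through $\mu$, so when you verify $C_\theta\leq n^{\kappa(1-\theta)\mu-\gamma-1}$ you should check that the product of this lower bound with the exponent stays uniformly bounded below over $\gamma>1$, which it does with room to spare for $\theta=e^{-5}$.
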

\begin{proof}
We use the same classical inequalities (see
Kingman's book \cite{kin} or  equation (5.2) of \cite{ptrfpois}). 
$$\P(N_\la-nF_\la \geq (1-\theta)\mu \ln n)
\leq\exp\left(-\frac{((1-\theta)\mu\ln n)^2}{2(nF_\la+(1-\theta)\mu\ln n/3)}\right)\leq
n^{-\frac{3(1-\theta)^2}{2(2\theta+1)} \mu}.$$
If $nF_\la \geq n^{-\gamma-1}$, then provided that  $\frac{3(1-\theta)^2}{2(2\theta+1)} \mu \geq
2 \gamma+2,$ one has the result. This imposes the value of $\theta$. Indeed since $$\frac{3(1-\theta)^2}{2(2\theta+1)} \mu = \frac{3(1-\theta)^2}{2(2\theta+1)}
(\sqrt{6}+1/3)\ga$$ one takes $\theta$ such that  $$\frac{3(1-\theta)^2}{2(2\theta+1)}(\sqrt{6}+1/3)=4.$$
If  $nF_\la \leq n^{-\gamma-1}$, 
\begin{multline}\label{n2}
\P(N_\la-nF_\la \geq (1-\theta)\mu \ln n)\leq \P(N_\la >(1-\theta) \mu
\ln n)\leq \P(N_\la \geq 2)\\ 
\leq \sum_{k\geq 2} \frac{(nF_\la)^k}{k!} e^{-n F_\la}\leq (nF_\la)^2
\leq F_\la n^{-\gamma}.
\end{multline}
\end{proof} 
We  apply  Lemma  \ref{nombredepoints}   to  bound  the  deviation  and  Lemma
\ref{moment2} to bound $\E(|\hb_\la-\be_\la|^{2p})$. Hence,
$$LDSM \leq \square (1+\e) p^2  n^{-\gamma/q}\sum_{\la \in \Ga_n}
\left(V_{\la,n}+\left[\frac{\norm{\p_\la}_\infty}{n}\right]^{2-2/p}V_{\la,n}^{1/p}\right)
F_\la^{1/q}.$$
Since $\norm{\p_\la}_\infty\leq c_{\p,n}\sqrt{n}$,
$$LDSM \leq \square (1+\e) p^2 c_{\p,n}^2 n^{-\gamma/q}\sum_{\la \in \Ga_n} (F_\la^{1+1/q}+F_\la).$$
Finally, as previously, by using (\ref{majoF})
$$LDSM
\leq
 \square (1+\e) p^2 c_{\p,n}^{2} m_{\p,n} n^{-\gamma/q} \ln (n)(\norm{f}_1)\max(\norm{f}_1, 1)^{1/q}.$$
%%%%%%%%%%%%%%%%%%%%%%%%%%%%%%%%%%%%%%%%%%%
\subsection{Proof of Theorem \ref{inegoraclelavraie}}
At first, we apply Theorem \ref{inegoracle} with $c_{\p,n}=\|\p\|_{\infty}2^{j_0/2}n^{-1/2}$.
For the last term, we want to prove that one can always find $q$ and $\e$ such
that $2^{j_0}n^{-\ga/(q(1+\e))-1}\ln(n)=o(n^{-1})$.  But if $\ga  >c$ then one
can always  find $q>1$  and $\e>0$ such  that $\ga>cq(1+\e)$ and  this implies
also that  $\ga>1+\e$. So,  by exchanging the  infimum and the  expectation we
obtain that
\begin{multline*}
\E(\normp{\tilde{f}_{n,\gamma}-f}^2)\leq (1+2\e^{-1})\inf_{m\subset \Ga_n} \left\{(1+2\e^{-1})\sum_{\la\not\in m} \be_\la^2+\sum_{\la\in m} [\e V_{\la,n} + \E(\eta_{\la,\gamma}^2)]\right\}\\ 
+ \frac{C_2(\gamma,\|f\|_1, c,c', \p)}{n}.
\end{multline*}
But for all $\delta>0$, $$\E(\eta_{\la,\gamma}^2)\leq (1+\delta) 2 \ga \ln n \E(\tilde{V}_{\la,n})+ (1+\delta^{-1})\left(\frac{\ga \ln n}{3n}\right)^2\norm{\p_\la}_\infty^2.$$
Moreover $$\E(\tilde{V}_{\la,n})\leq (1+\delta) V_{\la,n} +(1+\delta^{-1}) 3 \gamma \ln n \frac{\norm{\p_\la}_\infty^2}{n^2}.$$
So, finally for all $\delta>0$,
\begin{multline}\label{init}
\E(\normp{\tilde{f}_{n,\gamma}-f}^2)\leq (1+2\e^{-1})\\\inf_{m\subset \Ga_n} \left\{(1+2\e^{-1})\sum_{\la\not\in m} \be_\la^2+\sum_{\la\in m} [\e + (1+\delta)^2 2 \ga \ln n] V_{\la,n} + c(\delta,\ga)\sum_{\la\in m}\left(\frac{\ln n \norm{\p_\la}_\infty}{n}\right)^2\right\}\\ 
+ \frac{C_2(\gamma,\|f\|_1, c,c', \p)}{n},
\end{multline}
where $c(\delta,\ga)$ is a positive constant. One needs the following lemma. 
\begin{Lemma}\label{ecra}
We set $$S_\p=\max\{\sup_{x\in supp(\phi)}|\phi(x)|,\sup_{x\in
 supp(\psi)}|\psi(x)|\}$$ and $$I_\p=\min\{\inf_{x\in
 supp(\phi)}|\phi(x)|,\inf_{x\in
 supp(\psi)}|\psi(x)|\}.$$
Using (\ref{minophi}), we define $\Theta_\p=\frac{S_\p^2}{I_\p^2}.$
We have, for all $\la \in \La$, 
\begin{itemize}
\item[-] if 
$F_\la\leq \Theta_\p\frac{\ln(n)}{n},$
then
$\be_\la^2\leq\Theta_\p^2\si_\la^2\frac{\ln(n)}{n},$\\
\item[-] if $F_\la>\Theta_\p\frac{\ln(n)}{n},$
then                                $\norm{\p_\la}_{\infty}\frac{\ln(n)}{n}\leq
\si_\la\sqrt{\frac{\ln(n)}{n}}.$
\end{itemize}
\end{Lemma}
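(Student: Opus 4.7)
The plan is to establish two elementary sandwich bounds on $\p_\la$ restricted to its support, from which both assertions follow by direct algebraic manipulation. For $\la=(j,k)$ with $j\geq 0$ one has $\p_\la(x)=2^{j/2}\psi(2^j x - k)$, so on $supp(\p_\la)$ the pointwise estimates $I_\p\cdot 2^{j/2}\leq |\p_\la(x)|\leq S_\p\cdot 2^{j/2}$ hold by (\ref{minophi}) and the definition of $S_\p$; the analogous estimates at the coarse level $j=-1$ (where $\p_\la = \phi_k$) are $I_\p\leq|\p_\la(x)|\leq S_\p$ on $supp(\p_\la)$.

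From these bounds three consequences follow at once. Write $D_j=2^{j}$ if $j\geq 0$ and $D_{-1}=1$. Then (i) $\norm{\p_\la}_\infty^2\leq S_\p^2\, D_j$; (ii) since $f\geq 0$,
$$\si_\la^2=\int_{supp(\p_\la)}\p_\la^2(x)f(x)\,dx\geq I_\p^2\, D_j\, F_\la;$$
and (iii) $|\be_\la|\leq\int|\p_\la|\,f\leq\norm{\p_\la}_\infty\, F_\la$, so $\be_\la^2\leq S_\p^2\, D_j\, F_\la^2$.

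Combining (i) and (iii) with (ii) gives the key ratio bound $\be_\la^2\leq \Theta_\p\,\si_\la^2\,F_\la$. Substituting the hypothesis $F_\la\leq\Theta_\p\ln(n)/n$ yields the first assertion $\be_\la^2\leq\Theta_\p^{\,2}\,\si_\la^2\,\ln(n)/n$. For the second assertion, squaring the target inequality reduces it to showing $\norm{\p_\la}_\infty^2\,\ln(n)/n\leq\si_\la^2$; using (i) and (ii), a sufficient condition is $S_\p^2\, D_j\,\ln(n)/n\leq I_\p^2\, D_j\, F_\la$, i.e., $F_\la\geq\Theta_\p\,\ln(n)/n$, which is exactly the hypothesis.

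I do not anticipate any serious obstacle; the lemma is ultimately a bookkeeping exercise comparing $\be_\la$, $\si_\la$ and $\norm{\p_\la}_\infty$ through the common factor $F_\la$. The only mild subtlety is to treat the coarse level $j=-1$ on the same footing as $j\geq 0$, and for this the uniform lower bound on $|\phi|$ provided by (\ref{minophi}) is precisely the tool needed.
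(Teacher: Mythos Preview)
Your proof is correct and follows essentially the same route as the paper: both rely on the three elementary bounds $\norm{\p_\la}_\infty\leq S_\p 2^{j/2}$, $\si_\la^2\geq I_\p^2 2^{j}F_\la$, and $|\be_\la|\leq S_\p 2^{j/2}F_\la$ (for $j\geq 0$, with the obvious analogue at $j=-1$), and then combine them algebraically. Your presentation is slightly tidier in that you introduce $D_j$ to treat both levels uniformly and isolate the intermediate inequality $\be_\la^2\leq \Theta_\p\,\si_\la^2\,F_\la$, whereas the paper works directly with $|\be_\la|$ and handles $j=-1$ by remarking that the argument is similar; but the substance is identical.
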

\begin{proof}
We note $\la=(j,k)$ and assume that $j\geq 0$ (arguments are similar for
$j=-1$).\\ If 
$F_\la\leq\Theta_\p\frac{\ln(n)}{n}$, we have
\begin{eqnarray*}
|\be_\la|&\leq &S_\psi 2^{j/2}F_\la\\
&\leq&S_\p2^{j/2}\sqrt{F_\la}\sqrt{\Theta_\p}\sqrt{\frac{\ln(n)}{n}}\\
&\leq&S_\p I^{-1}_\p\sqrt{\Theta_\p}\si_\la\sqrt{\frac{\ln(n)}{n}}\\
&\leq&\Theta_\p\si_\la\sqrt{\frac{\ln(n)}{n}},
\end{eqnarray*}
since
$$\si_\la^2\geq I^2_\p 2^jF_\la.$$
For the second point, observe that
\begin{eqnarray*}
\si_\la\sqrt{\frac{\ln(n)}{n}}&\geq&2^{j/2}I_\p\sqrt{\Theta_\p} \frac{\ln(n)}{n}
\end{eqnarray*}
and 
\begin{eqnarray*}
\norm{\psi_\la}_{\infty}\frac{\ln(n)}{n}&\leq&2^{j/2}S_\p\frac{\ln(n)}{n}.
\end{eqnarray*}
\end{proof}
Now let us apply (\ref{init}) for some fixed $\delta,\e$ to 
$$m=\left\{\la\in\Ga_n:\quad\be_\la^2>\Theta_\p^2\frac{\si_\la^2}{n}\ln n\ \right\}.$$
This implies that for all $\la \in m$, $
F_\la>\Theta_\p\frac{\ln(n)}{n}.$
So, since $\Theta_\p\geq 1$,
\begin{multline*}
\E(\normp{\tilde{f}_{n,\gamma}-f}^2)\leq C(\ga) \times\\
\left[\sum_{\la\in\Ga_n}\be_\la^2\indic_{\be_\la^2\leq\Theta_\p^2\frac{\si_\la^2}{n}\ln
n}+\sum_{\la\notin\Gamma_n}\be_\la^2+\sum_{\la\in\Ga_n}\left[
\frac{\ln n}{n} \si_\la^2 +\left(\frac{\ln n
}{n}\right)^2\norm{\p_\la}_\infty^2\right]\indic_{\be_\la^2>\Theta_\p^2\frac{\si_\la^2}{n}\ln
n, \
F_\la>\Theta_\p\frac{\ln(n)}{n}}\right]\\
+\frac{C_2(\gamma,\|f\|_1, c,c', \p)}{n}\\
\leq
C(\gamma)\left[\sum_{\la\in\Ga_n}\left(\be_\la^2\indic_{\be_\la^2\leq
\Theta_\p^2 V_{\la,n}\ln n }+2\ln n
V_{\la,n}\indic_{\be_\la^2>\Theta_\p^2 V_{\la,n}\ln
n}\right)+\sum_{\la\notin\Gamma_n}\be_\la^2\right]+\\
+\frac{C_2(\gamma,\|f\|_1, c,c', \p)}{n}\\
\leq C_1(\gamma)\left[\sum_{\la\in\Ga_n}\min(\be_\la^2,\Theta_\p^2
V_{\la,n}\ln n)+\sum_{\la\notin\Gamma_n}\be_\la^2\right]+\frac{C_2(\gamma,\|f\|_1, c,c', \p)}{n},
\end{multline*}
where $C(\ga)$ and $C_1(\ga)$ are  positive quantities depending only
on $\gamma$.
%%%%%%%%%%%%%%%%%%%%%%%%%%%%%%%%%%%%%%%%%%%
\subsection{Proof of Theorem \ref{maxisets}}
Let us assume that $f$ belongs to
$B_{2,\Gamma}^{\frac{\al}{1+2\al}}(R^{\frac{1}{1+2\al}})\cap       W_\al(R)\cap
\L_1(R)\cap \L_2(R)$. Inequality (\ref{inegoraclelavraie1}) of Theorem~\ref{inegoraclelavraie} implies for all $n$,
\begin{multline*}
\E(\normp{\tilde{f}_{n,\gamma}-f}^2)\leq C_1(\gamma,\p)\left[\sum_{\la\in\Gamma_n}\left(\be_\la^21_{|\be_\la|\leq\si_\la\sqrt{\frac{\ln n}{n}}}+
V_{\la,n}\ln n1_{|\be_\la|>\si_\la\sqrt{\frac{\ln
n}{n}}}\right)+\sum_{\la\not\in\Gamma_n}\be_\la^2\right]+\\+\frac{C_2(\gamma,R,
c,c', \p)}{n}.
\end{multline*}
But
\begin{eqnarray*}
\sum_{\la\in\Gamma_n}V_{\la,n}\ln n1_{|\be_\la|>\si_\la\sqrt{\frac{\ln n}{n}}}&=&\sum_{\la\in\Gamma_n}\si_\la^2\frac{\ln n}{n}\sum_{k=0}^{+\infty}1_{2^{-k-1}\be_\la^2\leq\si_\la^2\frac{\ln n}{n}<2^{-k}\be_\la^2}\\
&\leq&\sum_{k=0}^{+\infty}2^{-k}\sum_{\la\in\Lambda}\be_\la^21_{|\be_\la|\leq 2^{(k+1)/2}\si_\la\sqrt{\frac{\ln n}{n}}}\\
&\leq&\sum_{k=0}^{+\infty}2^{-k}R^{\frac{2}{1+2\al}}\left(2^{(k+1)/2}\sqrt{\frac{\ln n}{n}}\right)^{\frac{4\al}{1+2\al}}\\
&\leq&R^{\frac{2}{1+2\al}}\rho_{n,\al}^2\sum_{k=0}^{+\infty}2^{-k+\frac{2\al(k+1)}{1+2\al}}
\end{eqnarray*}
and
$$\sum_{\la\not\in\Gamma_n}\be_\la^2\leq R^{\frac{2}{1+2\al}}\rho_{n,\al}^2.$$
So,
$$
\E(\normp{\tilde{f}_{n,\gamma}-f}^2)\leq C(\gamma,\p,\al)R^{\frac{2}{1+2\al}}\rho_{n,\al}^2+\frac{C_2(\gamma,R, c,c', \p)}{n},$$
where
$C(\gamma,\p,\al)$ depends on $\gamma$, the basis and $\al$. Hence,
$$\E(\normp{\tilde{f}_{n,\gamma}-f}^2)\leq C(\gamma,\p,\al)R^{\frac{2}{1+2\al}}\rho_{n,\al}^2 (1+o_n(1))$$
and $f$ belongs to $MS(\tilde
f_\gamma,\rho_\al)(R')$ for $R'$ large enough.
\\\\
Conversely, let us suppose that $f$ belongs to $MS(\tilde
f_\gamma,\rho_\al)(R')\cap \L_1(R')\cap \L_2(R').$
Then, for
any $n$,
$$\E(\normp{\tilde{f}_{n,\gamma}-f}^2)\leq {R'}^2\left(\frac{\ln
n}{n}\right)^{\frac{2\al}{1+2\al}}.$$
 Consequently, for
any $n$, $$\sum_{\la\not\in\Gamma_n}\be_{\la}^2 \leq {R'}^2\left(\frac{\ln
n}{n}\right)^{\frac{2\al}{1+2\al}}.$$
This implies that $f$ belongs to  
$B_{2,\Gamma}^{\frac{\al}{1+2\al}}(R')$.\\
Now, we want to prove
that $f\in W_\al(R)$ for $R>0$. We have
$$
\sum_{\la\in \La} \be_\la^2 \indic_{|\be_\la|\leq\si_\la\sqrt{\frac{\ga\ln n}{2n}}} \leq 
\sum_{\la\not \in \Ga_n} \be_\la^2 + \sum_{\la\in \Ga_n} \be_\la^2
\indic_{|\be_\la|\leq\si_\la\sqrt{\frac{\ga\ln n}{2n}}}.$$
But $\tb_\la=\hb_\la \indic_{|\hb_\la|\geq\eta_{\la,\gamma}}$, so, 
$$|\be_\la|\indic_{|\be_\la|\leq \frac{\eta_{\la,\gamma}}{2}}\leq |\be_\la-\tb_\la|.$$
So, for any $n$,
\begin{eqnarray*}
\sum_{\la\in \La} \be_\la^2 \indic_{|\be_\la|\leq\si_\la\sqrt{\frac{\ga\ln n}{2n}}} &\leq &\sum_{\la\not \in \Ga_n} \be_\la^2+ \E\left\{\sum_{\la\in \Ga_n} \be_\la^2
\indic_{|\be_\la|\leq\si_\la\sqrt{\frac{\ga\ln n}{2n}}}[\indic_{|\be_\la|\leq
 \frac{\eta_{\la,\gamma}}{2}}+\indic_{|\be_\la|> \frac{\eta_{\la,\gamma}}{2}}]\right\}\\
&\leq &\sum_{\la\not \in \Ga_n} \be_\la^2+  \sum_{\la\in
\Ga_n}\E[(\tb_\la-\be_\la)^2] + \sum_{\la\in \Ga_n} \be_\la^2
\indic_{|\be_\la|\leq\si_\la\sqrt{\frac{\ga\ln n}{2n}}} \E(\indic_{|\be_\la|> \frac{\eta_{\la,\gamma}}{2}})\\
&\leq &\sum_{\la\not \in \Ga_n} \be_\la^2+  \sum_{\la\in
\Ga_n}\E[(\tb_\la-\be_\la)^2] + \sum_{\la\in \Ga_n} \be_\la^2
\P\left(\si_\la\sqrt{\frac{\ga\ln n}{2n}}> \frac{\eta_{\la,\gamma}}{2}\right)\\
&\leq & \E(\normp{\tilde{f}_{n,\gamma}-f}^2) + \sum_{\la\in \Ga_n} \be_\la^2
\P\left(\si_\la\sqrt{\frac{\ga\ln n}{2n}}> \frac{\eta_{\la,\gamma}}{2}\right).
\end{eqnarray*}
Using Lemma \ref{toutesdev}, 
$$
\P\left(\si_\la\sqrt{\frac{2\ga\ln n}{n}}>\eta_{\la,\gamma}\right)\leq \P(\tilde{V}_{\la,n} \leq V_{\la,n})\leq n^{-\gamma}
$$ 
and 
$$\sum_{\la\in \La} \be_\la^2 \indic_{|\be_\la|\leq\si_\la\sqrt{\frac{\ga\ln n}{2n}}}\leq (R')^2\left(\sqrt{\frac{\ln n}{n}}\right)^{\frac{4\al}{1+2\al}}+\|f\|_{\tilde \p}^2n^{-\ga}.$$
Since this is true for every $n$, we have for any $t\leq 1$,
\begin{equation}\label{gammamin}
\sum_{\la\in \La} \be_\la^2 \indic_{|\be_\la|\leq \si_\la t}\leq R^{\frac{2}{1+2\al}}\left(\sqrt{\frac{2}{\ga}}\ t\right)^{\frac{4\al}{1+2\al}},
\end{equation}
where $R$ is a constant large enough depending on $R'$. Note that
$$\sup_{t\geq 1}t^{\frac{-4\al}{1+2\al}}\sum_{\la\in\Lambda}\be_\la^2
\indic_{|\be_\la|\leq \si_\la t}\leq \|f\|_{\tilde \p}^2.$$
We conclude that $$f\in B^{\frac{\al}{1+2\al}}_{2,\Gamma}(R)\cap W_\al(R)$$
for $R$ large enough.
%%%%%%%%%%%%%%%%%%%%%%%%%%%%%%%%%%%%%%%%%%%
\subsection{Proof of Proposition \ref{contreexlinfini}}
Since $\beta<1/2$, $f_\beta\in \L_1\cap \L_2$. If the Haar basis is considered, the wavelet coefficients $\beta_{j,k}$ of $f_\beta$ can be calculated and we obtain for any $j\geq 0$, for any $k\not\in\left\{0,\dots,2^j-1\right\}$, $\beta_{j,k}=0$
and for any $j\geq 0$, for any $k\in\left\{0,\dots,2^j-1\right\}$,
$$\beta_{j,k}=(1-\beta)^{-1}2^{-j\left(\frac{1}{2}-\beta\right)}\left(2\left(k+\frac{1}{2}\right)^{1-\beta}-k^{1-\beta}-\left(k+1\right)^{1-\beta}\right)$$
and there exists a constant $0<c_{1,\beta}<\infty$ only depending on $\beta$ such that 
$$\lim_{k\to\infty}2^{j\left(\frac{1}{2}-\beta\right)}k^{1+\beta}
\beta_{j,k}=c_{1,\beta}.$$
Moreover the $\beta_{j,k}$'s are strictly positive. Consequently they
can         be         bounded         up         and         below,  up to a constant,        by
$2^{-j\left(\frac{1}{2}-\beta\right)}k^{-(1+\beta)}$. 
Similarly, for any $j\geq 0$, for any $k\in\left\{0,\dots,2^j-1\right\}$,
$$\si^2_{j,k}=(1-\beta)^{-1}2^{j\beta}\left((k+1)^{1-\beta}-k^{1-\beta}\right).$$
and there exists a constant $0<c_{2,\beta}<\infty$ only depending on $\beta$ such that 
$$\lim_{k\to\infty}2^{-j\beta}k^{\beta}\si^2_{j,k}=c_{2,\beta}.$$
There exist two constants $\kappa(\beta)$ and $\kappa'(\beta)$ only depending on $\beta$ such that for any $0<t<1$,
$$|\beta_{j,k}|\leq t\si_{j,k} \Rightarrow k\geq \kappa(\beta)t^{-\frac{2}{\beta+2}}2^{j\left(\frac{\beta-1}{\beta+2}\right)}$$
and
$$
\kappa(\beta)t^{-\frac{2}{\beta+2}}2^{j\left(\frac{\beta-1}{\beta+2}\right)}\geq2^j\iff
2^j\leq\kappa'(\beta)t^{-\frac{2}{3}}.$$
So,  if  $2^j\leq  \kappa'(\beta)t^{-\frac{2}{3}},$ since  $\beta_{jk}=0$  for
$k\geq 2^j$, $$\sum_{k\in\Z}\beta_{j,k}^21_{\beta_{j,k}\leq t\si_{j,k}}=0.$$
We obtain
$$\sum_{\la\in\Lambda}\beta_\la^21_{|\be_\la|\leq
t\si_\la}
\leq C(\beta)\sum_{j=-1}^{+\infty}2^{-j(1-2\beta)}1_{2^j> \kappa'(\beta)t^{-\frac{2}{3}}}\sum_{k=1}^{2^j-1}k^{-2-2\beta}\leq C'(\beta) t^{\frac{2-4\beta}{3}},$$
where  $C(\beta)$  and $C'(\beta)$  denote  two  constants  only depending  on
$\beta$.   So,   for   any   $0<\al<\frac{1}{4}$,  if   we   take   $\beta\leq
\frac{1-4\al}{2+4\al}$, then, for any $0<t<1$, $t^{\frac{2-4\beta}{3}}\leq t^{\frac{4\al}{1+2\al}}$. Finally, there exists $c\geq 1$, such that for any $n$,
$$\sum_{\la\not\in\Gamma_n}\be_\la^2\leq R^2\rho_{n,\al}^2$$ where $R>0$. And in this case,
$$f_\beta\not\in \L_{\infty},\quad f_\beta\in {\cal B}^{\frac{\al}{c(1+2\al)}}_{2,\infty}\cap W_\al:=MS(\tilde f_\gamma,\rho_\al).$$
%%%%%%%%%%%%%%%%%%%%%%%%%%%%%%%%%%%%%%%%%%%
\subsection{Proof of Theorem \ref{minimaxnoncompact}}
Since 
\begin{equation}\label{majosi}
\forall\;\la=(j,k),\quad                                          \si_\la^2\leq
 \min\left[\max(2^j;1)\norm{\p}_{\infty}^2F_{j,k} \ ; \ \norm{f}_{\infty}\norm{\p}_{2}^2\right],
\end{equation}
where $\p\in\{\phi,\psi\}$ according to the value of $j$, we have for any $t>0$
and any $\tilde J\geq 0$
\begin{eqnarray*}
\sum_{\la} \be_\la^2 \indic_{|\be_\la|\leq \si_\la t }
&\leq&\sum_{j<       \tilde       J}\sum_k\si_{j,k}^2t^2+\sum_{j\geq\tilde
J}\sum_k\be_{j,k}^2\left(\frac{\si_{j,k} t}{|\be_{j,k}|}\right)^{2-p}
\\
&\leq&\max(\norm{\phi}_{\infty}^2,\norm{\psi}_{\infty}^2)t^2\sum_{j<       \tilde       J}2^j\sum_kF_{j,k}+\sum_{j\geq\tilde
J}\sum_k\be_{j,k}^2\left(\frac{t\sqrt{\norm{f}_{\infty}\norm{\psi}_{2}^2}}{|\be_{j,k}|}\right)^{2-p}\\
&\leq& c(\p,R')\left(2^{\tilde J}t^2+t^{2-p}\sum_{j\geq\tilde
J}\sum_k|\be_{j,k}|^p\right),
\end{eqnarray*}
where $c(\p,R')$ is  a constant only depending on the basis  and on $R'$. Now,
let us assume that $f$  belongs to ${\cal B}^\al_{p,\infty}(R)$ (that contains
${\cal B}^\al_{p,q}(R)$, see Section \ref{biorthogonal}), with
$\al+\frac{1}{2}-\frac{1}{p}>0$. Then, 
$$\sum_{\la}          \be_\la^2          \indic_{|\be_\la|\leq
\si_\la t}\leq c_1(\p,\al,p,R')\left(2^{\tilde
J}t^2+t^{2-p}R^p2^{-\tilde Jp(\al+\frac{1}{2}-\frac{1}{p})}\right).$$
where $c_1(\p,\al,p,R')$ depends on the basis, $\al$, $p$ and $R'$. With $\tilde
J$ such that
$$2^{\tilde J}\leq R^{\frac{2}{1+2\al}} t^{\frac{-2}{1+2\al}}<2^{\tilde J+1},$$
$$\sum_{\la}          \be_\la^2          \indic_{|\be_\la|\leq
\si_\la t}\leq c_2(\p,\al,p,R')
 R^{\frac{2}{1+2\al}} t^{\frac{4\al}{1+2\al}}$$
where $c_2(\p,\al,p,R')$  depends on the basis,  $\al$, $p$ and  $R'$. So, $f$
belongs to $W_{\al}(R'')$ for $R''$ large enough.\\
Furthermore, using (\ref{inclusion}), if $p\leq 2$ and $$\al\left(1-\frac1{c(1+2\alpha)}\right)\geq \frac1p -\frac12$$
$${\cal B}^\al_{p,\infty}(R)\subset{\cal
B}^{\frac{\al}{c(1+2\al)}}_{2,\infty}(R).$$
Finally, for $R''$ large enough,
$${\cal B}^\al_{p,q}(R)\subset{\cal B}^\al_{p,\infty}(R)\subset{\cal
B}^{\frac{\al}{c(1+2\al)}}_{2,\infty}(R'') \cap W_{\al}(R'').$$
We recall
$$MS(\tilde f_\gamma,\rho_\al):={\cal
B}^{\frac{\al}{c(1+2\al)}}_{2,\infty} \cap W_{\al},$$
which  proves (\ref{risqueBesov}).\\  
Moreover
$$\inf_{\hat{f}} \sup_{f\in \mathcal{B}_{p.q}^\al(R)\cap \mathcal{L}_{1,2,\infty}(R')}
\E(\norm{\hat{f}-f}^2) \geq C(\al,R,R') n^{-\frac{2\al}{2\al+1}},$$
where $C(\al,R,R')$ is a constant. Indeed, using computations similar to those of
Theorem 2 of \cite{djkp}, it is easy to prove that if $K$ is a compact interval and
${\cal B}^\al_{p,q,K}(R)$ is the set of functions supported by $K$ and belonging to ${\cal
B}^\al_{p,q}(R)$ the minimax risk associated
with ${\cal  B}^\al_{p,q,K}(R)$ is larger than $n^{-2\al/(1+2\al)}$  up to a
constant. \\
But (\ref{adapt*}) implies that $\al>\al^*$ and $p>p^*$ satisfy (\ref{sural}).
This proves the adaptive minimax properties of $\tilde f_\gamma$ stated in
the theorem. 
%%%%%%%%%%%%%%%%%%%%%%%%%%%%%%%%%%%%%%%%%%%
\subsection{Proof of Theorem \ref{contrexem}}
The proof  is established for $p<\infty$.  Similar arguments lead  to the same
results for $p=\infty$. Let us fix real numbers $n_*>1$ and $f_*>1$ and let us define the following increasing sequence 
$$a_0=0,\quad a_1=4\quad \mbox{ and } \forall \ l \geq 1, \ a_{l+1}=2a_l+2^{\lceil n_* l\rceil+1}.$$
Let $b_l=\frac{a_{l+1}}{2}-1.$
Let $I^+_\jk = [k2^{-j}, (k+1/2)2^{-j}]$ and $I^-_\jk =[(k+1/2)2^{-j},(k+1)2^{-j}]$.
Set for all $x\in \R$,  $$f_l(x) = \sum_{m=a_l}^{b_l} 2^{(1-f_*)l+1} \indic_{I^+_{l,m}}$$ and $$f(x) =\sum_{l=0}^{+\infty} f_l(x).$$ 
The  $f_l$'s have  support  in $S_l=[a_l2^{-l},  a_{l+1}2^{-(l+1)}[$. All  the
$S_l$'s are disjoint and  we can prove by an easy induction  that all the $a_l
2^{-l}$'s are even positive integer numbers (indeed, $a_{l+1}2^{-(l+1)}= 2^{\lceil n_* l\rceil-l}+a_l2^{-l}$ and $\lceil n_* l\rceil-l>0$ if $l\not=0$). 
\\\\
Now, let us  compute the wavelet coefficients associated  with $f$ denoted
$\be_\jk$ for $j\geq 0$ and for any $k\in\Z$ and $\al_k=\be_{-1,k}$ for any $k\in\Z$. We are working with the Haar basis. Recall that the spaces considered are viewed as sequence spaces. 

For the  $\be_\jk$'s, let  us remark that  $supp(\p_\jk)$ is  always included
between two  successive integers, consequently  there exists a  unique $l_\jk$
such  that  $supp(\p_\jk)\subset  S_{l_\jk}$.  So,  $$\be_\jk=\int  f_{l_\jk}
\p_{\jk}.$$  Moreover,  if  $j\not=l_\jk$,  the coefficient  is  zero:  either
$j>l_\jk$  and $\p_\jk$ sees  only one  flat line,  or $j<l_\jk$  and $\p_\jk$
integrates the  same number of flat  pieces in $I^+_\jk$  and
$I^-_\jk$ ; since
the pieces have all the same level, this is also 0. 
Finally, for $j=l_\jk$, the computation is easy and we find
$$\be_\jk= 2^{(1-f_*)j+1}\int_{I^+_\jk} 2^{j/2} [\indic_{I^+_\jk}-\indic_{I^-_\jk}]  \times\indic_{a_j\leq k\leq b_j} = 2^{-j(f_*-1/2)} \indic_{a_j\leq k\leq b_j}.$$
For the coefficients $\al_k$'s, there exists also a unique $l_k$ such that $supp (\p_{-1,k})\subset S_{l_k}$ and
$$\al_k=2^{(1-f_*)l_k+1}\frac12=\sum_l 2^{(1-f_*)l}\indic_{a_l2^{-l}\leq k< a_{l+1} 2^{-(l+1)}}.$$
Now, we want to compute $\si_\jk$ when $\be_\jk\not = 0$. If $j\geq 0$
$$F_{j,k}=\int_{supp(\psi_{j,k})}f(x)dx=2^{j(1-f_*)}2^{-j}=2^{-jf_*},$$
$$\si_{j,k}^2=\int\psi_{j,k}^2(x)f(x)dx=2^j\int_{supp(\psi_{j,k})}f(x)dx=2^jF_{j,k}=2^{j(1-f_*)}.$$
If $j=-1$ $$\si_{\jk} =F_\jk=\al_k.$$
\\
Now, we fix the parameter $n_*$ and $f_*$ such that
\begin{enumerate}
\item $\norm{f}_1<\infty$, $\norm{f}_2<\infty$, $\norm{f}_{\infty}<\infty$,
\item $f\in {\cal B}^{\al}_{p,\infty}$,
\item $f\notin W_\al$.
\end{enumerate}
Since $f_*>1$, then $\norm{f}_{\infty}<\infty$.
We have
\begin{equation}\label{ce1}
\norm{f}_1=\sum_{l=0}^{+\infty}\sum_{m=a_l}^{b_l}2^{(1-f_*)l+1}2^{-l-1}=\sum_{l=0}^{+\infty}2^{\lceil
n_*l\rceil}2^{-f_*l}<\infty\iff f_*>n_*. 
\end{equation}
We have for all $j\geq 0 $
\begin{eqnarray*}
\sum_k|\be_{j,k}|^p&=&\sum_{k=a_j}^{b_j}|2^{-j(f_*-1/2)}|^p\\
&=&2^{\lceil n_*j\rceil}2^{jp/2}2^{-jf_*p}.
\end{eqnarray*}
Then, 
\begin{eqnarray}\label{ce2}
f\in {\cal B}^{\al}_{p,\infty}&\iff & \exists R>0, \forall j\geq 0 , 2^{j(n_*+p/2-f_*p)}\leq R^p 2^{-jp(\al+1/2-1/p)}\nonumber\\
&\iff&n_*+p/2-f_*p\leq -p\al-p/2+1\nonumber\\
&\iff&n_*\leq pf_*-p+1-p\al.
\end{eqnarray}
Indeed, note that we have
\begin{eqnarray*}
\sum_{k\in\Z}|\al_k|^p&=&\sum_{l\geq 0}2^{\lceil n_*l\rceil-l}\left(2^{(1-f_*)l}\right)^p<\infty
\end{eqnarray*}
if and only if $n_*-1+p-f_*p<0$, which is true as soon as $f_*>n_*$. Note also that
$$
\norm{f}_2<\infty\iff 2f_*>1+n_*,
$$
which is also true as soon as $f_*>n_*$.\\
Now, we would like to build $f$ such that $f$ does not belong to $W_\al$.
We have for any $t<1$,
\begin{eqnarray*}
\sum_{k=a_j}^{b_j}\be_{j,k}^2\indic_{|\be_{j,k}|\leq t\si_{j,k}}&=&\sum_{k=a_j}^{b_j}2^{-2j(f_*-1/2)}\indic_{2^{-j(f_*-1/2)}\leq t2^{j(1-f_*)/2}}\\
&=&2^{j(1-2f_*)}2^{\lceil n_*j\rceil}\indic_{2^{-jf_*}\leq t^2}.
\end{eqnarray*}
So, with $j=\lceil \log_2 (t^{-2/f_*})\rceil$,
\begin{eqnarray}\label{ce4}
\sup_{t<1}t^{-4\al/(1+2\al)}\sum_j\sum_{k=a_j}^{b_j}\be_{j,k}^2\indic_{|\be_{j,k}|\leq t\si_{j,k}}=+\infty&\Leftarrow&\sup_{t<1}t^{-4\al/(1+2\al)}t^{-2(1+n_*-2f_*)/f_*}=+\infty\nonumber\\
&\iff&-2(1+n_*-2f_*)/f_*<4\al/(1+2\al)\nonumber\\
&\iff&2f_*-n_*-1<\frac{2\al f_*}{1+2\al}\nonumber\\
&\iff&n_*>-1+\frac{2f_*(1+\al)}{1+2\al},
\end{eqnarray}
and in  this case, $f\notin W_\al.$  Now, we choose $n_*>1$  and $f_*>1$ such
that (\ref{ce1}), (\ref{ce2}) and (\ref{ce4}) are satisfied. 
For this purpose, we
take
$$f_*=1+2\al-\delta\in \left](1+2\al)\frac{(p\al+p-2)}{2p\al+p-2\al-2},1+2\al\right[$$
for $\delta\in ]0,\al[$ and $\delta$ small enough. Note that $p>2$ implies
$$(1+2\al)\frac{(p\al+p-2)}{2p\al+p-2\al-2}<1+2\al.$$
%\begin{eqnarray*}
%(1+2\al)\frac{(p\al+p-2)}{2p\al+p-2\al-2}<1+2\al&\iff&\frac{(p\al+p-2)}{2p\al+p-2\al-2}<1\\
%&\iff&p\al+p-2<2p\al+p-2\al-2\\
%&\iff&p>2
%\end{eqnarray*}
We also take
$$n_*=\min(f_*-\delta', pf_*-p+1-p\al)\in ]1,pf_*-p+1-p\al]$$
for $\delta'$ small enough. Note that $$pf_*-p+1-p\al=p(1+2\al-\delta)-p+1-p\al=p\al+1-p\delta>1.$$
With such a choice, we have 
$n_*<f_*$  and   $n_*\leq  pf_*-p+1-p\al$. So   (\ref{ce1}) and
(\ref{ce2})  are satisfied. It remains to check (\ref{ce4}). 
We have
\begin{eqnarray*}
pf_*-p+1-p\al>-1+\frac{2f_*(1+\al)}{1+2\al}&\iff&f_*\left[\frac{2(1+\al)}{1+2\al}-p\right]<2-p-p\al\\
&\iff& f_*(2+2\al-p-2p\al)<(1+2\al)(2-p-p\al)\\
&\iff&f_*>(1+2\al)\frac{(p\al+p-2)}{2p\al+p-2\al-2},
\end{eqnarray*}
and
\begin{eqnarray*}
f_*-\delta'>-1+\frac{2f_*(1+\al)}{1+2\al}&\iff&f_*\left[\frac{2(1+\al)}{1+2\al}-1\right]<1-\delta'\\
&\iff&2(1+\al)f_*-f_*(1+2\al)<(1+2\al)(1-\delta')\\
&\iff &f_*<(1+2\al)(1-\delta'),
\end{eqnarray*}
which is true for $\delta'$ small enough. So (\ref{ce4}) is satisfied, which concludes the proof of the theorem. 
%%%%%%%%%%%%%%%%%%%%%%%%%%%%%%%%%%%%%%%%%%%
\subsection{Proof of Theorem \ref{vitesse}}
The proof is established for $q=\infty$ and $p<\infty$. Similar arguments lead
to the  same results  for $p=\infty$. In  the sequel,  $C$  designates  a constant
depending on $R'$, $\ga$, $c$, $c'$, on the parameters of the Besov ball, on the
basis and
that may change at each line. We have for any $0<t<1$ and any $j\geq 0$,
\begin{eqnarray}\label{CS}
\sum_k\be_{j,k}^21_{|\be_{j,k}|\leq t\si_{j,k}}\leq \left(\sum_k|\be_{j,k}|^p\right)^{\frac{1}{p}}\left(\sum_k|\be_{j,k}|^r1_{|\be_{j,k}|\leq t\si_{j,k}}\right)^{\frac{1}{r}}
\end{eqnarray}
with $\frac{1}{p}+\frac{1}{r}=1.$ So, using (\ref{majosi}),
we have if $f\in\L_{\infty}(R')\cap\L_1(R')\cap{\cal B}^\al_{p,\infty}(R),$ 
\begin{eqnarray*}
\sum_k\be_{j,k}^21_{|\be_{j,k}|\leq t\si_{j,k}}&\leq &C 2^{-j\left(\al+\frac{1}{2}-\frac{1}{p}\right)}\left(\sum_k|\be_{j,k}|(t\si_{j,k})^{r-1}\right)^{\frac{1}{r}}\\&\leq &C 2^{-j\left(\al+\frac{1}{2}-\frac{1}{p}\right)}\left(\sum_k|\be_{j,k}|t^{r-1}\right)^{\frac{1}{r}}\\
&\leq &C 2^{-j\left(\al+\frac{1}{2}-\frac{1}{p}-\frac{1}{2r}\right)}t^{1-\frac{1}{r}}.\\
\end{eqnarray*}
Indeed,
%$$f\in\L_{\infty}\Rightarrow \sup_k|\be_{j,k}|\leq \square 2^{-\frac{j}{2}},$$
\begin{equation}\label{L1}
f\in\L_{1}(R')\Rightarrow \sum_k|\be_{j,k}|\leq C 2^{\frac{j}{2}} 
\end{equation}
 (see \cite{jll}, p. 197). So, for $\al>1/(2p)$, we have for any $t>0$ and any $\tilde J\geq 0$
\begin{eqnarray*}
\sum_{\la}  \be_\la^2   \indic_{|\be_\la|\leq \si_\la t}&=&\sum_{j}\sum_{k}\be_{j,k}^2\indic_{|\be_{j,k}|\leq \si_{j,k}t}\\
&\leq&C\left[t^2\sum_{j<       \tilde       J}2^j\sum_kF_{j,k}+\sum_{j\geq\tilde
J}2^{-j\left(\al+\frac{1}{2}-\frac{1}{p}-\frac{1}{2r}\right)}t^{1-\frac{1}{r}}\right]\mbox{
using (\ref{majosi}) again}\\
&\leq&C\left[t^22^{\tilde J}+2^{-\tilde J\left(\al-\frac{1}{2p}\right)}t^{1-\frac{1}{r}}\right].
\end{eqnarray*}
With 
$$2^{\tilde J}\leq t^{-\frac{1+\frac{1}{r}}{\al+\frac{1}{2}+\frac{1}{2r}}}<2^{\tilde J+1}$$
we have
$$ \sum_{\la} \be_\la^2 \indic_{|\be_\la|\leq \si_\la t}\leq Ct^{\frac{2\al}{\al+\frac{1}{2}+\frac{1}{2r}}}.$$
We obtain 
$$ \sum_{\la}  \be_\la^2   \indic_{|\be_\la|\leq \si_\la t}\leq C t^{\frac{2\al}{\al+1-\frac{1}{2p}}}.$$
So, with $t=\sqrt{\frac{\ln n}{n}}$,
$$\sum_{\la\in\Gamma_n}      \be_\la^2      \indic_{|\be_\la|\leq      \si_\la
\sqrt{\frac{\ln                  n}{n}}}\leq                  C\left(\frac{\ln
n}{n}\right)^{\frac{\al}{\al+1-\frac{1}{2p}}}.$$
Furthermore,
\begin{eqnarray*}
\sum_{\la\in\Gamma_n}V_{\la,n}\ln n1_{|\be_\la|>\si_\la\sqrt{\frac{\ln n}{n}}}&=&\sum_{\la\in\Gamma_n}\si_\la^2\frac{\ln n}{n}\sum_{k=0}^{+\infty}1_{2^{-k-1}\be_\la^2\leq\si_\la^2\frac{\ln n}{n}<2^{-k}\be_\la^2}\\
&\leq&\sum_{k=0}^{+\infty}2^{-k}\sum_{\la\in\Lambda}\be_\la^21_{|\be_\la|\leq 2^{(k+1)/2}\si_\la\sqrt{\frac{\ln n}{n}}}\\
&\leq&C \sum_{k=0}^{+\infty}2^{-k}\left(2^{(k+1)/2}\sqrt{\frac{\ln n}{n}}\right)^{\frac{2\al}{\al+1-\frac{1}{2p}}}\\
&\leq&C \left(\frac{\ln n}{n}\right)^{\frac{\al}{\al+1-\frac{1}{2p}}}\sum_{k=0}^{+\infty}2^{-k+\frac{\al(k+1)}{1+\al-\frac{1}{2p}}}\\&\leq& C\left(\frac{\ln n}{n}\right)^{\frac{\al}{\al+1-\frac{1}{2p}}}.
\end{eqnarray*}
Now, using (\ref{CS}), (\ref{L1}) and (\ref{inclusion})
%$$f\in\L_{\infty}\Rightarrow \sup_k|\be_{j,k}|\leq \square 2^{-\frac{j}{2}},$$
we have when $\la=(j,k)\not\in\Gamma_n$,
\begin{eqnarray*}
\sum_k\be_{j,k}^2&\leq &C 2^{-j\left(\al+\frac{1}{2}-\frac{1}{p}\right)}\left(\sum_k|\be_{j,k}|(\sup_k|\be_{j,k}|)^{r-1}\right)^{\frac{1}{r}}\\&\leq &C 2^{-j\left(\al+\frac{1}{2}-\frac{1}{p}\right)}\left(\sum_k|\be_{j,k}|2^{-\frac{j(r-1)}{2}}\right)^{\frac{1}{r}}\\
&\leq &C 2^{-j\al}.\\
\end{eqnarray*}
and applying Theorem~\ref{inegoraclelavraie}, we obtain for $c\geq 1$,
$$\sum_{\la\not\in\Gamma_n}  \be_\la^2\leq  C   \left(\frac{\ln
n}{n}\right)^{\frac{\al}{\al+1-\frac{1}{2p}}}$$
and
$$\E(\|\tilde    f_{n,\gamma}-f\|_{\tilde\p}^2)\leq  C   \left(\frac{\ln
n}{n}\right)^{\frac{\al}{\al+1-\frac{1}{2p}}}.$$
%%%%%%%%%%%%%%%%%%%%%%%%%%%%%%%%%%%%%%%%%%%
\subsection{Proof of Theorem \ref{minimaxsurmaxiset}}
Let us consider the Haar basis. For $j\geq 0$ and $D\in\{0,1,\dots,2^j\}$, we set
$$\mathcal{C}_{j,D}=\{f_m=\rho\indic_{[0,1]}+a_{j,D} \sum_{k\in
m}\tp_{\jk}:\quad |m|=D, m \subset \mathcal{N}_j\},$$
where $$\mathcal{N}_j=\{k:\quad \tp_{\jk} \mbox{ has support in } [0,1]\}.$$
The parameters $j, D, \rho ,a_{j,D}$ is chosen later to fulfill some requirements. Note that $$N_j=\mbox{card}(\mathcal{N}_j)=2^j.$$
We know that there exists a
subset of $\mathcal{C}_{j,D}$, denoted $\mathcal{M}_{j,D}$, and some universal
constants, denoted $\theta' $ and $\sigma$, such that for all
$m, m'\in \mathcal{M}_{j,D},$ $$\mbox{card}(m\Delta m')\geq \theta' D,\quad
\ln(\mbox{card}(\mathcal{M}_{j,D}))\geq \sigma D
\ln\left(\frac{2^j}{D}\right)$$ (see Lemma 8 of \cite{ptrfpois}).
 Now, let us describe all the requirements necessary to obtain the lower bound
of the risk. 
\begin{itemize}
\item To ensure $f_m\geq 0$ and the equivalence between the Kullback distance and the
$\L_{2}$-norm (see below), the $f_m$'s have to be larger than $\rho/2$. 
Since the $\tp_{\jk}$'s have disjoint support, this means that
\begin{equation}\label{cont1}
\rho \geq 2^{1+j/2} |a_{j,D}|.
\end{equation} 
\item  We need  the  $f_m$'s  to be  in  $\L_1(R'')\cap \L_\infty(R'')$.  Since
$\norm{f}_1=\rho$ and $\norm{f}_{\infty}=\rho+2^{j/2}|a_{j,D}|$, we need
\begin{equation}\label{cont0}
\rho+2^{j/2}|a_{j,D}|\leq R''.
\end{equation}
\item The $f_m$'s have to belong to $ {\cal
B}^{\frac{\al}{1+2\al}}_{2,\infty}(R')$ i.e. 
\begin{equation}\label{cont2}
\rho+2^{j\alpha/(1+2\alpha)}\sqrt{D}|a_{j,D}| \leq R'.
\end{equation}
\item  The  $f_m$'s   have  to  belong  to  $  W_\al(R)$.   We  have   $\si_\la^2=\rho$. Hence for any $t>0$
$$\rho^2\indic_{\rho\leq \sqrt{\rho} t}+D
a_{j,D}^2\indic_{|a_{j,D}|\leq \sqrt{ \rho} t} \leq  R^{2 /(1+2\alpha)} t^{4\al/(1+2\al)}.$$
If $|a_{j,D}|\leq \rho$, then it is enough to have
\begin{equation}\label{cont3}
\rho^2+Da_{j,D}^2 \leq R^{2 /(1+2\alpha)}\rho^{2\al/(1+2\al)} 
\end{equation}
and
\begin{equation}\label{cont4}
Da_{j,D}^2 \leq R^{2 /(1+2\alpha)} \left(\frac{a_{j,D}^2}{\rho}\right)^{2\al/(1+2\al)}. 
\end{equation}
\end{itemize} 
If the parameters satisfy these equations, then 
$${\cal R}(W_\al(R)\cap{\cal B}^{\frac{\al}{1+2\al}}_{2,\infty}(R')\cap {\cal L}_{1,2,\infty}(R''))\geq {\cal R}(\mathcal{M}_{j,D}).$$
Moreover  if  for  any  estimator $\hat{f}$,  we  define  $\hat{f}'=\arg\inf_{g\in
\mathcal{M}_{j,D}} \normp{g-\hat{f}}$, then for $f\in\mathcal{M}_{j,D}$, $$\normp{f-\hat{f}'}\leq \normp{f-\hat{f}}+\normp{\hat{f}-\hat{f}'} \leq 2 \normp{f-\hat{f}}.$$ Hence,
$${\cal R}(\mathcal{M}_{j,D})\geq \frac14 \inf_{\hat{f}\in \mathcal{M}_{j,D}}\sup_{f\in \mathcal{M}_{j,D}}\E(\normp{f-\hat{f}}^2).$$
But for every $m\not = m'$, $\normp{f_m-f_{m'}}^2=\sum_{k\in m \Delta m'} a_{j,D}^2 \geq \theta'D a_{j,D}^2$. Hence,
$${\cal R}(\mathcal{M}_{j,D})\geq\frac14 \theta' D a_{j,D}^2 \inf_{\hat{f}\in \mathcal{M}_{j,D}}(1-\inf_{f\in \mathcal{M}_{j,D}}\P(\hat{f}=f)).$$
We now use Fano's Lemma of \cite{birFano},  and to do so we need to provide an
upper bound of the Kullback-Leibler distance between two points of $ \mathcal{M}_{j,D}$. 
But for every $m\not = m'$,
\begin{eqnarray*}
K(\P_{f_m'}, \P_{f_m}) & =& n\int_\R f_{m'} \left(\exp\left(\ln\frac{f_m}{f_{m'}}\right)-\ln\frac{f_m}{f_{m'}}-1\right) \\
&=& 
n\int_\R \left(f_m-f_{m'} - f_{m'}\ln \left(1+\frac{f_m-f_{m'}}{f_{m'}}\right)\right) \\
&\leq & n\int_\R \frac{(f_m-f_{m'})^2}{f_{m}} \\
&\leq& \frac2\rho n \norm{ f_m-f_{m'}}_2^2 \\
&\leq & 
\frac2\rho n  D a_{j,D}^2,
\end{eqnarray*}
since $\ln(1+x)\geq x/(1+x).$
So finally, following similar arguments  to those used by \cite{ptrfpois} (pages
148 and 149), Fano's lemma implies that there exists an absolute constant $c<1$ such that
$${\cal R}(\mathcal{M}_{j,D})\geq \frac{(1-c)\theta'}{4} D a_{j,D}^2$$
as soon as the mean Kullback Leibler distance is small enough, which is implied by 
\begin{equation}
\label{cont5}
\frac2\rho n  D a_{j,D}^2 \leq c \sigma D \ln(2^j/D).
\end{equation}
Let us take $j$ such that $2^j\leq n/\ln n \leq 2^{j+1}$ and with $D\leq 2^j$,
$$a_{j,D}^2=\frac{ \rho^2}{4n} \ln (2^j/D).$$
First note that (\ref{cont5}) is  automatically fulfilled as soon as $\rho\leq
2c\sigma$, that is true if $\rho$ an absolute constant small enough. Then
$$\rho+2^{j/2}|a_{j,D}| \leq \rho +2^{j/2}\sqrt{\frac{\rho^2\ln n}{4n}} \leq 1.5 \rho.$$
So,  if  $\rho$  is  an  absolute  constant  small  enough,  (\ref{cont0})  is
satisfied. Moreover 
$$2^{1+j/2}|a_{j,D}|\leq 2^{1+j/2} \sqrt{\frac{\rho^2\ln n}{4n}}\leq \rho.$$
This gives (\ref{cont1}).
Now, take an integer $D=D_n$ such that
$$D_n\sim_{n\to\infty} R^{2/(1+2\alpha)} \left(\frac{n}{\ln n}\right)^{1/(1+2\alpha)}.$$
For $n$ large enough, $D_n\leq 2^j$ and $D_n$ is feasible. We have for $R$ fixed,
$$a_{j,D_n}^2 \sim_{n\to\infty} c_\alpha \rho^2 \frac{\ln n}{n},$$
where $c_\al$ is a constant only depending on $\al$. Therefore,
$$
\rho+2^{j\alpha/(1+2\alpha)} \sqrt{D_n}|a_{j,D_n}|=\rho +\sqrt{c_\alpha} \rho  R^{1/(1+2\alpha)} +o_n(1).$$
Since $R^{1/(1+2\alpha)}\leq R'$ it is sufficient to take $\rho$ small enough but constant depending only on $\alpha$ to obtain (\ref{cont2}).
Moreover,
$$D_na_{j,D_n}^2 \sim_{n\to\infty}c_\alpha \rho^2 R^{2/(1+2\alpha)} \left(\frac{\ln n}{n}\right)^{2\alpha/(1+2\alpha)}.$$
Hence (\ref{cont3}) is equivalent to $\rho^2 < R^{2/(1+2\alpha)} \rho^{2\alpha/(1+2\alpha)}$. Since $R\geq 1$, this is true as soon as $\rho <1$.
Finally (\ref{cont4}) is equivalent, when $n$ tends to $+\infty$, to
$$c_\alpha \rho^2 \leq (c_\alpha \rho)^{2\alpha/(1+2\alpha)}.$$
Once again this is true for $\rho$ small enough depending on $\alpha$.
As we can choose $\rho$ not depending on $R,R',R''$, this concludes
the proof.\\ Corollary \ref{adaptivemini} is completely straight
forward once we notice that if $R'\geq R$ then for every $\al$, $R'\geq R^{\frac1{1+2\al}}$.
 
%%%%%%%%%%%%%%%%%%%%%%%%%%%%%%%%%%%%%%%%%%%
\subsection{Proof of Theorem \ref{lower}}
Let $\al>1$ and $n$ be fixed. We set $j$ a positive integer such that
$$\frac{n}{(\ln n)^\al}\leq 2^j <\frac{2n}{(\ln n)^\al}.$$ 
For all $k\in\{0,...,2^j-1\}$, we define $$N^+_\jk = \int_{k2^{-j}}^{(k+1/2)2^{-j}}dN,\quad N^-_\jk =
\int_{(k+1/2)2^{-j}}^{(k+1)2^{-j}}dN.$$ All these variables are iid random Poisson
variables of parameter $\mu_{n,j} = n2^{-j-1}.$
Moreover,
$$\hb_\jk = \frac{2^{j/2}}{n}(N^+_\jk-N^-_\jk) \mbox{ and }
\hat{V}_{(\jk),n} = \frac{2^{j}}{n^2}(N^+_\jk+N^-_\jk).$$
Hence,
$$\E(\normp{\tilde{f}_{n,\gamma}-f}^2)\geq \sum_{k=0}^{2^j-1}
\frac{2^j}{n^2}\E\left((N^+_\jk-N^-_\jk)^2 \indic_{|N^+_\jk-N^-_\jk|\geq
\sqrt{2\gamma\ln (n) (N^+_\jk+N^-_\jk)}+\ln(n) u_n}\right).$$
Denote by $v_{n,j}= \left(\sqrt{4\gamma\ln(n) \mu_{n,j}}+\ln(n) u_n\right)^2.$
Remark  that if $N^+_\jk=\mu_{n,j} + \frac{\sqrt{v_{n,j}}}{2} $ and $N^-_\jk= \mu_{n,j} -\frac{\sqrt{v_{n,j}}}{2}$, then $$|N^+_\jk-N^-_\jk|= \sqrt{2\gamma\ln(n) (N^+_\jk+N^-_\jk)}+\ln(n) u_n.$$ 
Let $N^+$ and $N^-$ be two independent Poisson variables of parameter
$\mu_{n,j}$. Then,
$$\E(\normp{\tilde{f}_{n,\gamma}-f}^2)\geq 
\frac{2^{2j}}{n^2}v_{n,j}
\P\left(N^+=\mu_{n,j} +\frac{\sqrt{v_{n,j}}}{2}\mbox{ and }N^-= \mu_{n,j} - \frac{\sqrt{v_{n,j}}}{2}\right).
$$
Note that $$\frac{1}{4}(\ln n)^\al<\mu_{n,j} \leq \frac{1}{2}(\ln n)^\al,$$ and $$\lim_{n\to +\infty}\frac{\sqrt{v_{n,j}}}{\mu_{n,j}}=0.$$  So, $l_{n,j}=\mu_{n,j}+\frac{\sqrt{v_{n,j}}}{2}$ and $m_{n,j}=\mu_{n,j}-\frac{\sqrt{v_{n,j}}}{2}$ go to
$+\infty$ with $n$.
Hence by Stirling formula, 
\begin{eqnarray*}
\E(\normp{\tilde{f}_{n,\gamma}-f}^2)&\geq& \frac{v_{n,j}}{(\ln n)^{2\al}}
\P\left(N^+=\mu_{n,j}+\frac{\sqrt{v_{n,j}}}{2}\right)\P\left(N^-=\mu_{n,j}-\frac{\sqrt{v_{n,j}}}{2}\right)\\
&\geq & \frac{v_{n,j}}{(\ln
n)^{2\al}}\frac{\mu_{n,j}^{l_{n,j}}}{l_{n,j}!}e^{-\mu_{n,j}}\frac{\mu_{n,j}^{m_{n,j}}}{m_{n,j}!}e^{-\mu_{n,j}}\\
&\geq &\frac{4\gamma\mu_{n,j}}{(\ln
n)^{2\al-1}}\left(\frac{\mu_{n,j}}{l_{n,j}}\right)^{l_{n,j}}
e^{-(\mu_{n,j}-l_{n,j})}\left(\frac{\mu_{n,j}}{m_{n,j}}\right)^{m_{n,j}}
e^{-(\mu_{n,j}-m_{n,j})}\frac{(1+o_n(1))}{2\pi\sqrt{l_{n,j}m_{n,j}}}\\
&\geq & \frac{2\gamma}{\pi(\ln
n)^{2\al-1}}e^{-\mu_{n,j} \left[h\left(\frac{\sqrt{v_{n,j}} }{2\mu_{n,j}}\right)+h\left(-\frac{\sqrt{v_{n,j}} }{2\mu_{n,j}}\right)\right]}(1+o_n(1))
\end{eqnarray*}
where $h(x)=(1+x)\ln(1+x)-x =x^2/2+O(x^3)$.
So, 
$$\E(\normp{\tilde{f}_{n,\gamma}-f}^2)\geq \frac{2\gamma}{\pi(\ln
n)^{2\al-1}}e^{- \frac{v_{n,j}}{4\mu_{n,j}}+O_n\left(\frac{v_{n,j}^{3/2}}{\mu_{n,j}^2}\right)}(1+o_n(1)).$$
Since
$$v_{n,j}=4\gamma\ln(n)\mu_{n,j}(1+o_n(1)),$$
we obtain
$$\E(\normp{\tilde{f}_{n,\gamma}-f}^2)\geq \frac{2\gamma}{\pi(\ln
n)^{2\al-1}}e^{- \gamma\ln(n)+o_n(\ln(n))}(1+o_n(1)).$$
Finally, for every $\e>0$,
$$\E(\normp{\tilde{f}_{n,\gamma}-f}^2)\geq \frac{1}{n^{\gamma+\e}}(1+o_n(1)).$$
%%%%%%%%%%%%%%%%%%%%%%%%%%%%%%%%%%%%%%%%%%%
\subsection{Proof of Theorem \ref{classFn}}
We use notations of Lemma \ref{ecra}. Let $f\in {\cal F}_n$. We apply (\ref{init}) with $\e=1.4$. Then, with $\ga=1+\sqrt{2}$, and $\delta>0$ such that $(1+\delta)^2=11.8/(2\ga\times(1+2/\e))\simeq 1.006$, (\ref{init}) becomes
\begin{multline*}
\E(\normp{\tilde{f}_{n,\gamma}-f}^2)\leq \\\inf_{m\subset \Ga_n} \left\{6\sum_{\la\not\in m} \be_\la^2+\sum_{\la\in m} [3.4 + 11.8\ln n] V_{\la,n} + c(\delta,\ga)(1+2\e^{-1})\sum_{\la\in m}\left(\frac{\ln n \norm{\p_\la}_\infty}{n}\right)^2\right\}\\ 
+ \frac{C_2(\gamma,\|f\|_1, c,c', \p)}{n}.
\end{multline*}   Now,   take   $$m=\{\la\in   \Ga_n:\quad
\be_\la^2>V_{\la,n}\}.$$
If $m$ is empty, then $\be_\la^2=\min(\be_\la^2,V_{\la,n})$ for every
$\la$ of $\Ga_n$. Hence
$$ \E(\normp{\tilde{f}_{n,\gamma}-f}^2)\leq 6\sum_{\la\in \Ga_n}
\be_\la^2 + \frac{C_2(\gamma,\|f\|_1, c,c', \p)}{n}.$$
The result is true for $n$ large enough even if the $\be_\la$'s are
all zero and this explains the presence
of $1/n$ in the oracle ratio.\\
If $m$ is not empty,
note $\la=(j,k)$. Since $F_\la  \leq 2^{-j} \norm{f}_\infty$,  if $F_\la\not=0$, then
$2^j=O(n/\ln   n)$   and   $\la    \in\Ga_n$. Since
$$|\be_\la|\leq S_\p2^{j/2}F_\la,$$
this implies that $F_\la$ is non zero for all $\la \in m$, and that if $\be_\la\not=0$ then  $\la\in\Gamma_n$. Now,
$$V_{\la,n} =\frac{1}{n}\sigma_\la^2 \geq \frac{1}{n} 2^{j} I^2_\p F_\la \geq	 \frac{1}{n\Theta_\p} \norm{\p_\la}_\infty^2 F_\la.$$
Hence,  for  all  $n$,  if  $\la\in  m$,  $$V_{\la,n}  \ln  n  \geq  \frac{(\ln  n
)^{2}(\ln\ln n)}{\Theta_\p n^2} \norm{\p_\la}_\infty^2 $$
and if $n$ is large enough, $$0.2\ln n \sum_{\la \in m} V_{\la,n} \geq  c(\delta,\ga)(1+2\e^{-1})\sum_{\la \in m} \left(\frac{\ln n }{n}\right)^2 \norm{\p_\la}_\infty^2+3.4\sum_{\la \in m} V_{\la,n}.$$
%Since $F_\la \leq M_\p 2^{-j} \norm{f}_\infty$, if $F_\la$ is non zero then $2^j<< n/\ln n$ and $\la \in\Ga_n$. Moreover if $\mathcal{N}$ is the number of non zero $F_\la$, $\mathcal{N}\ln n /n <<\sum F_\la \leq m_{\p,n} \ln n \|f\|_1$, hence $\mathcal{N}<<n$. \com{pas besoin pour l'instant mais peut-etre utile en mino}
%Now $m=\{\la\in \Ga_n, \be_\la^2>V_{\la,n}\}$ this implies that $F_\la$ is non zero for all $\la \in m$,which implies that
%$$V_{\la,n} =\frac{1}{n}\sigma_\la^2 \geq \frac{1}{n} 2^{j} I^2(\p) F_\la = \frac{1}{n\theta} \norm{\p_\la}_\infty^2 F_\la $$
%Hence for all $n\geq n_0$, $$V_{\la,n} \ln n > B  \left(\frac{\ln n }{n}\right)^2 \norm{\p_\la}_\infty^2 $$
%and $$\sum_{\la \in m} V_{\la,n} \ln n >> \sum_{\la \in m} \left(\frac{\ln n }{n}\right)^2 \norm{\p_\la}_\infty^2.$$
%\com{le $1/n$ c'est pour au moins un coeff, ca garantit que le reste$ /n$ et plus petit que la parenthese}
%%%%%%%%%%%%%%%%%%%%%%%%%%%%%%%%%%%%%%%%%%%
\subsection{Proof of Theorem \ref{uppth}}
Before proving Theorem \ref{uppth}, let us state the following result. 
\begin{Prop}\label{gagrand}
Let  $\ga_{\min}\in(1,\gamma)$  be  fixed  and  let $\eta_{\la, \ga_{\min}}$  be  the  threshold
associated with $\ga_{\min}$:
 $$\eta_{\la, \ga_{\min}}=\sqrt{2\gamma_{\min}\ln n \tilde{V}_{\la,n} }+\frac{\gamma_{\min}\ln n}{3n}\norm{\p_\la}_\infty,$$
where    $$\tilde{V}_{\la,n}=\hat{V}_{\la,n}+\sqrt{2\gamma_{\min}     \ln    n    \hat{V}_{\la,n}
\frac{\norm{\p_\la}_\infty^2}{n^2}}+3                \gamma_{\min}                \ln
n\frac{\norm{\p_\la}_\infty^2}{n^2}$$
(see Theorem  \ref{inegoracle}). Let $u=(u_n)_n$ be some  sequence of positive
numbers and$$\La_u=\{\la \mbox{ such that }\P(\eta_{\la,\gamma} >|\be_\la|+\eta_{\la,\gamma_{\min}})\geq 1 -
u_n\}.$$
Then
$$\E(\normp{\tilde{f}_{n,\gamma}-f}^2)\geq \left(\sum_{\la \in
\La_u}\be_\la^2\right) (1-(3n^{-\ga_{\min}}+u_n)).$$
\end{Prop}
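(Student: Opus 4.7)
The intuition is that $\La_u$ collects the indices at which, with high probability, the threshold $\eta_{\la,\ga}$ is strictly larger than $|\be_\la|+\eta_{\la,\ga_{\min}}$, so the thresholding procedure kills the coefficient $\hb_\la$ and each such $\la$ contributes the full $\be_\la^2$ to the $\L_2$-error. Since the proposition is a lower bound, I just need to find \emph{one} event on which $\tb_\la=0$ and control its probability.

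Start from the coefficient-wise identity
$$\normp{\tilde f_{n,\ga}-f}^2=\sum_{\la\in\Ga_n}(\tb_\la-\be_\la)^2+\sum_{\la\notin\Ga_n}\be_\la^2.$$
Because $\tb_\la=\hb_\la\indic_{|\hb_\la|\geq\eta_{\la,\ga}}$, on the event $\{|\hb_\la|<\eta_{\la,\ga}\}$ one has $\tb_\la=0$, hence $(\tb_\la-\be_\la)^2=\be_\la^2$. Thus for every $\la\in\La_u\cap\Ga_n$,
$$\E[(\tb_\la-\be_\la)^2]\geq\be_\la^2\,\P(|\hb_\la|<\eta_{\la,\ga}),$$
and for $\la\in\La_u\setminus\Ga_n$ the term $\be_\la^2$ appears deterministically. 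So the whole proof reduces to establishing the uniform lower bound $\P(|\hb_\la|<\eta_{\la,\ga})\geq 1-u_n-3n^{-\ga_{\min}}$ for every $\la\in\La_u$.

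To get this lower bound, introduce the two events
$$A_\la=\{\eta_{\la,\ga}>|\be_\la|+\eta_{\la,\ga_{\min}}\},\qquad B_\la=\{|\hb_\la-\be_\la|\leq\eta_{\la,\ga_{\min}}\}.$$
The definition of $\La_u$ gives $\P(A_\la)\geq 1-u_n$, and on $A_\la\cap B_\la$ the triangle inequality yields $|\hb_\la|\leq|\be_\la|+|\hb_\la-\be_\la|\leq|\be_\la|+\eta_{\la,\ga_{\min}}<\eta_{\la,\ga}$, so $\tb_\la=0$. A union bound then gives $\P(|\hb_\la|<\eta_{\la,\ga})\geq 1-u_n-\P(B_\la^c)$.

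The only real work is the bound $\P(B_\la^c)\leq 3n^{-\ga_{\min}}$, which is a direct rerun of the $LDLM$ computation from the proof of Theorem \ref{inegoracle}. Recalling that $\tilde V_{\la,n}=\tilde V_{\la,n}(\ga_{\min}\ln n)$ in the notation of Lemma \ref{toutesdev}, I would split on $\{V_{\la,n}<\tilde V_{\la,n}\}$ so that $\eta_{\la,\ga_{\min}}\geq\sqrt{2\ga_{\min}\ln n\,V_{\la,n}}+\ga_{\min}\ln(n)\norm{\p_\la}_\infty/(3n)$ on that event, and then apply (\ref{surv1}) to the complement and (\ref{surhb1}) (both with $u=\ga_{\min}\ln n$) to the main event, yielding $n^{-\ga_{\min}}+2n^{-\ga_{\min}}=3n^{-\ga_{\min}}$. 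Summing $\be_\la^2(1-u_n-3n^{-\ga_{\min}})$ over $\la\in\La_u$ gives the claim. The main (but small) obstacle is this probabilistic step, i.e.\ recognizing that $\eta_{\la,\ga_{\min}}$ is precisely calibrated so that Lemma \ref{toutesdev} applied with $u=\ga_{\min}\ln n$ produces the exact constant $3n^{-\ga_{\min}}$; everything else is the triangle inequality.
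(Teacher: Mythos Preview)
Your proof is correct and follows essentially the same route as the paper: restrict to $\La_u$, use the triangle inequality to find an event on which $|\hb_\la|<\eta_{\la,\gamma}$, apply a union bound to the two events $A_\la$ and $B_\la$, and invoke Lemma~\ref{toutesdev} (splitting on $\{V_{\la,n}<\tilde V_{\la,n}\}$) to get $\P(B_\la^c)\leq 3n^{-\ga_{\min}}$. Your treatment is in fact slightly more careful than the paper's in that you explicitly handle indices $\la\in\La_u\setminus\Ga_n$, for which the contribution $\be_\la^2$ is deterministic.
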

\begin{proof}
\begin{eqnarray*}
\E(\normp{\tilde{f}_{n,\gamma}-f}^2) &\geq & \sum_{\la \in
\La_u} \E\left((\hb_\la-\be_\la)^2\indic_{|\hb_\la|\geq \eta_{\la,\gamma}}+\be_\la^2 \indic_{|\hb_\la|<\eta_{\la,\gamma}}\right).\\
&\geq &\sum_{\la \in
\La_u}\be_\la^2 \P( |\hb_\la|<\eta_{\la,\gamma})\\
&\geq & \sum_{\la \in
\La_u}\be_\la^2 \P(|\hb_\la-\be_\la|+|\be_\la|<\eta_{\la,\gamma})\\
&\geq & \sum_{\la \in
\La_u}\be_\la^2 \P(|\hb_\la-\be_\la|<\eta_{\la,\gamma_{\min}} \mbox{ and }
\eta_{\la,\gamma_{\min}}+|\be_\la|<\eta_{\la,\gamma})\\
&\geq & \sum_{\la \in
\La_u}\be_\la^2
\left(1-\left(\P(|\hb_\la-\be_\la|\geq\eta_{\la,\gamma_{\min}})+\P(\eta_{\la,\gamma_{\min}}+|\be_\la|\geq\eta_{\la,\gamma})\right)\right)\\
&\geq &\left(\sum_{\la \in
\La_u}\be_\la^2\right) (1-(3n^{-\ga_{\min}}+u_n)),
\end{eqnarray*}
by applying Lemma \ref{toutesdev}. 
\end{proof}
Using this proposition, we give the proof of Theorem \ref{uppth}.
Let us consider 
$$f=\indic_{[0,1]} +\sum_{k\in\mathcal{N}_j}\sqrt{\frac{2(\sqrt{\ga}-\sqrt{\ga_{\min}})^2\ln n}{n}} \tp_{\jk},$$
with $$ \mathcal{N}_j=\{0,1,\dots,2^j-1\}$$
and   $$\frac{n}{(\ln  n)^{1+\al}}<2^j\leq   \frac{2n}{(\ln  n)^{1+\al}},\quad
\al>0.$$
Note that for any $(j,k)$, if $F_{j,k}\not=0$, then $F_{j,k}=2^{-j}\geq \frac{(\ln n)(\ln\ln
n)}{n}$ for $n$ large enough and $f$ belongs to ${\cal F}_n$.  Furthermore,
$V_{(-1,0),n}=\frac{1}{n}$ and for any
$k\in \mathcal{N}_j$, $V_{(j,k),n}=\frac{1}{n}$. So, for $n$ large enough,
$$\sum_{\la \in \Ga_n}\min( \beta_\la^2, V_{\la,n})=V_{(-1,0),n}+\sum_{k\in\mathcal{N}_j}V_{(j,k),n}=\frac{1}{n}+\sum_{k\in\mathcal{N}_j}\frac{1}{n}.$$
Now,  to   apply  Proposition   \ref{gagrand},  let  us   set  for   any  $n$,
$u_n=n^{-\ga}$ and observe that for any $\e>0$,
$$
\P(\eta_{\la,\gamma_{\min}}+|\be_\la|\geq \eta_{\la,\gamma})\leq \P((1+\e)2\ga_{\min}\ln
n \tilde{V}_{\la,n}(\ga_{\min}) + (1+\e^{-1}) \be_\la^2 > 2\ga\ln n
\tilde{V}_{\la,n}(\ga)),
$$
since $\ga_{\min}<\ga.$
With $\e = \sqrt{\ga/\ga_{\min}}-1$ and $\theta=\sqrt{\ga_{\min}/\ga}$, 
\begin{multline*}\P((1+\e)2\ga_{\min}\ln
n \tilde{V}_{\la,n}(\ga_{\min}) + (1+\e^{-1}) \be_\la^2 > 2\ga\ln n
\tilde{V}_{\la,n}(\ga))= \\\P(\theta \tilde{V}_{\la,n}(\ga_{\min})+(1-\theta) V_{\la,n} > \tilde{V}_{\la,n}(\ga)).\end{multline*}
Since $ \tilde{V}_{\la,n}(\ga_{\min})<\tilde{V}_{\la,n}(\ga)$, 
$$\P(\eta_{\la,\gamma_{\min}}+|\be_\la|\geq \eta_{\la,\gamma})\leq \P(V_{\la,n} > \tilde{V}_{\la,n}(\ga)) \leq u_n.$$
So, 
$$\{(j,k):\quad k\in\mathcal{N}_j\}\subset \La_u,$$
and 
\begin{eqnarray*}
\E(\normp{\tilde{f}_{n,\gamma}-f}^2)&\geq&\sum_{
k\in\mathcal{N}_j}\be_{j,k}^2(1-(3n^{-\ga_{\min}}+n^{-\ga}))\\
&\geq&(\sqrt{\ga}-\sqrt{\ga_{\min}})^2                                      2\ln
n\sum_{k\in\mathcal{N}_j}\frac{1}{n}(1-(3n^{-\ga_{\min}}+n^{-\ga}))\\
&\geq&(\sqrt{\ga}-\sqrt{\ga_{\min}})^2 2\ln n\left(\sum_{\la \in \Ga_n}\min( \beta_\la^2, V_{\la,n})-\frac{1}{n}\right)(1-(3n^{-\ga_{\min}}+n^{-\ga})).
\end{eqnarray*}
Finally, since $\mbox{card}(\mathcal{N}_j)\to +\infty$ when $n\to +\infty$,
$$\frac{\E(\normp{\tilde{f}_{n,\gamma}-f}^2)}{\sum_{\la \in \Ga_n}
\min( \beta_\la^2, V_{\la,n})+\frac{1}{n}}\geq (\sqrt{\ga}-\sqrt{\ga_{\min}})^2 2\ln n(1+o_n(1)).$$
\hfill
  $\blacksquare$

\noindent{\bf\Large Appendix}

\noindent The following table gives the definition of the signals used in Section \ref{simu}.

\vspace{1cm}
{\scriptsize
\hspace{-0.8cm}
\begin{tabular}{|c|c|c|}
\hline 
&&\\
Haar1 & Haar2 &  Blocks \\ 
&&\\
 $\displaystyle{\bf 1}_{[0,1]}$ & $\displaystyle  1.5~{\bf 1}_{[0,0.125]}+0.5~{\bf 1}_{[0.125,0.25]}+{\bf 1}_{[0.25,1]}$ & $\displaystyle\left(2+\sum_j \frac{h_j}2\left(1+\mbox{sgn}(x-p_j)\right)\right)\frac{{\bf 1}_{[0,1]}}{3.551}$ \\ 
&&\\
\hline
&&\\
Comb  & Gauss1 & Gauss2  \\ 
&&\\
$\displaystyle 32\sum_{k=1}^{+\infty}\frac1{k2^k}{\bf
  1}_{[k^2/32,(k^2+k)/32]}$& $\displaystyle\frac{1}{0.25\sqrt{2\pi}}
\exp\left(\frac{(x-0.5)^2}{2\times 0.25^2}\right)$ &$\displaystyle
\frac{1}{\sqrt{2\pi}}
\exp\left({\frac{(x-0.5)^2}{2\times 0.25^2}}\right)+\frac{3}{\sqrt{2\pi}}
\exp\left({\frac{(x-5)^2}{2\times 0.25^2}}\right)$\\
&&\\
\hline 
&&\\
Beta0.5 & Beta4 & Bumps\\
&&\\
$\displaystyle0.5x^{-0.5}{\bf 1}_{]0,1]}$ & $\displaystyle 3x^4{\bf 1}_{[1,+\infty[}$ & $\displaystyle\left(\sum_jg_j\left(1+\frac{|x-p_j|}{w_j}\right)^{-4}\right)\frac{{\bf 1}_{[0,1]}}{0.284}$\\
&&\\
\hline
\end{tabular}}
\vspace{0.3cm}

where

$ 
\begin{tabular}{ccccccccccccccc}
  p & =  & [ & 0.1 & 0.13 & 0.15 &0.23 &0.25& 0.4  & 0.44 & 0.65 & 0.76 &0.78  & 0.81 &  ]  \\ 
  h & =  & [  & 4 & -5 &3  & -4 & 5 &-4.2  &2.1  & 4.3 & -3.1 & 2.1 & -4.2 & ]  \\ 
 g  & = & [ & 4  & 5 & 3 & 4 & 5 & 4.2 & 2.1 & 4.3  &3.1  & 5.1  & 4.2  & ] \\ 
  w&=  &[  &0.005  & 0.005 &0.006  &0.01  &0.01  &0.03  &0.01  &0.01  &0.005  &0.008  &0.005  & ] 
\end{tabular} 
$
\\\\

\noindent{\bf Acknowledgment.} The authors  acknowledge the support of the  French Agence Nationale
de la Recherche  (ANR), under grant ATLAS (JCJC06\_137446) ''From Applications
to Theory in Learning and Adaptive Statistics''. We would like to warmly thank Lucien Birg\'e
for his advises and his encouragements. 
%%%%%%%%%%%%%%%%%%%%%%%%%%%%%%%%%%%%%%%%%%%
%%%%%%%%%%%%%%%%%%%%%%%%%%%%%%%%%%%%%%%%%%%
\bibliographystyle{plain}

\end{document}